\newtheorem{theorem}{Theorem}[section]
\newtheorem{proposition}[theorem]{Proposition}
\newtheorem{corollary}[theorem]{Corollary}
\theoremstyle{definition}
\newtheorem{definition}[theorem]{Definition}
\theoremstyle{remark}
\newtheorem{remark}[theorem]{Remark}
\numberwithin{equation}{section}
\begin{document}
\setcounter{page}{1}

\title[Algebraic orthogonality and commuting projections]{Algebraic orthogonality and commuting projections   
in operator algebras}

\author[A. K. Karn]{Anil Kumar Karn}

\address{School of Mathematical Science, National Institute of Science Education and 
Research, HBNI, Bhubaneswar, At $\&$ Post - Jatni, PIN - 752050, India.}
\email{\textcolor[rgb]{0.00,0.00,0.84}{anilkarn@niser.ac.in; anil.karn@gmail.com}}

\subjclass[2010]{Primary 46B40; Secondary 46L05, 46L30.}

\keywords{Absolute $\infty$-orthogonality, absolute order unit space, absolute compatibility, order projection.}

\begin{abstract} 
We provide an order-theoretic characterization of algebraic 
orthogonality among positive elements of a general C$^{\ast}$-algebra 
by proving a statement conjectured in \cite{AK2}. Generalizing this
idea, we describe absolutely ordered $p$-normed spaces, for 
$1 \le p \le \infty$ which present a model for ``non-commutative 
vector lattices''. Thid notion includes order theoretic orthogonality. We 
generalize algebraic orthogonality by introducing the notion of 
{\it absolute compatibility} among positive elements in absolute order 
unit spaces and relate it to symmetrized product in the case of a 
C$^{\ast}$-algebra. In the latter case, whenever one of the elements is 
a projection, the elements are absolutely compatible if and only if they 
commute. We develop an order theoretic prototype of the results. For 
this purpose, we introduce the notion of {\it order projections} and 
extend the results related to projections in a unital C$^{\ast}$-algebra 
to order projections in an absolute order unit space. As an application, 
we describe spectral decomposition theory for elements of an absolute 
order unit space.
\end{abstract} 

\maketitle

\section{Introduction}
The order structure of a C$^{\ast}$-algebra has been a point of 
attraction since the inception of the theory. Kakutani's characterization 
of $C(K)$ spaces ($K$ a compact, Hausdorff space) as $AM$- spaces 
\cite{SK} highlighted that the self-adjoint part of a commutative 
C$^{\ast}$-algebra is a Banach lattice (with some additional norm 
conditions). However, in a non-commutative C$^{\ast}$-algebra, 
join and meet of two general self-adjoint elements may not even exist. 
Thus it was natural to turn attention towards the non-commutative 
case. In 1951, Kadison proved that the self-adjoint part of a unital 
C$^{\ast}$-algebra  is an order unit space \cite{RVK}. (However, this 
was not a characterization as the converse is not true.) Its non-unital 
version was studied by Ng \cite{KFN}. (Also see \cite{LA}.) The 
non-self-adjoint version of Kadison's work was introduced by Choi and 
Effros as matrix order unit spaces \cite{CE}  whose non-unital version 
was presented by Karn and Vasudevan \cite{KV}.

The author carried forward the work further in this direction with an 
intuition that it may be possible to prove a non-commutative version 
of Kakutani's theorem. He characterized the (matrix) ordered normed 
spaces that can be order embedded in C$^{\ast}$-algebras \cite{AK10} 
and introduced the notion of order smooth $\infty$-normed spaces 
(order smooth $p$-normed spaces in general, for $1 \le p \le \infty$) 
\cite{AK}. On such spaces, he studied a notion pf 
$\infty$-orthogonality ($p$-orthogonality in general, for $1\le p \le 
\infty$) \cite{AK1}. In a subsequent paper, he characterized algebraic 
orthogonality in some classes of C$^{\ast}$-algebras (that include 
commutative C$^{\ast}$-algebras as well as von Neumann algebras) in 
terms of absolute $\infty$-orthogonality (defined for order smooth 
$\infty$-normed spaces) \cite{AK2}. In this paper, we extend it to an 
arbitrary C$^{\ast}$-algebra, thus proving Conjecture 4.4 of \cite{AK2}.

Following the above said characterization, the author introduced the 
notion of an absolute order smooth $p$-normed space ($1 \le p \le 
\infty$). Thus the examples of an absolute order smooth 
$\infty$-normed space include the self-adjoint part of an arbitrary 
C$^{\ast}$-algebra. It is important to note that an absolute order 
smooth $p$-normed space exhibit a ``vector lattice like'' structure. 
More precisely, this structure can be characterized as a vector lattice 
under an extra condition \cite{AK2}. In this paper, we shall present a 
simplified version of this theory to propose a model of a 
``non-commutative'' vector-lattice theory. 

Algebraic orthogonal (or equivalently, absolutely orthogonal) pair of 
positive elements in a C$^{\ast}$-algebra are by default commutative.  
In this paper, we observe that absolutely orthogonal pair of (positive) 
elements inherit another order theoretic relation which we term as 
absolute compatibility. We show that for a pair of absolutely 
compatible positive elements in a C$^{\ast}$-algebra, their 
symmetrized product may be described order theoretically. More 
precisely, we show that in a C$^{\ast}$-algebra $A$,  $a$ and $b$ are 
mutually absolutely compatible positive elements of $A$ if and 
only if $\alpha (a \dot\wedge b) = a \circ b$ where $\alpha = \max \{ 
\Vert a \Vert, \Vert b \Vert \}$. (Notions are defined later.) In particular, 
if one of the elements is a projection, then these elements are 
absolutely compatible if and only if they commute. These observations 
indicate that absolute compatibility may be explored as a possible tool 
to understand commutativity in operator algebras.

In this paper, we develop an order theoretic prototype of these results. 
For this purpose, we introduce the notion of {\it order projections} 
generalizing projections and extend some of the results related to 
projections in unital C$^{\ast}$-algebras to order projections in 
absolute order unit spaces. Order projections bear similarity with the 
notion `projective units' (defined in  order unit spaces) studied in 
\cite{AS} and also with the notion `projections' (again defined in order 
unit spaces) studied in \cite{PB, AM}. At the end of the paper, as an 
application, we describe a spectral decomposition theory in an 
absolute order unit space.

Now we propose the scheme of the paper. In Section 3, we describe 
absolutely ordered $p$-normed spaces, for $1 \le p \le \infty$ which 
presents a model for ``non-commutative'' vector lattices and includes 
order theoretic orthogonality. In section 4, we introduce {\it absolute 
compatibility} between positive elements in an absolute order 
unit space and relate this notion to symmetrized product in a unital 
C$^{\ast}$-algebra. In Section 5, we introduce {\it order projections} as 
a generalization of projections in operator algebras. We study absolute 
compatibility of an order projection first, with another order projection 
in Section 5, and then with general positive elements in Section 6. In 
Section 7, we discuss a spectral decomposition theory in an absolute 
order unit space.

\section{Orthogonality in C$^{\ast}$-algebras}

In \cite{AK2}, we proved that the algebraic orthogonality among 
positive elements is equivalent to absolutely $\infty$-orthogonality 
in a von Neumann algebra as well as in a commutative 
C$^{\ast}$-algebra. We begin the paper with proving the result for a general C$^{\ast}$-algebra conjectured in \cite{AK2}.
\begin{theorem}\cite[Conjecture 4.4]{AK2}\label{001}
	Let $A$ be a C$^{\ast}$-algebra. Then for $a, b \in A^+ \setminus\{ 0 \}$, we have $a b = 0$ if and only if 
	$\Vert \Vert c \Vert^{-1} c + \Vert d \Vert^{-1} d \Vert = 1$ whenever $0 < c \le a$ and $0 < d \le b$ in $A^+$.
\end{theorem}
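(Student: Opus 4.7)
Suppose $ab = 0$ and fix $0 < c \le a$, $0 < d \le b$. I would first show $cd = 0$: either by noting in $A^{**}$ that $\operatorname{supp}(c) \le \operatorname{supp}(a) \perp \operatorname{supp}(b) \ge \operatorname{supp}(d)$, or by a direct sandwich estimate starting from $\|a^{1/2} b^{1/2}\|^2 = \|a^{1/2} b a^{1/2}\| = 0$. With $cd = 0$, the positive elements $c, d$ commute and $C^*(c,d)$ is commutative, isomorphic to some $C_0(X)$; inside this subalgebra $\hat c$ and $\hat d$ are positive functions with disjoint supports each of supremum norm one, so $\|\hat c + \hat d\| = 1$.

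\textbf{Backward direction.} I argue the contrapositive: assuming $ab \ne 0$, I construct $c \le a$ and $d \le b$ in $A^+$ with $\|\hat c + \hat d\| > 1$. The guiding idea is to take $c, d$ close to scalar multiples of the support projections of $a, b$ via continuous functional calculus. Fix the continuous function $f_\epsilon : [0,\infty) \to [0,1]$ equal to $0$ on $[0,\epsilon]$, linear on $[\epsilon,2\epsilon]$, and $1$ on $[2\epsilon,\infty)$, and set $c_\epsilon = \epsilon f_\epsilon(a)$, $d_\epsilon = \epsilon f_\epsilon(b)$. Because $t - \epsilon f_\epsilon(t) \ge 0$ on $[0,\infty)$, we have $c_\epsilon \le a$ and $d_\epsilon \le b$; and for every $\epsilon$ with $2\epsilon < \min(\|a\|,\|b\|)$ one has $\|c_\epsilon\| = \|d_\epsilon\| = \epsilon$, so the normalisations are $\widehat{c_\epsilon} = f_\epsilon(a)$ and $\widehat{d_\epsilon} = f_\epsilon(b)$.

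To close the argument, I need $\|f_\epsilon(a) + f_\epsilon(b)\| > 1$ for some small $\epsilon$. I would pass to the enveloping von Neumann algebra $A^{**}$ and let $p, q$ be the support projections of $a, b$. Since $ab \ne 0$, $pq \ne 0$; and for two projections in a von Neumann algebra, $pq \ne 0$ is equivalent to $p + q \not\le 1$ (otherwise $q \le 1 - p$ would force $\operatorname{range}(q) \subseteq \ker p$). Hence $\|p + q\| > 1$, and some normal state $\phi$ on $A^{**}$ (equivalently, some state in $S(A)$) satisfies $\phi(p + q) > 1$. Since $f_\epsilon \to \chi_{(0,\infty)}$ pointwise and is uniformly bounded, bounded Borel functional calculus in the von Neumann algebra gives $f_\epsilon(a) \to p$ and $f_\epsilon(b) \to q$ weak-$*$ in $A^{**}$, so $\phi(f_\epsilon(a) + f_\epsilon(b)) \to \phi(p + q) > 1$. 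For $\epsilon$ small enough, $\|\widehat{c_\epsilon} + \widehat{d_\epsilon}\| = \|f_\epsilon(a) + f_\epsilon(b)\| \ge \phi(f_\epsilon(a) + f_\epsilon(b)) > 1$, contradicting the hypothesis.

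The principal obstacle is the backward direction: one must convert the abstract condition $ab \ne 0$ into a concrete quantitative witness, a pair $(c, d) \in A^+ \times A^+$ with $\|\hat c + \hat d\|$ strictly exceeding one. The bridge is the enveloping von Neumann algebra together with the spectral approximation $f_\epsilon(a) \to \operatorname{supp}(a)$; the remaining ingredients (the elementary projection identity $pq \ne 0 \iff p + q \not\le 1$ and the pointwise-limit step in functional calculus) are standard.
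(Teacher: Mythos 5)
Your proposal is correct, and its backward direction is essentially the same argument as the paper's (Peralta's) proof run in the contrapositive: both approximate the range projections $[a],[b]$ in $A^{\ast\ast}$ from below by functional-calculus elements dominated by $a$ and $b$ (your $\epsilon f_{\epsilon}(a)$ playing the role of the paper's truncations $a_n$, with normalizations converging weak* to $[a]$), and both ultimately rest on the fact that two projections $p,q$ satisfy $\Vert p+q \Vert \le 1$ exactly when $pq=0$. The only differences are in arrangement: the paper carries the norm-one hypothesis to the weak* limit via lower semicontinuity of the norm, first in $a$ and then by a dual argument in $b$, whereas you carry the strict inequality $\Vert [a]+[b] \Vert >1$ back to the approximants simultaneously in $a$ and $b$ by testing against a normal state, and you also write out the forward direction, which the paper delegates to Proposition \ref{8}.
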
 

Let us recall the following result.

\begin{proposition}\cite{AK2}\label{8}
Let $A$ be a $C^{\ast}$-algebra and let $a, b \in A^+ \setminus \{ 0 \}$. 
Consider the following statements:
 \begin{enumerate}
 \item[(1)] $a b = 0$, 
 \item[(2)] $0 < c \le a$ and $0 < d \le b$ imply $\big\Vert \Vert c 
 \Vert^{-1} c + \Vert d \Vert^{-1} d \big\Vert = 1$,
 \item[(3)] $0 \le c \le a$ and $c \le b$ imply $c = 0$. 
\end{enumerate}
Then $(1)$ implies $(2)$ and $(2)$ implies $(3)$. Further, if $a b = b 
a$, then $(3)$ implies $(1)$. 
\end{proposition}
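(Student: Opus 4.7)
The plan is to prove the three implications separately. For $(1) \Rightarrow (2)$, I would first establish the auxiliary fact that $ab = 0$ forces $cd = 0$ whenever $0 \le c \le a$ and $0 \le d \le b$. From $ab = 0$ one obtains $aba = 0$ at once, hence $\|b^{1/2}a\|^2 = \|(b^{1/2}a)^\ast(b^{1/2}a)\| = \|aba\| = 0$, so $b^{1/2}a = 0$ and therefore $b^{1/2}ab^{1/2} = 0$. Since $0 \le c \le a$, this yields $0 \le b^{1/2}cb^{1/2} \le b^{1/2}ab^{1/2} = 0$, so $\|c^{1/2}b^{1/2}\|^2 = \|b^{1/2}cb^{1/2}\| = 0$ and hence $cb = 0$. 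Reapplying the same reasoning with $(b,c)$ in place of $(a,b)$ gives $cd = 0$.

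With $cd = 0$ in hand, the $C^{\ast}$-subalgebra generated by the commuting positive elements $c$ and $d$ is isomorphic to a subalgebra of some $C_0(K)$, in which $c$ and $d$ correspond to disjointly supported nonnegative continuous functions. Consequently $\|c + d\| = \max\{\|c\|, \|d\|\}$, and normalising gives $\|\,\|c\|^{-1}c + \|d\|^{-1}d\,\| = 1$. For $(2) \Rightarrow (3)$ I would argue by contradiction: if $c \neq 0$ satisfies $0 < c \le a$ and $c \le b$, then taking $d := c$ in (2) produces $1 = \|\,\|c\|^{-1}c + \|c\|^{-1}c\,\| = 2$, absurd.

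For the final implication $(3) \Rightarrow (1)$ under the additional hypothesis $ab = ba$, the plan is to reduce to the commutative case. The commuting positive elements $a, b$ generate a commutative $C^{\ast}$-subalgebra $B \subseteq A$ with $B \cong C_0(K)$ for some locally compact Hausdorff $K$; under the Gelfand isomorphism $a, b$ correspond to nonnegative $f, g \in C_0(K)$. Their pointwise minimum $f \wedge g$ lies in $C_0(K)^+$ and pulls back to an element $c \in B^+ \subseteq A^+$ satisfying $c \le a$ and $c \le b$. Hypothesis (3) then forces $c = 0$, so $f \wedge g \equiv 0$; this means $f$ and $g$ have disjoint supports, hence $fg = 0$ and therefore $ab = 0$.

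The main obstacle is the first implication: extracting $cd = 0$ from $ab = 0$ for arbitrary subordinates. The support projections of $a$ and $b$ live only in $A^{\ast\ast}$, so a direct support-orthogonality argument requires leaving $A$; the compact workaround used above is to iterate the $C^{\ast}$-identity $\|x\|^2 = \|x^\ast x\|$ together with the monotonicity $0 \le c \le a \Rightarrow b^{1/2}cb^{1/2} \le b^{1/2}ab^{1/2}$, which keeps the entire argument inside $A$ itself. The other two implications are essentially formal once the correct test elements ($d = c$ in one case, $c = a \wedge b$ in the other) are chosen.
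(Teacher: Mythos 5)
Your proposal is correct. Note that the paper itself does not prove this proposition --- it only recalls it from \cite{AK2} --- so there is no in-paper argument to compare against; but every step of yours checks out: the iterated use of the C$^{\ast}$-identity $\Vert x \Vert^2 = \Vert x^{\ast}x \Vert$ together with the order-preserving map $x \mapsto b^{1/2}xb^{1/2}$ cleanly yields $cd=0$ inside $A$ (avoiding the bidual, which the paper does invoke for the harder Theorem 2.1), the choice $d=c$ settles $(2)\Rightarrow(3)$, and the Gelfand-transform construction of $c = a \dot\wedge b$ in the commutative subalgebra settles $(3)\Rightarrow(1)$. The only cosmetic imprecision is the phrase ``disjoint supports'' at the end: $f \wedge g \equiv 0$ gives only that at each point at least one of $f,g$ vanishes (the closed supports may touch), but that already forces $fg=0$, which is all you use.
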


\begin{proof} 
	
	[of Theorem \ref{001}] 
	( A. M. Peralta)
	It suffices to show that $a \perp_{\infty}^a b$ implies $a b = 0$. Further, without any loss of generality, we may assume that $\Vert a \Vert = 1 = 
	\Vert b \Vert$. 
	Let $C^{\ast}(a)$ be the C$^{\ast}$-subalgebra of $A$ generated by $a$. Then $C^{\ast}(a) \cong C(\sigma (a))$ where $\sigma (a)$ is the 
	spectrum of $a$. Since $\Vert a \Vert = 1$ we have $\sigma (a) \subset [0, 1]$. 
	For each $n \in \mathbb{N}$, we define $a_n: \sigma (a) \to \mathbb{C}$ as follows:  
	For $t \in \sigma (a)$, we set 
	\begin{eqnarray*}
		a_n (t) &=& t \quad \textrm{for}~ t \le \frac{1}{n} \\
		&=& \frac{1}{n} \quad \textrm{for}~ t \ge \frac{1}{n}.
	\end{eqnarray*}
	
	Then $a_n \in C(\sigma (a))$ with $a_n \le a$ for each $n$. By functional calculus, $a_n \in A^+$. Thus by assumption, $a_n \perp_{\infty} d$ 
	for any $d \in A^+$ with $d \le b$. 
	Also, $\Vert a_n \Vert = \frac{1}{n}$ so that $c_n := n a_n$ has norm one for each $n$ and that 
	$c_n \perp_{\infty} d$ for any $d \in A^+$ with $d \le b$. 
	Further we note that $c_n \to [a]$ in $A^{\ast\ast}$ in the weak*-topology where $[a]$ is the 
	range projection of a in $A^{\ast\ast}$.  
	As the norm in $A^{\ast\ast}$ is weak*-lower semi-continuous we have 
	$$\Vert [a] + \Vert d \Vert^{-1} d \Vert \le \lim_{w*} \Vert c_n + \Vert d \Vert^{-1} d \Vert = 1$$ 
	for $0 \le d \le b$. Now as $0 \le [a] \le [a] + \Vert d \Vert^{-1} d$ we may conclude that $\Vert [a] + \Vert d \Vert^{-1} d \Vert = 1$ we have 
	$[a] \perp_{\infty} d$ whenever $0 \le d \le b$. 
	Now by a dual argument, we further get that $[a] \perp_{\infty} [b]$. 
	As $[a]$ and $[b]$ are projections, we have 
	$[a] [b] = 0$ and consequently, $a b = 0$.
\end{proof}

\section{Orthogonality in ordered vector spaces}

In this section, we recall few immediate definitions and facts discussed in \cite{AK, AK1, AK2}. we shall present these concepts 
with a new orientation. This may be seen as a fresh start of the theory of {\it absolutely ordered spaces}. The first result is a 
simpler (and weaker) form of \cite[Theorem 4.11]{AK2}. We include a proof as the order structure is proved under weaker assumptions and with a different set of arguments.
\begin{theorem} \label{1}
Let $V$ be a real vector space. The following sets of conditions on $V$ are equivalent:
\begin{enumerate}
\item There exists a cone $V^+$ in $V$ and a mapping $\vert \cdot \vert : V \to V^+$ that satisfies the following conditions:
\begin{enumerate}
\item $\vert v \vert = v$ if $v \in V^+$.
\item $\vert v \vert \pm v \in V^+$.
\item $\vert k v \vert = \vert k \vert \vert v \vert$ for all $v \in V$ and $k \in \mathbb{R}$.
\end{enumerate}
\item There exists a mapping $\dot\vee : V \times V \to V$ that satisfies the following conditions:
\begin{enumerate}
\item $v \dot\vee v = v$.
\item $v \dot\vee w = w \dot\vee v$ for all $v, w \in V$.
\item $(u \dot\vee v) + w = (u + w) \dot\vee (v + w)$ for all $u, v, w \in V$.
\item $k (v \dot\vee w) = (k v) \dot\vee (k w)$ for all $v, w \in V$ and $ k \ge 0$.
\item If $v \dot\vee w = v$, then $(u \dot\vee v) \dot\vee w = u \dot\vee (v \dot\vee w)$.
\end{enumerate}
\item There exists a mapping $\dot\wedge : V \times V \to V$ that satisfies the following conditions:
\begin{enumerate}
\item $v \dot\wedge v = v$.
\item $v \dot\wedge w = w \dot\wedge v$ for all $v, w \in V$.
\item $(u \dot\wedge v) + w = (u + w) \dot\wedge (v + w)$ for all $u, v, w \in V$.
\item $k (v \dot\wedge w) = (k v) \dot\wedge (k w)$ for all $v, w \in V$ and $ k \ge 0$.
\item If $v \dot\wedge w = v$, then $(u \dot\wedge v) \dot\wedge w = u \dot\wedge (v \dot\wedge w)$.
\end{enumerate}
\item There exists a cone $V^+$ in $V$ and a binary operation $\perp$ in $V^+$ that satisfies the following conditions:
\begin{enumerate}
\item $u \perp 0$ for all $u \in V^+$.
\item If $u \perp v$, then $v \perp u$.
\item If $u \perp v$, then $k u \perp k v$ for all $k \in \mathbb{R}$ with $k > 0$.
\item For each $v \in V$, there exist a unique pair $v_1, v_2 \in V^+$ such that $v = v_1 - v_2$ with 
$v_1 \perp v_2$.
\end{enumerate}

\end{enumerate}
\end{theorem}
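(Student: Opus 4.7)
The plan is to prove the four-way equivalence through three pairwise implications centered on (1): namely $(1) \Leftrightarrow (4)$, $(1) \Leftrightarrow (2)$, and then $(2) \Leftrightarrow (3)$ by a direct sign change. The absolute value formulation (1) will serve as the bridge because it most transparently encodes both the cone and the decomposition data, and the $\dot\vee / \dot\wedge$ pair is symmetric under negation.

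For $(1) \Rightarrow (4)$ I would define $u \perp v$, for $u, v \in V^+$, to mean $\vert u - v \vert = u + v$, and for each $v \in V$ take the decomposition $v = v_1 - v_2$ with $v_1 := \tfrac{1}{2}(\vert v \vert + v)$ and $v_2 := \tfrac{1}{2}(\vert v \vert - v)$, both in $V^+$ by (1)(b). Axioms (a)--(c) of (4) are immediate from (1)(a) and (1)(c), while uniqueness is forced because any $v = w_1 - w_2$ with $w_1 \perp w_2$ yields $\vert v \vert = \vert w_1 - w_2 \vert = w_1 + w_2$, and solving the linear system gives $w_i = v_i$. For $(4) \Rightarrow (1)$, define $\vert v \vert := v_1 + v_2$ using the unique decomposition; properties (a) and (b) are clear, and property (c) for $k < 0$ is handled by noting that $-v = v_2 - v_1$ is the decomposition of $-v$, using axiom (4)(b) for $\perp$.

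For $(1) \Rightarrow (2)$, put $v \dot\vee w := \tfrac{1}{2}\bigl((v+w) + \vert v - w \vert\bigr)$; axioms (a)--(d) are routine, and for (e) the hypothesis $v \dot\vee w = v$ amounts to $v - w \in V^+$, reducing the restricted associativity to a direct translation computation. For $(2) \Rightarrow (1)$, set $V^+ := \{v \in V : v \dot\vee 0 = v\}$ and $\vert v \vert := v \dot\vee (-v)$. Applying axiom (e) with the trivial identity $0 \dot\vee 0 = 0$ yields $(u \dot\vee 0) \dot\vee 0 = u \dot\vee 0$ for every $u$, so $V^+ = \{u \dot\vee 0 : u \in V\}$ and is closed under positive scalars by (d). Translation invariance gives the key identity $(-u) \dot\vee 0 = 0$ for $u \in V^+$, and combining this with transitivity of the relation $a \preceq b :\Leftrightarrow a \dot\vee b = b$ yields $v \dot\vee (-u) = v$ whenever $u, v \in V^+$, whence $(u+v) \dot\vee 0 = u + v$. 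The properties of $\vert \cdot \vert$ then follow from the translation identities $\vert v \vert + v = (2v) \dot\vee 0$ and $\vert v \vert - v = 0 \dot\vee (-2v)$, both of which lie in $V^+$ by the same absorption argument. Finally, $(2) \Leftrightarrow (3)$ is obtained by the substitution $v \dot\wedge w := -\bigl((-v) \dot\vee (-w)\bigr)$, under which each of the five axioms for $\dot\vee$ transforms into the corresponding axiom for $\dot\wedge$, with the hypothesis $v \dot\wedge w = v$ in (e) corresponding to $(-v) \dot\vee (-w) = -v$.

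The main obstacle is closure of $V^+$ under addition in the direction $(2) \Rightarrow (1)$. Axiom (e) is intentionally a weak form of associativity, valid only when one factor is already absorbed, so transitivity of $\preceq$ must itself be established by rewriting each hypothesis via commutativity before (e) can be invoked. Once transitivity is secured, the chain $-u \preceq 0 \preceq v$ delivers closure under addition, and everything else is bookkeeping with translation invariance and positive homogeneity.
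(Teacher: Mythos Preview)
Your proof is correct and follows essentially the same route as the paper: the same constructions for $\dot\vee$, $\vert\cdot\vert$, $V^+$, and $\perp$, with the only difference being that you close the loop via $(2)\Rightarrow(1)$ (using $\dot\vee$) while the paper goes $(3)\Rightarrow(1)$ (using $\dot\wedge$)---these are exact duals under the sign change you already exploit for $(2)\Leftrightarrow(3)$. Your framing of the cone-closure step via transitivity of the relation $a \preceq b :\Leftrightarrow a \dot\vee b = b$ is a clean repackaging of the paper's direct chain of $\dot\wedge$-identities for the same fact.
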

\begin{proof}
First assume that the set of conditions (1) holds. For $v, w \in V$, we define
$$v \dot\vee w = \frac{1}{2} (v + w + \vert v - w \vert ).$$
Then $\dot\vee : V \times V \to V$ and (2) (a), (b), (c) and (d) follow in a routine way. Further note that $v \dot\vee w = v$ if and only if 
$w \le v$. Also, $v \le u \dot\vee v$. Now, we show that (2)(e) holds. Let $u, v, w \in V$ with $v \dot\vee w = v$. Then $w \le v \le u 
\dot\vee v$ so that $(u \dot\vee v) \dot\vee w = u \dot\vee v = u \dot\vee (v \dot\vee w)$. Thus (1) implies (2).

Next, assume that the set of conditions (2) holds. Set $v \dot\wedge w := v + w - (v \dot\vee w)$. Then $\dot\wedge: V \times V \to V$ 
is a binary mapping such that
$$v \dot\wedge w = v + w - (v \dot\vee w) = - \{ (- v) \dot\vee (- w) \} = \frac{1}{2} (v + w - \vert v - w \vert )$$
for all $v, w \in V$. Now conditions (3)(a) -- (3)(e) hold by dual arguments.

Finally, assume the conditions in the set (3). Put
$$V^+ = \{ v: v\dot\wedge 0 = 0 \}.$$
Let $v \in V^+$ and $k \ge 0$. Then $v \dot\wedge 0 = 0$ so that using (2)(d), we get $0 = k (v \dot\wedge 0) = (k v) \dot\wedge 0$. Thus $k v \in V^+$. Next, note that by using (2)(c) and (2)(d) we can show that $u in V^+$, that is, $u \dot\wedge 0 = 0$ if and only $(- u) \dot\wedge 0 = - u$ which is equivalent to $u \dot\wedge (- u) = - u$. Now, let $v, w \in V^+$. Then $- v = v \dot\wedge (- v)$ and 
$- w = w \dot\wedge (- w)$so that $- v - w = (v - w) \dot\wedge (- v - w)$ by (2)(c). Similarly, we get $v - w = (v + w) \dot\wedge (v - w)$. 
Thus using (2)(e), we have

\begin{align*}
(v + w) \dot\wedge (- v - w) &= (v + w) \dot\wedge \{ (v - w) \dot\wedge (- v - w) \} \\
&= \{ (v + w) \dot\wedge (v - w) \} \dot\wedge (- v - w) \\
&= (v - w) \dot\wedge (-v - w) \\
&= - v - w.
\end{align*}
Therefore, $V^+$ is a cone in $V$. Now we define 
$$\vert v \vert = - (v \dot\wedge (- v))$$
for all $v \in V$. Note that $v \in V^+$ if and only if $0 = 2 (v \dot\wedge 0) = (2 v) \dot\wedge 0 = - \vert v \vert  + v$ so that (1)(a) holds. 

To prove (1)(b), first we show that $-(v \dot\wedge 0) \in V^+$ for all $v \in V$. For this, let $v \in V$ and 
set $w = v \dot\wedge 0$. Then
$$w \dot\wedge 0 = ( v \dot\wedge 0 ) \dot\wedge 0 = v \dot\wedge 0 = w$$
by (2)(e) so that
$$w \dot\wedge (-w) = (2 w) \dot\wedge 0 - w = 2 ( w \dot\wedge 0) - w = w$$
by (2)(c) and (2)(d). Thus
$$- w = \vert w \vert = \vert (- w) \vert$$
and whence $-( v \dot\wedge 0) = - w \in V^+$. Now (1)(b) follows from a straight forward observation 
$$\vert v \vert \pm v = - 2 ( ( \mp v) \dot\wedge 0).$$
Now, the proof of (1)(c) directly follows from (2)(d).

Finally, we show the equivalence of (1) and (4). First let the set of conditions (1) hold. For $u, v \in V^+$, we define define $u \perp v$ 
if $\vert u - v \vert = u + v$. Then conditions (4)(a) -- (4)(d) directly follow from the definition of $\perp$.
Conversely, assume that the set of conditions (4) hold. For each $v$, define $\vert v \vert := v_1 + v_2$, using the uniqueness of (4)(d). 
Then $\vert\cdot\vert$ maps $V$ into $V^+$. Let $v \in V^+$. Since $v \perp 0$ by (4)(a), we have $\vert v \vert = v$. Also, by the definition of 
$\vert\cdot\vert$, we further see that $\vert v \vert \pm v \in V^+$.  Now, let $v \in V$. Then by (4)(d), there exists a unique pair 
$v_1, v_2 \in V^+$ with $v_1 \perp v_2$ such that $v = v_1 - v_2$. Then $k v_1 \perp k v_2$ for any $k \in \mathbb{R}$ with 
$k > 0$ using condition (4)(c). Now by the definition
$$\vert k v \vert = \vert (k v_1 - k v_2) \vert = k v_1 + k v_2 = k \vert v \vert.$$
Also, if $k \in \mathbb{R}$ with $k < 0$, then 
$$\vert k v \vert = \vert ((-k) v_2 - (-k) v_1) \vert = (-k) v_2 + (-k) v_1 = (-k) \vert v \vert.$$
Thus $\vert k v \vert = \vert k \vert \vert v \vert$ for all $k \in \mathbb{R}$.
\end{proof} 

Next, we recall Theorem 4.12 of \cite{AK2}.
\begin{theorem} \label{3}
Let $V$ satisfy the (equivalent sets of) conditions of Theorem \ref{1}. Then the following statements are equivalent:
\begin{enumerate}
\item[(i)] $v \dot\vee w = \sup \{ v, w \}$ for all $v, w \in V$.
\item[(ii)] $\dot\vee$ is associative in $V$.
\item[(iii)] $u \pm v \in V^+$ implies $\vert v \vert \le u$ for all $u, v \in V$.
\item[(iv)] $\vert v + w \vert \le \vert v \vert + \vert w \vert$ for all $v, w \in V$.
\end{enumerate}
Thus $V$ is a vector lattice if and only if one of the equivalent conditions of this result (in addition to the equivalent set of 
conditions of Theorem \ref{1}) holds in $V$. 
\end{theorem}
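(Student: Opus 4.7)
The plan is to establish $(i) \Leftrightarrow (ii)$, $(i) \Leftrightarrow (iii)$, and $(iii) \Leftrightarrow (iv)$, using throughout the explicit formula $v \dot\vee w = \tfrac{1}{2}(v+w+|v-w|)$ from the proof of Theorem \ref{1} together with the equivalence $w \le v \Leftrightarrow v \dot\vee w = v$. The direction $(i) \Rightarrow (ii)$ is immediate, since both $(u \dot\vee v) \dot\vee w$ and $u \dot\vee (v \dot\vee w)$ would compute $\sup\{u,v,w\}$. For $(ii) \Rightarrow (i)$, condition (1)(b) of Theorem \ref{1} already gives $v \dot\vee w \ge v,w$; to see it is the \emph{least} upper bound, take any $z$ with $z \ge v$ and $z \ge w$, so $v \dot\vee z = z$ and $w \dot\vee z = z$, and invoke full associativity:
\[
(v \dot\vee w) \dot\vee z = v \dot\vee (w \dot\vee z) = v \dot\vee z = z,
\]
whence $v \dot\vee w \le z$.

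Next I address $(i) \Leftrightarrow (iii)$. The explicit formula yields $|v| = v \dot\vee (-v)$, so if $\dot\vee$ is the supremum and $u \pm v \in V^+$ (i.e.\ $u \ge v$ and $u \ge -v$), then $u \ge \sup\{v,-v\} = |v|$; this gives $(i) \Rightarrow (iii)$. For $(iii) \Rightarrow (i)$, given $z$ with $z \ge v$ and $z \ge w$, set $u := z - \tfrac{1}{2}(v+w)$ and $v' := \tfrac{1}{2}(v-w)$; a direct check shows $u - v' = z - w \in V^+$ and $u + v' = z - v \in V^+$, so (iii) produces $\tfrac{1}{2}|v-w| = |v'| \le u$, which rearranges to $v \dot\vee w \le z$ and identifies $v \dot\vee w$ with $\sup\{v,w\}$.

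Finally, for $(iii) \Rightarrow (iv)$, take $u := |v|+|w|$; since $u \pm (v+w) = (|v| \pm v) + (|w| \pm w) \in V^+$ by (1)(b), (iii) gives $|v+w| \le |v|+|w|$. For $(iv) \Rightarrow (iii)$, decompose $v = \tfrac{1}{2}(u+v) - \tfrac{1}{2}(u-v)$ and apply (iv) together with (1)(c) to get
\[
|v| \le \tfrac{1}{2}|u+v| + \tfrac{1}{2}|u-v| = \tfrac{1}{2}(u+v) + \tfrac{1}{2}(u-v) = u,
\]
where the middle equality uses $u \pm v \in V^+$ and (1)(a). The step I expect to be the main obstacle is $(ii) \Rightarrow (i)$: the only associativity that is free from Theorem \ref{1} is the restricted form (2)(e), so one has to notice first that an upper bound $z$ of $\{v,w\}$ is absorbed by $\dot\vee$ from either side, thereby converting unrestricted associativity into the universal property of the supremum.
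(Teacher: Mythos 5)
Your proof is correct, and every step holds up under the hypotheses of Theorem \ref{1}: the absorption characterization $w \le v \Leftrightarrow v \dot\vee w = v$, the identity $\vert v \vert = v \dot\vee (-v)$, and the formula $v \dot\vee w = \tfrac{1}{2}(v+w+\vert v-w\vert)$ all follow from (1)(a)--(c) exactly as you use them, and your substitution $u = z - \tfrac{1}{2}(v+w)$, $v' = \tfrac{1}{2}(v-w)$ in $(iii) \Rightarrow (i)$ cleanly extracts the least-upper-bound property, while your treatment of $(ii) \Rightarrow (i)$ correctly identifies the only genuinely non-routine point, namely upgrading the restricted associativity (2)(e) to the universal property of the supremum by absorbing an upper bound $z$. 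Note, however, that the paper itself offers no proof to compare against: Theorem \ref{3} is simply recalled from \cite[Theorem 4.12]{AK2}, so your argument serves as a self-contained verification rather than an alternative to an in-text one. Two small points you should make explicit: in $(iii) \Rightarrow (i)$ you still need to record that $v \dot\vee w$ is an upper bound of $\{v,w\}$ (which follows from (1)(b) together with $\vert v-w \vert = \vert w-v \vert$ from (1)(c)), and the theorem's closing sentence about $V$ being a vector lattice, which you do not address, is an immediate consequence of (i): binary suprema yield binary infima via $v \dot\wedge w = -\bigl((-v) \dot\vee (-w)\bigr)$, and conversely a vector lattice whose lattice operations are the given $\dot\vee$ and $\dot\wedge$ (equivalently, whose lattice modulus is the given $\vert\cdot\vert$, which is what (iii) encodes) satisfies (i) by definition.
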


\begin{remark}\label{4} 
In addition to the (equivalent sets of) conditions of Theorem \ref{1}, there are some other properties which hold both in a vector lattice 
as well as in a C$^{\ast}$-algebra.

Let $V$ be a vector lattice.
\begin{enumerate}
\item If $u, v, w \in V$ with $u \wedge v = 0$ and $u \wedge w = 0$, then $u \wedge (v + w) = 0$. To see 
this, first note that $x \wedge y \le x$ for any $x, y \in V$ so that $u, v, w \in V^+$. Now, as $u \wedge v = 0$, we get $(u + w) \wedge 
(v + w) = w$. Thus 
$$0 = u \wedge w = u \wedge (u + w) \wedge (v + w) = u \wedge (v + w)$$ 
for $u \le u + w$.
\item Let $u, v \in V$ with $u \wedge v = 0$. If $0 \le w \le v$, then $u \wedge w = 0$. In fact, $w \wedge v 
= w$ so that 
$$u \wedge w = u \wedge v \wedge w = 0 \wedge w = 0.$$
In particular, if $u, v, w \in V^+$ with $u \wedge (v + w) = 0$, then $u \wedge v$ and $u \wedge w = 0$.
\item If $u, v, w \in V$ with $u \wedge v = 0$ and $u \wedge w = 0$, then $u \wedge \vert v - w \vert = 0$. 
This follows from (1) and (2) as $\vert v - w \vert \le v + w$.

Next, let $A$ be a C$^{\ast}$-algebra.
\item For $a \in A$ we define $\vert a \vert := (a^{\ast} a)^{\frac{1}{2}}$. Then $(A_{sa}, A^+, \vert\cdot\vert )$ satisfies the set of conditions 
(1) of Theorem \ref{1}. Also,  (1), (2) and (3) hold in $A_{sa}$ as well when we replace $\wedge$ by $\dot\wedge$. In fact, we have 
$a \dot\wedge b = 0$ if and only if $a, b \in A^+$ and $a b = 0$. To see this, first we note that $a \dot\wedge b = 0$ if and only if 
$\vert a - b \vert = a + b$. We show that $\vert a - b \vert = a + b$ if and only if $a, b \in A^+$ with $a b = 0$. First, let  $\vert a - b \vert = a + b$. 
Then 
$$a^2 - a b - b a + b^2 = \vert a - b \vert^2 = (a + b)^2 = a^2 + a b + b a + b^2$$
so that $a b + b a = 0$. Also, in this case, 
$$(a + b) \pm (a - b) = \vert a - b \vert \pm (a - b) \in A^+$$
so that $a, b \in A^+$. Thus $-a^2 b = a b a \in A^+$ and consequently, $a^2 b = b a^2$. Now, by the functional calculus in $A$, we 
can conclude that $a b = b a$. Consequently, $a b = 0$. Conversely, if $a, b \in A^+$ 
with $a b = 0$, then $a b = b a$ and using the (commutative) C$^{\ast}$-algebra generated by $a$ and $b$, we can show that 
$\vert a - b \vert =  a + b$ so that $a \dot\wedge b = 0$.
\item We have not been able to prove whether (1), (2) and (3) (with $\wedge$ replaced by $\dot\wedge$) will follow from the equivalent 
conditions of Theorem \ref{1} or not. Also, we find these properties useful for the develop the theory. Thus we shall include them in the 
definition of an {\it absolutely ordered space}. 
\item Let $V$ satisfy the equivalent conditions of Theorem \ref{1} and let $u, v \in V$. Then $u \dot\wedge v = 0$ if and only if 
$u, v \in V^+$ and $\vert u - v \vert = u + v$ (or equivalently, $u \perp v$). Further, $\vert 2 u - v \vert = v$ if and only if 
$0 \le u \le v$ and $u \perp (v - u)$.
\end{enumerate}
\end{remark}
\begin{definition} \label{2}
Let $(V, V^+)$ be a real ordered vector space and let $\vert\cdot\vert: V \to V^+$ be a mapping satisfying the following conditions:  
\begin{enumerate}
\item $\vert v \vert = v$ if $v \in V^+$; 
\item $\vert v \vert \pm v \in V^+$; 
\item $\vert k v \vert = \vert k \vert \vert v \vert$ for all $v \in V$ and $k \in \mathbb{R}$; 
\item If  $\vert u - v \vert = u + v$ and $0 \le w \le v$, then $\vert u - w \vert = u + w$; and
\item If $\vert u - v \vert = u + v$ and $\vert u - w \vert = u + w$, then $\vert u - ( v \pm w ) \vert = u + \vert v \pm w \vert$.
\end{enumerate} 
Then $(V, V^+, \vert\cdot\vert )$ will be called an {\emph absolutely ordered space}. 
\end{definition}

\subsection{Norms on absolutely ordered vector spaces}

\begin{definition}
Let $\left( V, V^+ \right)$ be a real ordered vector space such that $V^+$ is proper and generating and  
let $\Vert \cdot \Vert$ be a norm on $V$ such that $V^+$ is $\Vert \cdot \Vert$-closed. Let $1 \le p \le \infty$. 
For $u, v \in V^+$, we say that $u$ is $p$-orthogonal to $v$, (we write, $u \perp_p v$), if 
\begin{align*}
\Vert \alpha u + \beta v \Vert &= (\Vert \alpha u \Vert^p + \Vert \beta v \Vert^p )^{\frac{1}{p}} \quad 
(1 \le p < \infty ) \\
&= \max (\Vert \alpha u \Vert , \Vert \beta v \Vert ) \quad (p = \infty ).
\end{align*}
Further, we say that $u$ is absolutely $p$-orthogonal to $v$, (we write, $u \perp_p^a v$), if $u_1 \perp_p v_1$ 
whenever $0 \le u_1 \le u$ 
and $0 \le v_1 \le v$.
\end{definition}
The following observation describes the importance of $\perp_p$.
\begin{proposition}\label{6}
Let $(V, V^+, \vert\cdot\vert)$ be an absolutely ordered vector space and assume that $\Vert\cdot\Vert$ 
is a norm on $V$ such that $V^+$ is $\Vert\cdot\Vert$-closed. Then for $1 \le p \le \infty$, the following 
conditions are equivalent:
\begin{itemize}
\item[(A)] For each $v \in V$, we have 
\begin{align*}
\Vert \vert v \vert \Vert = \Vert v \Vert &= \left( \Vert v^+ \Vert^p + \Vert v^- \Vert^p \right)^{\frac{1}{p}} \quad 
(1 \le p < \infty )\\
&= \max ( \Vert v^+ \Vert, \Vert v^- \Vert ) \quad (p = \infty );
\end{align*}
\item[(B)] For $u, v \in V^+$, we have $u \perp_p^a v$ whenever $u \perp v$;
\item[(C)] For $u, v \in V^+$, we have $u \perp_p v$ whenever $u \perp v$.
\end{itemize}
If $\Vert\cdot\Vert$ is an order unit norm determined by the order unit $e$, then the above conditions (with $p = \infty$) 
are also equivalent to:
\begin{itemize}
\item[(D)] For each $v \in V$ with $\pm v \le e$, we have $\vert v \vert \le e$.
\end{itemize}
\end{proposition}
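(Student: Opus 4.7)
The plan is to establish the implication cycle (A) $\Rightarrow$ (B) $\Rightarrow$ (C) $\Rightarrow$ (A), and then, under the order unit norm hypothesis, to add the equivalence (A) $\Leftrightarrow$ (D). The tools throughout are the uniqueness of the decomposition $v = v^+ - v^-$ guaranteed by Theorem \ref{1}(4)(d), the closure properties of $\perp$ collected in Definition \ref{2}, and scalar homogeneity of the norm.

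For (C) $\Rightarrow$ (A), I fix $v \in V$, write $v = v^+ - v^-$ with $v^+ \perp v^-$, and apply (C) to the pair $(v^+, v^-)$. Specializing the defining identity of $\perp_p$ at $(\alpha,\beta) = (1,-1)$ and at $(\alpha,\beta) = (1,1)$ yields $\|v\|$ and $\|\,|v|\,\|$ respectively, and both equal $(\|v^+\|^p + \|v^-\|^p)^{1/p}$ (or the maximum when $p = \infty$). The implication (B) $\Rightarrow$ (C) is immediate: take $u_1 = u$ and $v_1 = v$.

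The technical core is (A) $\Rightarrow$ (B). Given $u \perp v$ with $0 \le u_1 \le u$ and $0 \le v_1 \le v$, I first apply clause (4) of Definition \ref{2} twice — once to contract $v$ down to $v_1$, then, after swapping the two factors (which is legal because $\perp$ is symmetric), to contract $u$ down to $u_1$ — obtaining $u_1 \perp v_1$. Clause (3) of Definition \ref{2} then scales this to $|\alpha|\, u_1 \perp |\beta|\, v_1$ for arbitrary real $\alpha,\beta$. The uniqueness asserted in Theorem \ref{1}(4)(d) pins down the positive and negative parts of $\alpha u_1 + \beta v_1$: after a possible outer sign flip one reduces to $\alpha,\beta \ge 0$, giving $|\alpha u_1 + \beta v_1| = |\alpha| u_1 + |\beta| v_1$. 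Invoking (A) on $\alpha u_1 + \beta v_1$ and using norm homogeneity produces the defining equation of $u_1 \perp_p v_1$. The delicate point — and the main obstacle — is precisely this sign-combination bookkeeping in the uniqueness step; everything else is formal.

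For the order unit case, (A) $\Rightarrow$ (D) is immediate: if $\pm v \le e$ then $\|v\| \le 1$, so (A) yields $\|\,|v|\,\| \le 1$, and the standard order unit identity $\|w\| = \inf\{r > 0 : -re \le w \le re\}$ together with $|v| \in V^+$ and closedness of $V^+$ upgrades this to $|v| \le e$. For (D) $\Rightarrow$ (A) with $p = \infty$, I would first prove $\|v\| = \|\,|v|\,\|$ (one inequality from $\pm v \le |v|$, the other by applying (D) to any $r$ with $\pm v \le re$), and then derive $\|v\| = \max(\|v^+\|, \|v^-\|)$ by combining $v^\pm \le |v|$ (which gives $\max(\|v^+\|, \|v^-\|) \le \|\,|v|\,\| = \|v\|$) with the sandwich $\pm v \le v^\pm \le \max(\|v^+\|, \|v^-\|)\, e$ (which gives the reverse inequality).
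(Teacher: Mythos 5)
Your proposal is correct and follows essentially the same route as the paper: the cycle (A)$\Rightarrow$(B)$\Rightarrow$(C)$\Rightarrow$(A), with (A)$\Rightarrow$(B) proved exactly as in the paper by contracting and rescaling the orthogonal pair and using the uniqueness of Theorem \ref{1}(4)(d) to identify $(k u_1 - l v_1)^{\pm} = k u_1, \, l v_1$ (note only that passing from $k u_1 \perp k v_1$ to $k u_1 \perp l v_1$ with $k \neq l$ requires clause (4) of Definition \ref{2} once more, not clause (3) alone, a point the paper also leaves implicit). The only difference is in the order-unit part, where the paper closes the loop by proving (D)$\Rightarrow$(C) (showing $\Vert u + v \Vert = 1$ for a normalized orthogonal pair), whereas you prove (D)$\Rightarrow$(A) directly via $\Vert v \Vert = \Vert \vert v \vert \Vert$ and the sandwich $\pm v \le v^{\pm} \le \max(\Vert v^+ \Vert, \Vert v^- \Vert)e$; both arguments are valid and of the same level of difficulty.
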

\begin{proof}
First, assume that (A) holds. Let $u, v \in V^+$ with $u \perp v$ and suppose that $0 \le u_1 \le u$ and 
$0 \le v_1 \le v$. Then for $k, l \in \mathbb{R}$ with $k, l > 0$, we have $k u_1 \perp l v_1$. If we set 
$w = k u_1 - l v_1$, then $\vert w \vert = k u_1 + l v_1, w^+ = k u_1$ and $w^- = l v_1$. Thus by (A), 
we have
\begin{align*}
\Vert k u_1 \pm l v_1 \Vert &= (\Vert k u_1 \Vert^p + \Vert l v_1 \Vert^p )^{\frac{1}{p}} \quad (1 \le p < \infty ) \\
&= \max (\Vert k u_1 \Vert , \Vert l v_1 \Vert ) \quad (p = \infty )
\end{align*}
so that $u \perp_p^a v$. Thus (A) implies (B). Now that (B) implies (C) is trivial. 

Next, assume that (C) holds. 
Let $v \in V$. Then $v^+ \perp v^-$. Thus by assumption, $v^+ \perp_p v^-$ so that 
\begin{align*}
\Vert v^+ \pm v^- \Vert &= (\Vert v^+ \Vert^p + \Vert v^- \Vert^p )^{\frac{1}{p}} \quad (1 \le p < \infty ) \\
&= \max (\Vert v^+ \Vert , \Vert v^- \Vert ) \quad (p = \infty ).
\end{align*}
Since $v = v^+ - v^-$ and $\vert v \vert = v^+ + v^-$, we see that (A) holds. 

Now, assume that $\Vert\cdot\Vert$ is an order unit norm 
determined by the order unit $e$. First let (D) hold. Let $u, v \in V^+$ with $u \perp v$. We show that 
$u \perp_{\infty} v$. Without any loss of generality, we may assume that $\Vert u \Vert = 1 = \Vert v \Vert$. Set $w = u - v$. As 
$- v \le w \le u$, we have $\Vert w \Vert \le \max \{ \Vert u \Vert, \Vert v \Vert \} = 1$ so that $\pm w \le e$. Now, by assumption, 
$u + v = \vert w \vert \le e$. Thus $1 = \Vert u \Vert \le \Vert u + v \Vert \le 1$ so that $u \perp_{\infty} v$. 

Finally,  assume that (A) holds. Let $v \in V$ with $\pm v \le e$. Then 
$\Vert v \Vert \le 1$. Now, by assumption, $\Vert \vert v \vert \Vert \le 
1$ so that $\vert v \vert \le e$. This completes the proof.
\end{proof}
\begin{definition}
Let $(V, V^+, \vert\cdot\vert)$ be an absolutely ordered vector space 
and assume that $\Vert\cdot\Vert$ is a norm on $V$ such that $V^+$ 
is $\Vert\cdot\Vert$-closed. For $1 \le p \le \infty$, we say that 
$(V, V^+, \vert\cdot\vert, \Vert\cdot\Vert )$ is an {\it absolutely order 
smooth $p$-normed space}, if it satisfies the following conditions.
\begin{itemize}
 \item[$(O.p.1)$:] For $u \le v \le w$ we have 
 \begin{align*}
\Vert v \Vert &\le (\Vert u \Vert^p + \Vert w \Vert^p )^{\frac{1}{p}} \quad (1 \le p < \infty ) \\
&\le \max (\Vert u \Vert , \Vert w \Vert ) \quad (p = \infty ); \qquad \textrm{and}
\end{align*}
\item[$(O.\perp_p.1)$:] For $u, v \in V^+$ with $u \perp v$, we have $u \perp_p^a v$.
\item[$(O.\perp_p.2)$:] For $u, v \in V^+$ with $u \perp_p^a v$, we have $u \perp v$.
\end{itemize} 
If in addition, (for $p = \infty$,)  $\Vert\cdot\Vert$ is an order unit 
norm on $V$ determined by an order unit $e$, we say that $(V, e)$ is 
an {\it absolute order unit space}.
\end{definition}
\begin{remark}\label{7} \hfill
\begin{enumerate} 
\item The self-adjoint part of every C$^{\ast}$-algebra is an absolutely order smooth $\infty$-normed space.
\item The self-adjoint part of the dual of any C$^{\ast}$-algebra is an absolutely order smooth $1$-normed space.
\item In general,  $T_p(H)_{sa}$, the self-adjoint part of the space of trace class operators on a complex Hilbert space $H$ is an absolutely 
order smooth $p$-normed space.
\item Let $(V, V^+, \vert\cdot\vert, \Vert\cdot\Vert )$ is an absolutely order smooth $p$-normed space. Assume that $u, v, w \in V^+$ 
with $u \perp v$ and $u \perp w$. Then for $\alpha, \beta > 0$, we have $u \perp \alpha v$ and $u \perp \beta w$ and consequently, 
$u \perp \vert \alpha v + \beta w \vert$. Thus $u \perp_p (\alpha v + \beta w)$ for all $\alpha, \beta \in \mathbb{R}$.
\item Recall that an order smooth $p$-normed space is a real ordered vector space $\left( V, V^+ \right)$ in which $V^+$ is proper and 
generating together with a norm $\Vert \cdot \Vert$ for which $V^+$ is closed such that the space satisfies the conditions 
$(O.p.1)$ and \\
$(O.p.2)$: For each $v \in V$ and $\epsilon > 0$, there exist $v_1, v_2 \in V^+$ such that $v = v_1 - v_2$ and 
 \begin{align*}
\Vert v \Vert + \epsilon &> (\Vert v_1 \Vert^p + \Vert v_2 \Vert^p )^{\frac{1}{p}} \quad (1 \le p < \infty ) \\
&> \max (\Vert v_1 \Vert , \Vert v_2 \Vert ) \quad (p = \infty ).
\end{align*} 
Thus an absolutely order smmoth $p$-normed space is an order smooth $p$-normed space satisfying \\
$(OS.p.2)$: For each $v \in V$, there exist $v_1, v_2 \in V^+$ such that $v = v_1 - v_2$ and 
 \begin{align*}
\Vert v \Vert &= (\Vert v_1 \Vert^p + \Vert v_2 \Vert^p )^{\frac{1}{p}} \quad (1 \le p < \infty ) \\
&= \max (\Vert v_1 \Vert , \Vert v_2 \Vert ) \quad (p = \infty ).
\end{align*} 
\item Let $(V, V^+, e)$ be an order unit space. Then, with the order unit norm $\Vert\cdot\Vert_e$ on $V$, $(V, V^+, \Vert\cdot\Vert_e )$ 
is an order smooth $\infty$-normed space satisfying $(OS. \infty . 2)$.
\end{enumerate}
\end{remark}

\section{Absolute orthogonality and symmetrized product}
By the definition, algebraic orthogonal pair of positive elements in a 
C$^{\ast}$-algebra commute.  In this section, we introduce an order 
theoretic notion, defined forpositive elements, which nearly imitates 
commutativity when considered in a C$^{\ast}$-algebra. 
\begin{proposition}\label{17}
	Let $(V, e)$ be an absolute order unit space and assume that $u, v \in [0, e]$. Then $u \perp v$ if and only if $u + v \le e$ and 
	$\vert u - v \vert + \vert u + v - e \vert =  e$.
\end{proposition}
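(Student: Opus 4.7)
The plan is to unpack the definitions and reduce the equivalence to a short computation, using Proposition \ref{6} to handle the only nontrivial step.

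First I would record the basic reformulation: for $u, v \in V^+$, the relation $u \perp v$ is, by definition in Theorem \ref{1}(4) (and as noted in Remark \ref{4}(6)), equivalent to the identity $\vert u - v \vert = u + v$. Similarly, the assumption $u + v \le e$ is equivalent to $u + v - e \in -V^+$, and since $\vert w \vert = -w$ when $w \in -V^+$, we then have $\vert u + v - e \vert = e - u - v$.

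For the forward direction, assume $u \perp v$. Since $(V,e)$ is an absolute order unit space, condition $(O.\perp_\infty.1)$ gives $u \perp_\infty^a v$, and so by Proposition \ref{6} (with the order unit norm characterization),
\[
\Vert u + v \Vert = \max\{\Vert u \Vert, \Vert v \Vert\} \le 1,
\]
because $u, v \in [0,e]$ forces $\Vert u \Vert, \Vert v \Vert \le 1$. Since $u + v \in V^+$, the basic property of order unit norms yields $u + v \le e$. Combining this with the reformulations above,
\[
\vert u - v \vert + \vert u + v - e \vert = (u + v) + (e - u - v) = e.
\]

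For the converse, assume $u + v \le e$ and $\vert u - v \vert + \vert u + v - e \vert = e$. The first assumption gives $\vert u + v - e \vert = e - u - v$ as observed above, and substituting into the given identity produces $\vert u - v \vert = u + v$, which is exactly $u \perp v$.

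The only substantive step is the forward direction's passage from $u \perp v$ to $u + v \le e$; this is where the full strength of the absolute order unit space axioms (through Proposition \ref{6}) is used, and everything else is formal manipulation of $\vert \cdot \vert$ on positive and negative elements.
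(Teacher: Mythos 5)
Your proof is correct and follows essentially the same route as the paper: the forward direction uses $u\perp v \Rightarrow u\perp_{\infty} v$ to get $\Vert u+v\Vert\le 1$, hence $u+v\le e$, and then both directions reduce to the identity $\vert u+v-e\vert = e-(u+v)$. The only difference is that you make explicit the appeal to $(O.\perp_\infty.1)$ and Proposition \ref{6}, which the paper leaves implicit.
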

\begin{proof}
	Let $u \perp v$. Then $\vert u - v \vert = u + v$ and $\Vert u + v \Vert = \max \{ \Vert u \Vert, \vert v \Vert \} \le 1$. Thus $u + v \in [0, e]$ so that 
	$$\vert u + v - e \vert = e - (u + v) = e - \vert u - v \vert.$$ 
	Now, it follows that $\vert u - v \vert + \vert u + v - e \vert =e$. Conversely, assume that $u + v \le e$ and that $\vert u - v \vert + \vert u + v - e \vert =e$. Then 
	$$e - \vert u - v \vert = \vert u + v - e \vert = e - (u + v)$$
	so that $\vert u - v \vert = u + v$. In other words, $u \perp v$.
\end{proof}

\begin{proposition}\label{14}
Let $(V, e)$ be an absolute order unit space. If $\vert u - v \vert + \vert u + v - e \vert = e$, then $u, v \in [0, e]$. 
\end{proposition}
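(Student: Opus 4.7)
The plan is to exploit only the most elementary property of the absolute value in an absolutely ordered space, namely that $\vert w \vert \pm w \in V^+$ for every $w \in V$ (condition (2) of Definition \ref{2}). Applying this to $w = u - v$ gives the two inequalities $\vert u - v \vert \ge u - v$ and $\vert u - v \vert \ge v - u$, and applying it to $w = u + v - e$ gives $\vert u + v - e \vert \ge u + v - e$ and $\vert u + v - e \vert \ge e - u - v$.

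Next, since $V^+$ is closed under addition, I will sum these inequalities in the four possible pairwise combinations (one from each pair) and substitute the hypothesis $\vert u - v \vert + \vert u + v - e \vert = e$ on the left-hand sides. A quick sign audit shows that the four resulting inequalities simplify to $e \ge 2u - e$, $e \ge 2v - e$, $e \ge e - 2u$, and $e \ge e - 2v$, which together say exactly that $0 \le u \le e$ and $0 \le v \le e$.

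I do not foresee any substantive obstacle: the conclusion is a direct algebraic consequence of the two defining inequalities $\vert \cdot \vert \pm (\cdot) \in V^+$, combined with the cone property of $V^+$. The only care needed is to pair the sign choices correctly so that each linear combination isolates $u$ or $v$ alone and cancels the cross-terms. It is perhaps worth noting in the write-up that the order unit norm and the equivalent condition (D) of Proposition \ref{6} are not used at all; the argument is purely order-theoretic and works in any absolutely ordered space containing a designated element $e$ that satisfies the given identity.
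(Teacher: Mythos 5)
Your proposal is correct and is essentially the paper's own argument: the paper likewise applies $\vert w \vert \pm w \in V^+$ to $w = u - v$ and $w = u + v - e$, adds the resulting positivity relations in the four sign combinations, and substitutes the hypothesis $\vert u - v \vert + \vert u + v - e \vert = e$ to obtain $0 \le u \le e$ and $0 \le v \le e$. Your observation that only the order-theoretic properties of $\vert\cdot\vert$ are used (and not the order unit norm) is also consistent with the paper's proof.
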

\begin{proof}
Assume that $\alpha = 1$. Then $\vert u - v \vert + \vert u + v - e \vert = e$. Thus we have  
\begin{eqnarray*}
0 &\le& \vert u - v \vert \pm (u - v) + \vert u + v - e \vert \pm (u + v - e) \\ 
&=& e \pm (u - v) \pm (u + v - e).
\end{eqnarray*} 
Thus 
$$e + (u - v) + (u + v - e) \ge 0,$$
$$e - (u - v) + (u + v - e) \ge 0,$$
$$e + (u - v) - (u + v - e) \ge 0$$
and
$$e - (u - v) - (u + v - e) \ge 0.$$
Now, it follows that $u, v \in [0, e]$.
\end{proof}
\begin{definition}
	Let $(V, e)$ be an absolute order unit space. Then $u, v \in [0, e]$ are said to be {\it absolutely comparable}, if 
	$\vert u - v \vert + \vert u + v - e \vert = e$. 
\end{definition}
\begin{proposition}\label{15}
Let $(V, e)$ be an absolute order unit space. Then for $u, v \in [0, e]$ the following statements are equivalent:
\begin{enumerate}
\item $u$ is absolutely compatible with $v$;
\item $u \dot\wedge v + u \dot\wedge (e - v) = u$;
\item $u \dot\wedge v + (e - u) \dot\wedge v = v$; 
\item $(e - u) \dot\wedge v + (e - u) \dot\wedge (e - v) = e - u$;
\item $u \dot\wedge (e - v) + (e - u) \dot\wedge (e - v) = e - v$;
\item $u \dot\wedge v + u \dot\wedge (e - v) + (e - u) \dot\wedge v + (e - u) \dot\wedge (e - v) = e$.
\end{enumerate}
\end{proposition}
\begin{proof}
Using symmetry in the condition for absolute compatibility, we may conclude that $u$ is absolutely compatible with $v$ if and only if 
$\{ u, e - u \}$ is absolutely compatible with $\{ v, e - v \}$. Next, as 
\begin{eqnarray*}
u \dot\wedge v + u \dot\wedge (e - v) &=& \frac{1}{2} \{ u + v - \vert u - v \vert + u + e - v - \vert u - e + v \vert \} \\
&=& u + \frac{1}{2} \{ e - ( \vert u - v \vert + \vert u + v - e \vert ) \},
\end{eqnarray*} 
we conclude that $u$ and $v$ are absolutely compatible with respect to $p$ if and only if $u \dot\wedge v + u \dot\wedge (e - v) = u$. 
Now combining the two observations, the proofs follow in a routine way.
\end{proof}
\begin{proposition}\label{16}
Let $(V, e)$ be an absolute order unit space. Then for $u, v \in [0, e]$ the following statements are equivalent:
\begin{enumerate}
\item $u$ is absolutely compatible with $v$;
\item $u \dot\wedge v, (e - u) \dot\wedge (e - v) \in V^+$ with $u \dot\wedge v \perp (e - u) \dot\wedge (e - v)$;
\item $u \dot\wedge (e - v), (e - u) \dot\wedge v \in V^+$ with $u \dot\wedge (e - v) \perp (e - u) \dot\wedge v$.
\end{enumerate}
\end{proposition}
\begin{proof}
First, let us assume that $u$ is absolutely compatible with $v$ so that $\vert u - v \vert + \vert u + v - e \vert = e$. Then  
\begin{eqnarray*}
u \dot\wedge v &=& \frac{1}{2} \{ u + v - \vert u - v \vert \} \\ 
&=& \frac{1}{2} \{ u + v - e + \vert u + v - e \vert \} \\
&=& (u + v - e)^+.
\end{eqnarray*}
In a similar manner, we can also show that 
$$u \dot\wedge (e - v) = (u - v)^+;$$
$$(e - u) \dot\wedge v = (u - v)^-;$$
and 
$$(e - u) \dot\wedge (e - v) = (u + v - e)^-.$$
Thus $u \dot\wedge v, u \dot\wedge (e - v), (e - u) \dot\wedge v, (e - u) \dot\wedge (e - v) \in V^+$ with 
$$u \dot\wedge v \perp (e - u) \dot\wedge (e - v)$$ 
and 
$$u \dot\wedge (e - v) \perp (e - u) \dot\wedge v.$$ 
Thus (1) implies (2) and (3).

Conversely, let $u \dot\wedge v, (e - u) \dot\wedge (e - v) \in V^+$ with $u \dot\wedge v \perp (e - u) \dot\wedge (e - v)$. Then by the definition, we have 
$$u \dot\wedge v + (e - u) \dot\wedge (e - v) = e - \vert u - v \vert$$ 
and 
$$u \dot\wedge v - (e - u) \dot\wedge (e - v) = u + v - e.$$
Now, $u \dot\wedge v \perp (e - u) \dot\wedge (e - v)$ implies that $\vert u + v - e \vert = e - \vert u - v \vert$, that is, $u$ is absolutely compatible with $v$. 

Dually, $u$ is absolutely compatible with $v$ if and only if $u \dot\wedge (e - v), (e - u) \dot\wedge v \in V^+$ with $u \dot\wedge (e - v) \perp (e - u) \dot\wedge v$. 
\end{proof}

\begin{remark}\label{17a}
Let $u, v \in [0, e]$. Since $(e - u) \dot\wedge (e - v) = e - (u \dot\vee v)$, it follows from Proposition \ref{16} that $u$ is absolutely compatible with $v$ 
if and only if $u \dot\wedge v, u \dot\vee v \in [0, e]$ with $u \dot\wedge v \perp (e - u \dot\vee v)$. Thus by Proposition \ref{17}, $u$ is absolutely compatible 
with $v$ if and only if $u \dot\wedge v$ is absolutely compatible with $u \dot\vee v$.  In particular, $u$ is absolutely compatible to itself if and only if $u \perp (e - u)$.
\end{remark}
Let $A$ be a C$^{\ast}$-algebra. For $a, b \in A$ we shall write, $a \circ b := \frac{1}{2} (a b + b a)$.
\begin{proposition} \label{11}
Let $A$ be a unital C$^{\ast}$-algebra. Then for $a, b \in A_{sa}$, we have $\vert a - b \vert + \vert a + b - 1 \vert = 1$ if and only if 
$a, b \in [0, 1]$ with $a \circ b = a \dot\wedge b$.
\end{proposition}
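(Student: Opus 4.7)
The plan is to change variables by setting $x := a - b$ and $y := a + b - 1$, both self-adjoint, so that $a = \tfrac{1}{2}(1 + y + x)$, $b = \tfrac{1}{2}(1 + y - x)$, and the desired identity $\vert a - b \vert + \vert a + b - 1 \vert = 1$ reduces to $\vert x \vert + \vert y \vert = 1$. In these variables $a \dot\wedge b = \tfrac{1}{2}(a + b - \vert a - b \vert) = \tfrac{1}{2}(1 + y - \vert x \vert)$ is immediate. For the symmetrized product, I expand
\[
ab + ba \;=\; \tfrac{1}{4}\bigl[(1+y+x)(1+y-x) + (1+y-x)(1+y+x)\bigr];
\]
in the two summands the commutator $[x, 1+y] = [x, y]$ appears with opposite signs and cancels, leaving $2(1+y)^2 - 2x^2$. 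Thus $a \circ b = \tfrac{1}{4}\bigl((1+y)^2 - x^2\bigr)$.

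Equating the two expressions, clearing denominators, and completing the square (using $\vert x \vert^{2} = x^{2}$) reduces $a \circ b = a \dot\wedge b$ to the single operator identity
\[
y^{2} \;=\; \bigl(\vert x \vert - 1\bigr)^{2}.
\]
Since both sides are positive elements of $A$, uniqueness of the positive square root gives the equivalent formulation $\vert y \vert = \bigl\vert \vert x \vert - 1 \bigr\vert$.

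With this equivalence in hand, both directions are short. For $(\Rightarrow)$: from $\vert x \vert + \vert y \vert = 1$ we have $1 - \vert x \vert = \vert y \vert \ge 0$, so $\bigl\vert \vert x \vert - 1 \bigr\vert = 1 - \vert x \vert = \vert y \vert$, giving $a \circ b = a \dot\wedge b$, while Proposition~\ref{14} supplies $a, b \in [0, 1]$. For $(\Leftarrow)$: from $a, b \in [0, 1]$ we get $-1 \le a - b \le 1$, hence $\vert x \vert \le 1$ and therefore $\bigl\vert \vert x \vert - 1 \bigr\vert = 1 - \vert x \vert$; combined with $\vert y \vert = \bigl\vert \vert x \vert - 1 \bigr\vert$ this yields $\vert x \vert + \vert y \vert = 1$.

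The only step that is not purely formal is the expansion of $ab + ba$: although $x$ and $y$ do not commute in general, the symmetrization annihilates $[x, y]$ and produces the commutative-looking expression $(1+y)^{2} - x^{2}$. Once this identity is verified, the rest is bookkeeping, and the single analytic ingredient is uniqueness of the positive square root in $A^{+}$.
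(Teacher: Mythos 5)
Your proposal is correct and is essentially the paper's own argument: after the substitution $x = a - b$, $y = a + b - 1$, your identity $y^{2} = (\vert x \vert - 1)^{2}$ is exactly the paper's comparison of $\vert a + b - 1 \vert^{2}$ with $(1 - \vert a - b \vert)^{2}$, and both proofs settle the converse by noting $\vert a - b \vert \le 1$ (from $a, b \in [0,1]$) and invoking uniqueness of the positive square root. The change of variables and the commutator-cancellation remark are only a cosmetic repackaging of the same squaring computation.
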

\begin{proof}
First let $\vert a - b \vert + \vert a + b - 1 \vert = 1$. Note that $\vert x \vert \pm x \in A^+$ for all $x \in A_{sa}$. Thus 
$1 \pm (a - b) \pm (a + b - 1) \in A^+$ so that 
$a, b \in [0, 1]$. Now 
$$\vert a + b - 1 \vert^2 = a^2 + b^2 + 1 + a b + b a - 2 a - 2 b$$
and 
$$(1 - \vert a - b \vert )^2 = 1 + a^2 + b^2 - a b - b a - 2 \vert a - b \vert.$$
Also, by assumption, we have $\vert a + b - 1 \vert = 1 - \vert a - b \vert$ so that 
\begin{eqnarray*}
a^2 + b^2 + 1 + a b + b a - 2 a - 2 b &=& \vert a + b - 1 \vert^2 \\ 
&=& (1 - \vert a - b \vert )^2 \\ 
&=& 1 + a^2 + b^2 - a b - b a - 2 \vert a - b \vert
\end{eqnarray*}
Thus $2 (a b + b a) = 2 (a + b - \vert a - b \vert)$ so that $a \circ b = a \dot\wedge b$.

Conversely, let $a, b \in [0, 1]$ with $a \circ b = a \dot\wedge b$. Expanding $a \circ b = a \dot\wedge b$ as above, we can show that 
$\vert a + b - 1 \vert^2 = (1 - \vert a - b \vert )^2$. Now, as $a, b \in [0, 1]$, we have 
$$- 1 \le - b \le a - b \le a \le 1$$
so that $\vert a - b \vert \le 1$. Thus $\vert a + b - 1 \vert = 1 - \vert a - b \vert$.
\end{proof} 
\begin{corollary}
Let $A$ be a unital C$^{\ast}$-algebra and $a, b \in A^+$. Set $\alpha = \max \{ \Vert a \Vert, \Vert b \Vert \}$. Then 
 $a$ is absolutely comparable with $b$ if and only if $a \circ b = \alpha (a \dot\wedge b)$.
\end{corollary}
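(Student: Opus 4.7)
The plan is to reduce the corollary to Proposition~\ref{11} by a direct rescaling. Since $a,b\in A^{+}$ with $\|a\|,\|b\|\le\alpha$, both elements lie in $[0,\alpha\cdot 1]$, so absolute comparability is to be read with respect to the order unit $\alpha\cdot 1$; that is, $a$ is absolutely comparable with $b$ exactly when
\[
|a-b|+|a+b-\alpha\cdot 1|=\alpha\cdot 1.
\]
The natural move is to normalize. I would set $\tilde a=\alpha^{-1}a$ and $\tilde b=\alpha^{-1}b$, so that $\tilde a,\tilde b\in[0,1]$.

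Next I would divide the displayed identity by $\alpha$ and use the positive homogeneity of $|\cdot|$ to rewrite it as
\[
|\tilde a-\tilde b|+|\tilde a+\tilde b-1|=1.
\]
Applying Proposition~\ref{11} to $\tilde a,\tilde b\in[0,1]$, this is equivalent to
\[
\tilde a\circ\tilde b=\tilde a\,\dot\wedge\,\tilde b.
\]

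To translate this back to $a,b$, I would use two scaling identities. For the Jordan product, bilinearity gives $\tilde a\circ\tilde b=\alpha^{-2}(a\circ b)$. For the lattice-like operation $\dot\wedge$, positive homogeneity (condition (3)(d) of Theorem~\ref{1}, which $A_{sa}$ satisfies by Remark~\ref{4}(4)) yields
\[
\tilde a\,\dot\wedge\,\tilde b=\alpha^{-1}(a\,\dot\wedge\,b).
\]
Equating the two expressions and multiplying through by $\alpha^{2}$ gives exactly $a\circ b=\alpha(a\,\dot\wedge\,b)$, which finishes the proof.

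There is no substantial obstacle: the work is all in Proposition~\ref{11}, and the corollary is a one-line homogeneity upgrade. The only point that requires a moment's care is recognizing that the two sides scale differently under the dilation ($\circ$ by $\alpha^{2}$, $\dot\wedge$ by $\alpha$), which is precisely why the factor $\alpha$ on the right-hand side of $a\circ b=\alpha(a\,\dot\wedge\,b)$ is forced.
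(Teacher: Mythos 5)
Your rescaling argument is correct and is exactly the intended derivation: the paper states this corollary without proof immediately after Proposition \ref{11}, and the homogeneity bookkeeping you carry out (the Jordan product scaling by $\alpha^{2}$, $\dot\wedge$ scaling by $\alpha$, and $\tilde a,\tilde b\in[0,1]$ holding automatically for positive elements of norm at most $\alpha$) is precisely what forces the factor $\alpha$. The only unaddressed point is the degenerate case $a=b=0$, where $\alpha^{-1}$ is undefined but both sides of the equivalence hold trivially.
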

For a projection, Proposition \ref{11} takes the following form. 
\begin{proposition}\label{12}
Let $A$ be a unital C$^{\ast}$-algebra. Then for $0 \le a \le 1$ in $A$ and for a projection $p$ in $A$, the following statements are equivalent:
\begin{enumerate}
\item $a$ is absolutely comparable with $p$; 
\item $a \dot\wedge p = a p$;
\item $a p = p a$.
\end{enumerate} 
In this case, $\inf \{ a, p \}$ exists (in $A^+$)  and is equal to $a \dot\wedge p$. 
\end{proposition}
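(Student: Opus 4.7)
The plan is to prove (2) $\Leftrightarrow$ (3) and (1) $\Leftrightarrow$ (3) separately, using Proposition \ref{11} as the bridge between the order-theoretic hypothesis (1) and the symmetrized product. The easy steps are (2) $\Rightarrow$ (3) and every implication starting from (3). Since $a \dot\wedge p = \tfrac{1}{2}(a + p - |a - p|)$ is manifestly self-adjoint, the equality $a \dot\wedge p = ap$ in (2) forces $ap = (ap)^{\ast} = pa$. For the converses from (3), assuming $ap = pa$ I pass to the unital C$^{\ast}$-subalgebra generated by $a$ and $p$, which is commutative, hence isomorphic to some $C(X)$ under which $p$ becomes the characteristic function of a clopen set $E \subset X$. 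A pointwise check on $E$ and $X \setminus E$ yields both $|a - p| + |a + p - 1| = 1$ and $a \dot\wedge p = \min(a, p) = ap$, so (3) implies (1) and (2).

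The main work is (1) $\Rightarrow$ (3). Proposition \ref{11} gives $a \circ p = a \dot\wedge p$; this identity alone only forces $ap + pa$ to be self-adjoint, which is automatic and not enough. The extra leverage comes from rewriting the hypothesis of (1) as $|a - p| = 1 - |a + p - 1|$ and substituting, giving
\[
a \dot\wedge p \;=\; \tfrac{1}{2}\bigl((a + p - 1) + |a + p - 1|\bigr) \;=\; (a + p - 1)^+ \;\ge\; 0.
\]
Combined with the generic bound $a \dot\wedge p \le p$, this yields $0 \le a \dot\wedge p \le p$. The standard C$^{\ast}$-algebraic fact that $0 \le x \le p$ for a projection $p$ forces $(1-p)x(1-p) = 0$, hence $x^{1/2}(1-p) = 0$, hence $x(1-p) = 0$, now applies to $x = a \dot\wedge p$. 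Substituting $a \dot\wedge p = \tfrac{1}{2}(ap + pa)$ into $(1-p)(a \dot\wedge p) = 0$ and using $(1-p)p = 0$ collapses the identity to $(1-p)ap = 0$, that is, $ap = pap$; taking the adjoint gives $pa = pap$, and therefore $ap = pa$.

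Once (3) is in hand, the infimum claim falls out: any $c \in A^+$ with $c \le a$ and $c \le p$ satisfies $c = pcp$ by the same projection-domination fact, and then $ap - c = pap - pcp = p(a - c)p \ge 0$, while $ap \le a, p$ follow from the positivity of $a(1-p)$ and $p(1-a)$ available once $a$ and $p$ commute; thus $\inf\{a, p\} = ap = a \dot\wedge p$. The principal obstacle throughout is the extraction of algebraic commutativity from a purely order-theoretic hypothesis. The key insight is that (1) delivers not only the Proposition \ref{11} identity $a \circ p = a \dot\wedge p$ but also the positivity $a \dot\wedge p \ge 0$; that combination, together with the projection property $p^2 = p$, is exactly what forces $ap$ to be self-adjoint.
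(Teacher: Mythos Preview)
Your proof is correct, but the route differs from the paper's in two places.

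For the implication starting from (1), the paper does not pass through Proposition~\ref{11}. Instead it writes the orthogonal decompositions $a-p=x_1-x_2$ and $a+p-1=y_1-y_2$ with $x_1x_2=0=y_1y_2$; the hypothesis $|a-p|+|a+p-1|=1$ together with the obvious linear relation forces $a=x_1+y_1$, $p=x_2+y_1$, $1-p=x_1+y_2$. Because $p$ and $1-p$ are orthogonal projections, one reads off $x_1p=0$, hence $ap=y_1=a\dot\wedge p$, establishing (2) directly. Your argument instead uses Proposition~\ref{11} to get $a\circ p=a\dot\wedge p$, then observes (from the same hypothesis) that $a\dot\wedge p=(a+p-1)^+\ge 0$, so $0\le a\dot\wedge p\le p$, and compresses by $1-p$ to force $ap=pap$. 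This is a genuinely different mechanism: you trade the explicit four-piece decomposition for a positivity-plus-compression trick, which is shorter and more conceptual but yields (3) rather than the stronger intermediate conclusion (2).

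For $(3)\Rightarrow(1)$, the paper computes $|a-p|$ and $|a+p-1|$ directly from the splitting $a=pap+(1-p)a(1-p)$ and adds; you invoke Gelfand on the commutative subalgebra generated by $a$ and $p$ and check pointwise. These are equivalent in spirit---the paper's computation is just the pointwise argument written out in operator form---so there is no substantive difference here.

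The infimum arguments are essentially the same in both.
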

\begin{proof}
$(1) \implies (2)$: 
First, assume that $\vert a - p \vert + \vert a + p - 1 \vert = 1$. Let $a - p = x_1 - x_2$ and $a + p - 1 = y_1 - y_2$ 
such that $x_1, x_2, y_1, y_2 \in A^+$ with $x_1 x_2 = 0$ and $y_1 y_2 = 0$. Then $\vert a - p \vert = x_1 + x_2$ and $\vert a + p - 1 \vert = y_1 + y_2$. Thus 
$x_1 + x_2 + y_1 + y_2 = 1$ and $x_1 - x_2 + y_1 - y_2 =  2 a - 1$ so that $a = x_1 + y_1$ and $p = x_2 + y_1$ and consequently, $1 - p = x_1 + y_2$. 
As $p$ and $1 - p$ are projections with $p (1 - p) = 0$, it follows that $x_1 y_1 = 0$ and $x_2 y _2 = 0$. In particular, $x_1 p = 0$ so that $a p = y_1$. Thus 
$$a \dot\wedge p = \frac{1}{2} \{ a + p - \vert a - p \vert \} = y_1 = a p.$$
As $a \dot\wedge p$ is self-adjoint, $(2) \implies (3)$ is evident.

$(3) \implies (1)$: Next, assume that $a p = p a$. Then $a p = p a p$ and $a (1 - p) = (1 - p) a (1 - p)$ so that $a = p a p + (1 - p) a (1 - p)$. As $0 \le a \le 1$, 
we see that $0 \le p a p \le p$ and $0 \le (1 - p) a (1 - p) \le (1 - p)$. Thus 
\begin{eqnarray*}
\vert a - p \vert &=& \vert (1 - p) a (1 - p) - (p - p a p) \vert \\ 
&=& (1 - p) a (1 - p) + (p - p a p)
\end{eqnarray*}
and 
\begin{eqnarray*}
\vert a + p - 1 \vert &=& \left\vert \left( (1 - p) - (1 - p) a (1 - p) \right) - p a p \right\vert \\ 
&=& \left( (1 - p) - (1 - p) a (1 - p) \right) + p a p.
\end{eqnarray*}
Adding them, we get $\vert a - p \vert + \vert a + p - 1 \vert = 1$. 

Finally, assume that $a p = p a$. Then $a p = a^{\frac{1}{2}} p a^{\frac{1}{2}} \le a$ and $ap = p a p \le p$. As $a, p \in A^+$ and $a p = p a$, we have 
$a p \in A^+$. Next, let $x \in A^+$ be such that $x \le a$ and $x \le p$. As $p$ is a projection, we get $x p = p x = x$. Thus 
$$x = p x p \le p a p = a p$$
so that $a \dot\wedge p = a p = \inf \{ a, p \}$.
\end{proof} 
\begin{corollary}
Let $A$ be a unital C$^{\ast}$-algebra. Then for $a \in A^+$ and for a projection $p$ in $A$, the following statements are equivalent:
\begin{enumerate}
\item $a$ is absolutely comparable with $p$; 
\item $a \dot\wedge p = a p$;
\item $a p = p a$.
\end{enumerate}
\end{corollary}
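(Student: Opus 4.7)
The plan is to derive this corollary from Proposition~\ref{12} by a rescaling argument. If $a = 0$ the three conditions hold trivially, so I may assume $a \neq 0$ and set $a_0 := a / \Vert a \Vert \in [0, e]$. The condition $ap = pa$ is plainly equivalent to $a_0 p = p a_0$, and the absolute comparability of $a$ with $p$ reduces to that of $a_0$ with $p$ in $[0, e]$ via the extended notion of absolute comparability given by the corollary following Proposition~\ref{11}. Proposition~\ref{12} applied to the pair $(a_0, p)$ then supplies the three equivalences for $a_0$ and $p$, which I would transfer back to $a$.

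For the transfers, $(2) \Rightarrow (3)$ is the easiest: since $a \dot\wedge p$ is self-adjoint, the identity $a \dot\wedge p = a p$ forces $ap = (ap)^{\ast} = pa$. Conversely, once $ap = pa$ is known, the elements $a$, $p$, and $e$ generate a commutative $C^{\ast}$-subalgebra $B$ of $A$ in which $\dot\wedge$ coincides with the pointwise infimum under Gelfand representation; a direct pointwise computation (using $p(x) \in \{0, 1\}$) then yields $a \dot\wedge p = a p$ and gives $(3) \Rightarrow (2)$. The equivalence $(1) \Leftrightarrow (3)$ follows the structure of Proposition~\ref{12}: on one side, decomposing $a - p$ and $a + p - e$ into differences of orthogonal positives with vanishing products recovers $ap = pa$; on the other, writing $a = p a p + (1-p) a (1-p)$ and using $0 \le p a p \le p$ and $0 \le (1-p) a (1-p) \le 1-p$ lets one compute $\vert a - p \vert + \vert a + p - e \vert = e$ directly.

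The main obstacle is ensuring that condition~(2) transfers correctly under the rescaling $a \leadsto a_0$, because $\dot\wedge$ is jointly positively homogeneous but not separately linear in each argument, so one cannot naively pull scalars in or out of one slot. This subtlety is resolved by first securing the commutativity $ap = pa$ (either via $(2) \Rightarrow (3)$ or via the extended absolute comparability), and then reducing the remaining work to the commutative $C^{\ast}$-subalgebra generated by $a$, $p$, and $e$, where all the identities become pointwise assertions about nonnegative continuous functions against the characteristic function of a clopen set.
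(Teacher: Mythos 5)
Your reduction strategy is the natural one, and you correctly identify the danger point (joint, not separate, homogeneity of $\dot\wedge$), but the step you offer to resolve it is exactly where the argument breaks. Once $ap=pa$, the Gelfand computation in the commutative C$^{\ast}$-subalgebra generated by $a$, $p$ and $e$ gives $a \dot\wedge p = \min(\hat a,\hat p)$, which at points of the support of $p$ equals $\min(\hat a,1)$, whereas $\widehat{ap}=\hat a$ there; the two agree only when $\hat a \le 1$ on the support of $p$, i.e.\ only when $p a p \le p$. So the pointwise computation does not ``yield $a \dot\wedge p = ap$'': already $a = 2e$, $p = e$ satisfies $ap=pa$ but $a \dot\wedge p = e \ne 2e = ap$. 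What the rescaling really gives, multiplying $a_0 \dot\wedge p = a_0 p$ (Proposition \ref{12} applied to $a_0 = \Vert a \Vert^{-1}a$) through by $\Vert a \Vert$ and using $k(v \dot\wedge w) = (kv)\dot\wedge(kw)$, is $a \dot\wedge (\Vert a \Vert p) = ap$: the scalar must land in the projection slot, it cannot simply disappear. Hence for $\Vert a \Vert > 1$ your transfer of condition (2) asserts an identity that is false, and a careful execution of your own pointwise check would have revealed this rather than closed the gap. The same issue infects your $(3)\Rightarrow(1)$ computation: writing $a = pap + (e-p)a(e-p)$ and copying Proposition \ref{12} needs $pap \le p$ and $(e-p)a(e-p) \le e-p$, and indeed by Proposition \ref{14} the identity $\vert a-p\vert + \vert a+p-e\vert = e$ forces $a \in [0,e]$, so it cannot follow from commutativity alone when $\Vert a \Vert > 1$.

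The handling of (1) is also not secured. Absolute compatibility is not invariant under simultaneous scaling of the pair (by Proposition \ref{11}, $u$ is absolutely compatible with itself iff $u^2 = u$), and the extended notion in the corollary after Proposition \ref{11} normalizes by $\alpha = \max\{\Vert a \Vert, \Vert p \Vert\}$, i.e.\ it concerns the pair $(\alpha^{-1}a, \alpha^{-1}p)$, which for $\Vert a \Vert > 1$ is $(a_0, \Vert a \Vert^{-1}p)$ and not $(a_0, p)$ as you assert; with that reading one even gets commuting counterexamples to $(1)\Leftrightarrow(3)$, e.g.\ $a = (1/2, 2)$, $p = (1,1)$ in $\mathbb{C}^2$, where $a \circ p = (1/2,2)$ but $\alpha (a \dot\wedge p) = (1,2)$. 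The statement your rescaling argument actually proves --- and the only one consistent with Proposition \ref{12}, since the paper supplies no proof of this corollary --- is: $ap = pa$ iff $a \dot\wedge (\Vert a \Vert p) = ap$ iff $\Vert a \Vert^{-1}a$ is absolutely compatible with $p$. You should either prove that rescaled version, or restrict to $pap \le p$ (in particular $\Vert a \Vert \le 1$, which is just Proposition \ref{12} itself); as written, your proposal claims identities that fail for $\Vert a \Vert > 1$.
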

Commuting projections yield the following refinement of Proposition \ref{12}. 
\begin{proposition}\label{13}
Let $A$ be a unital C$^{\ast}$-algebra and let $\mathcal{P}(A)$ denote the set of projections in $A$. Then for $p, q \in \mathcal{P}(A)$, 
we have $p q = q p$ if and only if $p \dot\wedge q \in \mathcal{P}(A)$. In this case, $p \dot\wedge q = p q = \inf_{\mathcal{P}(A)} \{p, q \}$.
\end{proposition}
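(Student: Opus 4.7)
For the forward direction, suppose $pq = qp$. Since commuting projections satisfy $(pq)^* = qp = pq$ and $(pq)^2 = p^2q^2 = pq$, we have $pq \in \mathcal{P}(A)$. Applying Proposition \ref{12} with $a = q$ yields $p \dot\wedge q = pq$, hence $p \dot\wedge q \in \mathcal{P}(A)$. The same proposition gives $p \dot\wedge q = \inf\{p,q\}$ in $A^+$, which is a fortiori the infimum in $\mathcal{P}(A)$; alternatively, if $r \in \mathcal{P}(A)$ with $r \le p$ and $r \le q$, then the standard relations $rp = pr = r$ and $rq = qr = r$ give $r \cdot pq = pq \cdot r = r$, so $r \le pq$ in $\mathcal{P}(A)$.

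For the converse, set $r := p \dot\wedge q$ and assume $r \in \mathcal{P}(A)$. The plan is to first identify $r$ algebraically with the symmetrized product $\tfrac{1}{2}(pq + qp)$, and then use the rigidity coming from $r$ being a projection to force $pq = qp$. Since $r$ is a projection with $r \le p$ and $r \le q$, we have $pr = rp = qr = rq = r$. The defining identity $|p - q| = p + q - 2r$, squared, reads $(p - q)^2 = (p + q - 2r)^2$. Expanding both sides and simplifying via $p^2 = p$, $q^2 = q$, $r^2 = r$ and the relations just noted, the left side becomes $p + q - pq - qp$ and the right side becomes $p + q + pq + qp - 4r$, so that
\[
    pq + qp = 2r.
\]
Left-multiplying this identity by $p$ and using $p^2 = p$ together with $pr = r$ gives $pq + pqp = pq + qp$, i.e.\ $pqp = qp$; right-multiplying by $p$ and using $rp = r$ gives $pqp = pq$. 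Hence $pq = pqp = qp$, and the infimum claim follows by reducing to the forward direction.

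The principal obstacle is the converse. Proposition \ref{11} already tells us that absolute compatibility of $p$ and $q$ forces $r = p \circ q$, but a symmetric identity like $2r = pq + qp$ does not by itself yield commutativity. The extra hypothesis that $r$ be a projection is exactly what supplies the one-sided cancellation relations $pr = rp = r$ (and similarly for $q$), and these are precisely what upgrade the symmetric identity to the asymmetric one $pq = qp$. Once this structural observation is made, the rest is a short direct computation.
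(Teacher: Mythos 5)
Your proof is correct. The forward direction is essentially the paper's (the paper verifies $p\dot\wedge q = pq$ directly by writing $|p-q| = |(p-r)-(q-r)| = (p-r)+(q-r)$ for $r = pq$, whereas you delegate to Proposition \ref{12}; both are fine, and your observation that the infimum in $A^+$ is a fortiori the infimum in $\mathcal{P}(A)$ closes that gap cleanly). The converse is where you genuinely diverge. The paper notes that $|(p - p\dot\wedge q) - (q - p\dot\wedge q)| = (p - p\dot\wedge q) + (q - p\dot\wedge q)$ and then invokes the characterization (Remark \ref{4}(4)) that for positive $a,b$ the identity $|a-b| = a+b$ forces $ab = ba = 0$; expanding $(p-r)(q-r) = 0$ and $(q-r)(p-r)=0$ then gives $pq = qp = r$ at one stroke. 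You instead square $|p-q| = p+q-2r$ to obtain the symmetric identity $pq+qp = 2r$ and then break the symmetry by one-sided multiplication by $p$, using $pr = rp = r$ to get $pqp = qp$ and $pqp = pq$. Your route is more elementary and self-contained: it avoids the orthogonality characterization (whose own proof runs through functional calculus) and needs nothing beyond the algebra of projections and the relations $pr=rp=r$, $qr=rq=r$, which hold because $r = p\dot\wedge q$ is a projection dominated by $p$ and by $q$. The paper's route is shorter on the page but leans on heavier machinery already established earlier; it also delivers $pq = r$ directly rather than via $pqp$. Your closing remark correctly identifies the role of the projection hypothesis on $r$: without the cancellation $pr=rp=r$ the identity $pq+qp=2r$ (which is just Proposition \ref{11}) would not upgrade to commutativity.
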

\begin{proof}
First let $p q = q p := r$. Then $r = \inf_{\mathcal{P}(A)} \{p, q \}$. We show that $r = p \dot\wedge q$. Since $p - r, q - r \in \mathcal{P}(A)$ 
with $(p - r) (q - r) = 0$, we get 
$$\vert p - q \vert = \vert (p - r) - (q - r) \vert = (p - r) + (q - r).$$
Thus $p \dot\wedge q = \frac{1}{2} \{ p + q - \vert p - q \vert \} = r$.

Conversely, assume that $p \dot\wedge q \in \mathcal{P}(A)$. Since $p \dot\wedge q \le p$ and $p \dot\wedge q \le q$, we get
$p (p \dot\wedge q) = (p \dot\wedge q) p = p \dot\wedge q$ and $q (p \dot\wedge q) = (p \dot\wedge q) q = p \dot\wedge q$. Also, then 
$$\vert (p - p \dot\wedge q) - (q - p \dot\wedge q) \vert = \vert p - q \vert = p + q - 2 (p \dot\wedge q),$$
so that $(p - p \dot\wedge q) (q - p \dot\wedge q ) = 0 = (q - p \dot\wedge q) (p - p \dot\wedge q )$. Thus $p q = q p = p \dot\wedge q$.
\end{proof} 

\section{Commuting projections}

Now we shall present order theoretic replicas of Propositions \ref{12} and \ref{13}. 
\begin{definition}
Let $V$ be an ordered vector space. For $u \in V^+$ we set 
$$V_u := \{ v \in V: k u \pm v \in V^+ ~\textrm{ for some} ~ k > 0 \}.$$
If $(V, e)$ is an order unit space, then $u \in V^+$ is said to have the {\it order unit property} in $V$, if for any $v \in V_u$ we have, 
$\pm v \le \Vert v \Vert u$. In this case, $(V_u, u)$ is also an order unit space and $\Vert v \Vert_u = \Vert v \Vert_e$ for each $v \in V_u$. 

If $(V, e)$ is an absolute order unit space, then $u \in V^+$ is said to have the {\it absolute order unit property} in $V$, if for any 
$v \in V_u$ we have, $\vert v \vert \le \Vert v \Vert u$. In this case, $(V_u, u)$ is also an absolute order unit space and 
$\Vert v \Vert_u = \Vert v \Vert_e$ for each $v \in V_u$.
\end{definition}
\begin{definition}
Let $(V, e, \vert\cdot\vert )$ is an absolute order unit space. Consider the set 
$$OP(V) = \{ p \in [0, 1]: p \perp (e - p) \}.$$
Note that $0, e \in OP(V)$ and that $e - p \in OP(V)$ if $p \in OP(V)$. We shall write $p^{\prime}$ for $e - p \in OP(V)$. 
Elements of $OP(V)$ will be called {\it order projections} for the following reason. 
\end{definition}
\begin{theorem} \label{18}
Let $A$ be a unital C$^{\ast}$-algebra. For $a \in [0, 1]$, these statements are equivalent:
\begin{enumerate}
\item $a$ is a projection in $A$;
\item $a$ is an extreme point of $[0, 1]$;
\item $a$ has the order unit property in $A$; 
\item $[0, a] \cap [0, 1 - a] = \{ 0 \}$;
\item $a \perp (1 - a)$.
\end{enumerate}
\end{theorem}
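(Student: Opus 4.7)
The plan is to anchor each of (2)--(5) directly to (1) rather than chase a single cycle, since the preliminary material (Remark \ref{4}, Proposition \ref{8}, and Kadison's classical extreme-point theorem) turns most directions into short observations. The trivial cases $a = 0$ and $a = 1$ can be disposed of directly, so I will tacitly assume $a, 1-a \in A^+ \setminus \{0\}$.

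The equivalence (1) $\Leftrightarrow$ (5) should fall out immediately: $a \perp (1-a)$ unwinds to $a \dot\wedge (1-a) = 0$, which by Remark \ref{4}(4) is equivalent to $a(1-a) = 0$, i.e.\ $a^2 = a$. The equivalence (1) $\Leftrightarrow$ (2) I would simply invoke as Kadison's classical characterization of projections as the extreme points of the positive part of the closed unit ball of a unital C$^{\ast}$-algebra.

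For (1) $\Rightarrow$ (3) the key is to show that if $a$ is a projection and $k a \pm v \in A^+$, then $v = a v a$. Compressing $ka \pm v \ge 0$ by $1 - a$ and using $a(1-a) = 0$ forces $(1-a) v (1-a) = 0$; a standard $2 \times 2$ positivity argument applied to $ka + v$ with respect to the decomposition $1 = a + (1-a)$ then forces the off-diagonal corners $a v (1-a)$ and $(1-a) v a$ to vanish as well, so $v \in aAa$. Inside the corner $aAa$, which is a unital C$^{\ast}$-algebra with unit $a$, the self-adjoint element $v$ satisfies $\vert v \vert \le \Vert v \Vert a$ by the usual order-unit estimate, giving the absolute order unit property.

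The converse loop (3) $\Rightarrow$ (4) $\Rightarrow$ (1) closes the argument. If (3) holds and $0 \le c \le a$ with $c \le 1 - a$, the absolute order unit property yields $c \le \Vert c \Vert a \le \Vert c \Vert (1 - c)$, which rearranges to $(1 + \Vert c \Vert) c \le \Vert c \Vert \cdot 1$ and forces $\Vert c \Vert = 0$, so $c = 0$. For (4) $\Rightarrow$ (1), observe that (4) is precisely condition (3) of Proposition \ref{8} taken with $b := 1 - a$; since $a$ and $1 - a$ automatically commute, Proposition \ref{8} delivers $a(1-a) = 0$, which is (1). The only genuinely technical step is the $2 \times 2$ positivity argument in (1) $\Rightarrow$ (3); every other direction is one or two lines of rearrangement or a direct appeal to already-cited results.
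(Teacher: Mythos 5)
Your proof is correct, but it routes the equivalences differently from the paper. The paper dispatches $(1)\Leftrightarrow(3)$ by citing \cite[Corollary 3.2 and Theorem 3.3]{AK0} and $(4)\Leftrightarrow(5)$ by citing \cite[Proposition 4.1]{AK2}, and then only proves $(1)\Leftrightarrow(4)$ by hand: $(1)\Rightarrow(4)$ via the compression $0 \le (1-a)x(1-a) \le (1-a)a(1-a) = 0$, and $(4)\Rightarrow(1)$ via the one-line observation $0 \le a - a^2 \le a$ together with $a - a^2 = (1-a)^{1/2} a (1-a)^{1/2} \le 1-a$. You instead make $(3)$ a working node of the cycle $(1)\Rightarrow(3)\Rightarrow(4)\Rightarrow(1)$: your corner argument proving that $ka \pm v \ge 0$ forces $v = ava$ is a self-contained substitute for the \cite{AK0} citation (and is in the same spirit as the paper's compression step, just deployed at a different link), your $(3)\Rightarrow(4)$ estimate $(1+\Vert c \Vert)c \le \Vert c \Vert \cdot 1$ is a new implication the paper never needs, and you tie $(5)$ directly to $(1)$ via Remark \ref{4}(4) rather than through $(4)$. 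The trade-off is clear: your version is more self-contained and makes the role of the corner $aAa$ explicit, while the paper's is shorter because it leans on previously established results; your appeal to Proposition \ref{8} for $(4)\Rightarrow(1)$ is legitimate (condition (3) there with $b = 1-a$ is exactly statement (4), and $a$ commutes with $1-a$), though the paper's direct square-root computation avoids even that dependency. One minor point worth recording in a final write-up: statement (3) only asks for the order unit property ($\pm v \le \Vert v \Vert a$), and your argument actually delivers the stronger absolute order unit property $\vert v \vert \le \Vert v \Vert a$, which costs nothing but should be stated as such.
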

\begin{proof}
Whereas the equivalence of (1) and (2) is a classical result of C$^{\ast}$-algebra theory, the 
equivalence of (1) and (3) was observed in \cite[Corollary 3.2 and Theorem 3.3]{AK0}. Further, as 
$a$ and $1 - a$ commute, the equivalence of (4) and (5) follows from \cite[Proposition 4.1]{AK2}. 

(1) implies (4): Assume that $a^2 = a$ and let $x \in [0, a] \cap [0, 1 - a]$. Then $0 \le x \le a$ and 
$0 \le x \le 1 - a$. Thus 
$$0 \le (1 - a) x (1 - a) \le (1 - a) a (1 - a) = 0$$
so that $(1 - a) x (1 - a) = 0$ and consequently, $x = a x = x a = a x a$. Now as $0 \le x \le 1 - a$, 
as above we get $a x a = 0$ so that $x = 0$.

(4) implies (1): Finally, assume that $[0, a] \cap [0, 1 - a] = \{ 0 \}$. As $0 \le a \le 1$, we have 
$0 \le a^2 \le a$ and consequently, $0 \le a - a^2 \le a$. Also, 
$$a - a^2 = (1 - a)^{\frac{1}{2}} a (1 - a)^{\frac{1}{2}} \le (1 - a)$$
so that by assumption, $a - a^2 = 0$. Thus $a$ is a projection.
\end{proof}
\begin{remark} 
Note that $a$ is a projection if and only if $1 - a$ is also a projection. Thus we can replace $a$ by $1 - a$ 
 in (1), (2) and (3) of Theorem \ref{18}.
\end{remark} 

Throughout the section, $V$ will be an absolute order unit space and $OP(V)$ will denote the set of all order projections in $V$. 
\begin{proposition}\label{26}
For $p, q \in OP(V)$, the following statements are equivalent:
\begin{enumerate}
\item $p + q \le e$;
\item $p \perp q$;
\item $p + q \in OP(V)$, and 
\item $p \perp_{\infty} q$.
\end{enumerate}
\end{proposition}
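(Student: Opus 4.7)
The plan is to close the equivalences through the cycle $(1)\Rightarrow(2)\Rightarrow(4)\Rightarrow(1)$ together with a second loop $(2)\Rightarrow(3)\Rightarrow(1)$, which together force all four conditions to be equivalent.

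I would begin with $(1)\Rightarrow(2)$: the hypothesis $p+q\le e$ rewrites as $0\le q\le e-p$, and since $p\in OP(V)$ I already have $p\perp(e-p)$; then condition (4) of Definition \ref{2} (monotonicity of $\perp$ in the second slot) immediately yields $p\perp q$. Next, $(2)\Rightarrow(4)$ is a direct invocation of the axiom $(O.\perp_\infty.1)$ built into the definition of an absolute order unit space, and $(4)\Rightarrow(1)$ follows because $\Vert p+q\Vert\le\max\{\Vert p\Vert,\Vert q\Vert\}\le 1$ together with $p+q\in V^+$ and the order-unit-norm characterization of $[0,e]$ gives $p+q\le e$.

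The real content lives in $(2)\Rightarrow(3)$. I would set $u:=e-p-q$, which is positive because (2) implies (1) via the segment just established. Then $0\le u\le e-p$ and $0\le u\le e-q$, so applying condition (4) of Definition \ref{2} once to the relation $p\perp(e-p)$ and once to $q\perp(e-q)$ yields $p\perp u$ and $q\perp u$. Finally, condition (5) of Definition \ref{2} with the ``$+$'' sign promotes these two orthogonality relations into the single relation $u\perp(p+q)$, which is exactly $(p+q)\perp(e-(p+q))$, i.e.\ $p+q\in OP(V)$. The implication $(3)\Rightarrow(1)$ is then trivial since $OP(V)\subseteq[0,e]$ by definition.

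The only genuinely non-routine step I anticipate is $(2)\Rightarrow(3)$, where one must pass from two separate relations $u\perp p$ and $u\perp q$ to the single relation $u\perp(p+q)$. This is precisely the service performed by axiom (5) of the absolutely-ordered-space definition — the axiom was included in Definition \ref{2} for exactly this kind of use — so once the right element $u=e-p-q$ has been identified, the step collapses to a clean two-line application of the axioms rather than the delicate calculation it might appear to require.
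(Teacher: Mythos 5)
Your proposal is correct and follows essentially the same route as the paper: $(1)\Rightarrow(2)$ via the restriction axiom (4) of Definition \ref{2}, $(2)\Rightarrow(4)$ via $(O.\perp_\infty.1)$ (Proposition \ref{6}), $(4)\Rightarrow(1)$ via the order unit norm, and the passage to (3) by applying axiom (4) to $e-p-q$ and then the additivity axiom (5) to get $(e-(p+q))\perp(p+q)$. The only cosmetic difference is that the paper states this last segment as $(1)\Rightarrow(3)$ rather than $(2)\Rightarrow(3)$, which is immaterial given the already-closed cycle.
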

\begin{proof}
(1) implies (2): Let $p + q \le e$. Then $0 \le p \le e - q$ so that $p \perp q$, by the definition.

(2) implies (4): By Proposition \ref{6}.

(4) imlies (1): Let $p \perp_{\infty} q$. Then $\Vert p + q \Vert = \max \{ \Vert p \Vert, \vert q \Vert \} \le 1$ so that $p + q \le e$.

(1) implies (3): Let $p + q \le e$ so that $p \le e - q$. As $p, q \in OP(V)$, we have $p \perp e - p$ and $q \perp e - q$. Now, 
$0 \le e - (p + q) \le e - p$ and $e - (p + q) \le e - q$. Thus $e - (p + q) \perp p$ and $e - (p + q) \perp q$. It follows from the additivity 
of $\perp$ that $e - (p + q) \perp p + q$ so that $p + q \in OP(V)$. Finally, (3) implies (1) by the definition of $OP(V)$.
\end{proof}
\begin{proposition}\label{27}
Let $u, v \in [0, e]$. If $u + v \in OP(V)$ with $u \perp v$, then $u, v \in OP(V)$.
\end{proposition}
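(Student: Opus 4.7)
The goal is to verify the two order-orthogonality relations $u \perp (e-u)$ and $v \perp (e-v)$. The hypotheses translate as follows: $u + v \in OP(V)$ means $u+v \in [0,e]$ and $(u+v) \perp (e-u-v)$, while $u \perp v$ is given. By symmetry of the conclusion, it suffices to deal with $u$, and the argument for $v$ is identical with the roles swapped.

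The plan is to use directly the two ``compatibility'' axioms of the definition of an absolutely ordered space. First, since $\perp$ is symmetric, I would rewrite the hypothesis $(u+v) \perp (e-u-v)$ as $(e-u-v) \perp (u+v)$. Combined with $0 \le u \le u+v$, condition (4) of Definition~\ref{2} (``if $\vert x-y \vert = x+y$ and $0 \le w \le y$ then $\vert x-w \vert = x+w$'') immediately gives $(e-u-v) \perp u$, i.e., $u \perp (e-u-v)$.

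Now $u \perp v$ is given and $u \perp (e-u-v)$ has just been established. Since both $v$ and $e-u-v$ lie in $V^+$, one has $v + (e-u-v) = e-u$ and $\vert v + (e-u-v)\vert = e-u$. Applying condition (5) of Definition~\ref{2} to the two orthogonality relations then yields
\[
\vert u - (v + (e-u-v)) \vert = u + \vert v + (e-u-v) \vert,
\]
which is exactly $\vert u - (e-u) \vert = u + (e-u)$, i.e., $u \perp (e-u)$. Thus $u \in OP(V)$, and running the same argument with $u$ and $v$ interchanged shows $v \in OP(V)$.

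There is no real obstacle: the statement is designed precisely to be a direct corollary of the additivity and monotonicity axioms (4) and (5) baked into the definition of an absolutely ordered space. The only point that deserves care is to notice that one should not try to split $e - u = v + (e-u-v)$ before having established $u \perp (e-u-v)$; the order of the two applications (first the ``restriction'' axiom (4), then the ``additivity'' axiom (5)) is essential.
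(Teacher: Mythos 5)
Your proof is correct and follows essentially the same route as the paper: first restrict the relation $(u+v)\perp(e-u-v)$ down to $u\perp(e-u-v)$ using $0\le u\le u+v$, then combine with $u\perp v$ via the additivity axiom to obtain $u\perp(e-u)$. The paper's proof is a compressed version of exactly this argument.
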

\begin{proof}
Set $p = u + v$ so that $u \perp (p - u)$. Since $p \in OP(V)$, we have $p \perp (e - p)$. Since $u \le p$, we get $u \perp (e - p)$. Now 
by the additivity of $\perp$, we get $u \perp (e - u)$ so that $u \in OP(V)$. Similarly, we can show that $v \in OP(V)$.
\end{proof}
\begin{corollary}\label{28}
Let $p, q \in OP(V)$ such that $p \le q$. Then $q - p \in OP(V)$.
\end{corollary}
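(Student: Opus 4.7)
The plan is to realize $q - p$ as the complement of an element already known to lie in $OP(V)$, exploiting two structural properties of $OP(V)$: its closure under the involution $r \mapsto e - r$ (noted right in the definition of $OP(V)$) and Proposition~\ref{26}, which tells us exactly when the sum of two order projections is again an order projection.

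First I would observe that $p \le q \le e$ gives $p + (e - q) = e - (q - p) \le e$. Since $p \in OP(V)$ by hypothesis and $e - q \in OP(V)$ because $OP(V)$ is closed under $r \mapsto e - r$, the elements $p$ and $e - q$ are both order projections satisfying $p + (e-q) \le e$. Applying the implication $(1) \Rightarrow (3)$ of Proposition~\ref{26} I conclude that $p + (e - q) \in OP(V)$. Taking the complement once more then yields $q - p = e - [\,p + (e - q)\,] \in OP(V)$, which is the desired conclusion.

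There is no real obstacle here; the argument is a two-line application of the complement-closure of $OP(V)$ with Proposition~\ref{26} sandwiched in between. An equally clean alternative, which I would mention in passing, is to invoke Proposition~\ref{27} directly with $u = p$ and $v = q - p$: then $u + v = q \in OP(V)$, and $p \perp (q - p)$ is obtained from $p \perp (e - p)$ together with property~(4) of Definition~\ref{2} (monotonicity of $\perp$ in the second variable), since $0 \le q - p \le e - p$.
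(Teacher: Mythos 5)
Your proposal is correct. Your primary argument is a genuinely different (though equally short) route from the paper's: you pass to complements, note $p + (e-q) = e - (q-p) \le e$ with both summands in $OP(V)$, invoke the equivalence $(1)\Leftrightarrow(3)$ of Proposition~\ref{26} to place $p + (e-q)$ in $OP(V)$, and then complement back. The paper instead works directly with $r = q - p$: since $0 \le r \le e - p$ and $p \perp (e-p)$, condition (4) of Definition~\ref{2} gives $p \perp r$, and since $p + r = q \in OP(V)$, Proposition~\ref{27} yields $r \in OP(V)$ --- which is precisely the alternative you mention in passing. The two arguments are close cousins (Proposition~\ref{26}'s implication $(1)\Rightarrow(3)$ and Proposition~\ref{27} both rest on the same monotonicity and additivity of $\perp$), but your complementation route has the mild aesthetic advantage of never needing to verify an orthogonality relation explicitly, while the paper's route is more self-contained in that it exhibits the decomposition $q = p + (q-p)$ with $p \perp (q-p)$, a fact the paper immediately records in the remark following the corollary. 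Either version is a complete proof.
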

\begin{proof}
Set $q - p = r$ so that $r \in [0, e]$. Since $0 \le r \le e - p$ and $p \perp (e - p)$, we have $p \perp r$. Thus by Proposition \ref{27}, 
$r \in OP(V)$.
\end{proof}
\begin{remark}
Let $p, q \in OP(V)$ such that $p \le q$. Then $p \perp (q - p)$.
\end{remark}
\begin{proposition}\label{29}
Let $u, v \in V$. Then 
\begin{enumerate}
\item $u \dot\vee v + u \dot\wedge v = u + v$; 
\item $u \dot\vee v - u \dot\wedge v = \vert u - v \vert$;
\item $u - u \dot\wedge v, v - u \dot\wedge v \in V^+$; and
\item $(u - u \dot\wedge v) \perp (v - u \dot\wedge v)$. 
\end{enumerate}
\end{proposition}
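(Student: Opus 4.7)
The strategy is to reduce everything to the explicit formulas for $\dot\vee$ and $\dot\wedge$ already established in the proof of Theorem \ref{1}, namely
\[
u \dot\vee v = \tfrac{1}{2}\bigl(u+v+|u-v|\bigr), \qquad u \dot\wedge v = \tfrac{1}{2}\bigl(u+v-|u-v|\bigr).
\]
These are available because an absolute order unit space satisfies conditions (1)--(3) of Definition \ref{2}, which in particular yield the equivalent set (1) of Theorem \ref{1}, and hence the formulas above. With these formulas in hand, statements (1) and (2) of the proposition reduce to a direct addition and subtraction.

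For part (3), I would write
\[
u - u \dot\wedge v = \tfrac{1}{2}\bigl(|u-v| + (u-v)\bigr), \qquad v - u \dot\wedge v = \tfrac{1}{2}\bigl(|u-v| - (u-v)\bigr).
\]
Condition (3) of Definition \ref{2} (homogeneity with $k=-1$) gives $|v-u|=|u-v|$, so the second expression can be rewritten as $\tfrac{1}{2}\bigl(|v-u| + (v-u)\bigr)$. Both of these then lie in $V^+$ by condition (2) of Definition \ref{2}, which says $|w| \pm w \in V^+$.

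For part (4), recall that for $x,y \in V^+$ the relation $x \perp y$ was characterized in the last part of the proof of Theorem \ref{1} by $|x - y| = x + y$. Setting $x = u - u \dot\wedge v$ and $y = v - u \dot\wedge v$, I would compute
\[
x - y = u - v, \qquad x + y = u + v - 2(u \dot\wedge v) = |u-v|,
\]
the latter equality following from the formula for $u \dot\wedge v$. Hence $|x-y| = |u-v| = x+y$, which is exactly $x \perp y$.

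No real obstacle is anticipated: once the explicit formulas from Theorem \ref{1} are invoked, every part is a short algebraic rearrangement combined with the defining conditions of an absolutely ordered space. The only subtlety to flag is ensuring that $|v-u| = |u-v|$ is used explicitly so that the positivity claim in (3) and the identification $x+y = |u-v|$ in (4) are justified from the axioms rather than tacitly from intuition carried over from vector lattices.
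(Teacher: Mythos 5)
Your proof is correct and follows essentially the same route as the paper: all four parts are short algebraic consequences of the formulas $u \dot\vee v = \tfrac{1}{2}(u+v+\vert u-v\vert)$ and $u \dot\wedge v = \tfrac{1}{2}(u+v-\vert u-v\vert)$ together with the axioms $\vert w\vert \pm w \in V^+$ and $\vert{-w}\vert = \vert w\vert$. The only cosmetic difference is in part (4), where the paper invokes translation-invariance to get $(u - u\dot\wedge v)\dot\wedge(v - u\dot\wedge v) = u\dot\wedge v - u\dot\wedge v = 0$ and then reads off orthogonality, whereas you verify $\vert x - y\vert = x + y$ directly; the two are equivalent given the formula for $\dot\wedge$.
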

\begin{proof}
The statements (1), (2) and (3) follow from the definitions of $\dot\wedge$ and $\dot\vee$; and (4) follows from the fact that 
$$(u - u \dot\wedge v) \dot\wedge (v - u \dot\wedge v) = u \dot\wedge v - u \dot\wedge v = 0.$$ 
\end{proof}
\begin{theorem}\label{30}
Let $V$ be an absolute order unit space and let $p, q \in OP(V)$. Then the following statements are equivalent:
\begin{enumerate}
\item $p \dot\wedge q \in OP(V)$.
\item $p \dot\vee q \in OP(V)$.
\item $p^{\prime} \dot\wedge q^{\prime} \in OP(V)$.
\item $p^{\prime} \dot\vee q^{\prime} \in OP(V)$.
\end{enumerate}
\end{theorem}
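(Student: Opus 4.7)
The plan is to reduce the four equivalences to a single implication and then prove that implication directly. For the reduction, I would combine the translation-invariance $(u+w) \dot\wedge (v+w) = (u \dot\wedge v) + w$ from Theorem \ref{1}(3)(c) with the identity $v \dot\wedge w = -\bigl((-v) \dot\vee (-w)\bigr)$ that is derived in the proof of Theorem \ref{1} to obtain the dualities
\[
p' \dot\wedge q' = e - (p \dot\vee q), \qquad p' \dot\vee q' = e - (p \dot\wedge q).
\]
Together with the trivial observation that $r \in OP(V) \iff e - r \in OP(V)$ (by symmetry of $\perp$), this at once yields (1)$\iff$(4) and (2)$\iff$(3). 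The direction (2)$\implies$(1) then follows by applying (1)$\implies$(2) to $p', q'$ in place of $p, q$ and feeding the result through the same dualities, so it suffices to prove (1)$\implies$(2).

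For this, I would set $r = p \dot\wedge q$, $s = p \dot\vee q$, $\tilde p = p - r$, $\tilde q = q - r$. Proposition \ref{29} delivers $\tilde p, \tilde q \in V^+$, $\tilde p \perp \tilde q$, and $s = r + \tilde p + \tilde q$. Assuming $r \in OP(V)$, hereditariness (Definition \ref{2}(4)) applied to $r \perp (e-r)$, together with $\tilde p, \tilde q \le e - r$ (since $r + \tilde p = p \le e$ and likewise for $\tilde q$), yields $r \perp \tilde p$ and $r \perp \tilde q$; the additivity condition Definition \ref{2}(5) upgrades this to $r \perp (\tilde p + \tilde q)$. Proposition \ref{6} (via $(O.\perp_\infty.1)$) now converts these orthogonalities into the norm identities $\|s\| = \max(\|r\|,\|\tilde p + \tilde q\|)$ and $\|\tilde p + \tilde q\| = \max(\|\tilde p\|,\|\tilde q\|)$, so $\|s\| \le 1$ and hence $s \le e$. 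With $s \le e$ in hand, $0 \le e - s \le e - r$, so hereditariness also gives $r \perp (e-s)$. From $\tilde p \le p$ and $p \perp (e-p)$, hereditariness gives $\tilde p \perp (e-p)$; combining this with $\tilde p \perp \tilde q$ and $(e-p) - \tilde q = e - s \ge 0$ via the subtraction case of Definition \ref{2}(5) yields $\tilde p \perp (e-s)$, and symmetrically $\tilde q \perp (e-s)$. A final application of the additivity in Definition \ref{2}(5) assembles $s = r + \tilde p + \tilde q \perp (e-s)$, so $s \in OP(V)$.

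The main obstacle is the order in which the positivity and orthogonality claims are verified. The subtraction clause of Definition \ref{2}(5) produces $\tilde p \perp (e-s)$ only once $(e-p) - \tilde q$ is known to be non-negative, i.e.\ once $s \le e$; so the norm computation via $\perp_\infty^a$ must precede the final orthogonality assembly. Everything else is routine bookkeeping on the identities $e - r = \tilde p + \tilde q + (e-s)$ and $e - s = (e-p) - \tilde q = (e-q) - \tilde p$.
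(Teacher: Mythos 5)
Your proposal is correct. The global architecture coincides with the paper's: both reduce everything to the single implication (1)$\Rightarrow$(2) by means of the dualities $p^{\prime} \dot\wedge q^{\prime} = e - (p \dot\vee q)$, $p^{\prime} \dot\vee q^{\prime} = e - (p \dot\wedge q)$ and the stability of $OP(V)$ under $r \mapsto e - r$, and both then cycle through the four statements. Where you diverge is in how (1)$\Rightarrow$(2) itself is proved. The paper stays at the level of order projections: writing $r = p \dot\wedge q$, it first shows $p - r \in OP(V)$ (Corollary \ref{28}), then uses Proposition \ref{29} and the additivity of $\perp$ to get $q \perp (p - r)$, and finally invokes Proposition \ref{26} to conclude that the sum $q + (p - r) = p \dot\vee q$ of two mutually orthogonal order projections is again an order projection. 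You never establish that $p - r$, $q - r$ or their sum are order projections; instead you verify $(p \dot\vee q) \perp (e - p \dot\vee q)$ directly from the axioms, using hereditariness (Definition \ref{2}(4)) and additivity/subtraction (Definition \ref{2}(5)) on the decomposition $p \dot\vee q = r + (p - r) + (q - r)$, after first securing $p \dot\vee q \le e$ through the $\perp_{\infty}$ norm identity supplied by $(O.\perp_{\infty}.1)$. In effect you inline the content of Propositions \ref{26}--\ref{27} and Corollary \ref{28} (whose proofs use exactly these hereditariness, additivity and norm arguments), so your argument is more self-contained but longer, while the paper's is shorter because it leans on that already-established machinery; your version also correctly identifies the one genuinely order-sensitive point, namely that $p \dot\vee q \le e$ must be obtained before the subtraction clause can be applied to produce $(p - r) \perp (e - p \dot\vee q)$.
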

\begin{proof}
(1) implies (2): Let $p \dot\wedge q \in OP(V)$. By Proposition \ref{29}, $p \dot\wedge q \le p$ so that by Corollary \ref{28}, 
$p - (p \dot\wedge q) \in OP(V)$. Now, $(p \dot\wedge q) + (p - p \dot\wedge q) = p \in OP(V)$ so that by Proposition \ref{26}, we get 
$p \dot\wedge q \perp (p - p \dot\wedge q)$. Since $(p - p \dot\wedge q) \perp (q - p \dot\wedge q)$ by Proposition \ref{26}, the 
additivity of $\perp$ yields, $q \perp (p - p \dot\wedge q)$. Again invoking Propositions \ref{26} and \ref{29}, we may conclude that 
$p \dot\vee q = q + p - p \dot\wedge q \in OP(V)$. \\
(2) implies (3): We have $p^{\prime} \dot\wedge q^{\prime} = e - (p \dot\vee q) \in OP(V)$, if $p \dot\vee q \in OP(V)$.\\
Now, (3) implies (4) by step one and (4) implies (1) by step two.
\end{proof}
\begin{theorem}\label{31}
Let $V$ be an absolute order unit space and let $p, q \in OP(V)$. Then the following statements are equivalent:
\begin{enumerate}
\item $p \dot\wedge q \in OP(V)$;
\item $p$ is absolutely compatible with $q$;
\item $p \dot\wedge q, p \dot\wedge q^{\prime} \in OP(V)$.
\end{enumerate}
\end{theorem}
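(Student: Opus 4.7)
The plan is to prove the cycle $(2) \Rightarrow (1) \Rightarrow (2)$ first, and then derive $(2) \Leftrightarrow (3)$ by exploiting the symmetry $q \leftrightarrow q' = e - q$ in the definition of absolute compatibility.

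For the direction $(1) \Rightarrow (2)$, I would combine Theorem \ref{30} with the hereditary property of $\perp$ built into Definition \ref{2}(4). By hypothesis $p \dot\wedge q \perp (e - p \dot\wedge q)$. Theorem \ref{30} upgrades this to $p \dot\vee q \in OP(V)$, so $p \dot\vee q \le e$ and therefore $0 \le e - p \dot\vee q \le e - p \dot\wedge q$. The hereditary axiom transfers the orthogonality down to $p \dot\wedge q \perp (e - p \dot\vee q)$, and since $e - p \dot\vee q = p' \dot\wedge q'$ (as noted in Remark \ref{17a}), this is exactly the form of absolute compatibility supplied by Proposition \ref{16}(2).

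For the direction $(2) \Rightarrow (1)$, I would use Proposition \ref{15}(6) to rewrite $e - p \dot\wedge q$ as the sum $p \dot\wedge q' + p' \dot\wedge q + p' \dot\wedge q'$, and then show that $p \dot\wedge q$ is $\perp$ to each of the three summands. The orthogonality $p \dot\wedge q \perp p \dot\wedge q'$ follows from $q \perp q'$ via Definition \ref{2}(4) applied twice (once to each factor), and similarly $p \perp p'$ yields $p \dot\wedge q \perp p' \dot\wedge q$; the remaining orthogonality $p \dot\wedge q \perp p' \dot\wedge q'$ is precisely Proposition \ref{16}(2). An application of the additivity axiom Definition \ref{2}(5) then assembles the three pieces into $p \dot\wedge q \perp (e - p \dot\wedge q)$, which is $p \dot\wedge q \in OP(V)$.

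Finally, for the equivalence with (3), I would observe that the defining equation $|p - q| + |p + q - e| = e$ is invariant under $q \mapsto q'$: the two absolute values are simply swapped. Hence (2) for the pair $(p, q)$ is equivalent to (2) for the pair $(p, q')$, and feeding both into the already-established $(2) \Rightarrow (1)$ yields (3); the converse $(3) \Rightarrow (1)$ is tautological. The subtlest point of the whole argument is the additivity step in $(2) \Rightarrow (1)$: additivity of $\perp$ is an axiom of absolutely ordered spaces rather than a derivable consequence of the cone-and-modulus structure, which is precisely why Definition \ref{2}(5) was built into the setup, and it is this ingredient that forces $OP(V)$ to be closed under $\dot\wedge$ in the presence of absolute compatibility.
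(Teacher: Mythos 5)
Your argument is correct, and it reaches the theorem by a route that overlaps with the paper's in one direction and genuinely departs from it in the other. For $(1)\Rightarrow(2)$ you and the paper both rely on Theorem \ref{30} together with the hereditary axiom, but you descend in one step from $p \dot\wedge q \perp (e - p\dot\wedge q)$ to $p\dot\wedge q \perp (e - p\dot\vee q) = p^{\prime}\dot\wedge q^{\prime}$ and then quote Proposition \ref{16}(2), whereas the paper computes $p\dot\wedge q \pm p^{\prime}\dot\wedge q^{\prime}$ explicitly and extracts $\vert p + q - e\vert = e - \vert p - q\vert$ from $p\dot\wedge q \le p$, $p^{\prime}\dot\wedge q^{\prime} \le p^{\prime}$ and $p \perp p^{\prime}$; the two are interchangeable. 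The real divergence is in the converse: the paper proves $(2)\Rightarrow(3)$ directly by showing $p\dot\wedge q = (p - q^{\prime})^+ \ge 0$, $p\dot\wedge q \perp p\dot\wedge q^{\prime}$, and $p\dot\wedge q + p\dot\wedge q^{\prime} = p$, then invokes Proposition \ref{27} (orthogonal summands of an order projection are order projections), while you verify $p\dot\wedge q \perp (e - p\dot\wedge q)$ by hand, splitting $e - p\dot\wedge q$ into the three remaining corners via Proposition \ref{15}(6), checking orthogonality to each, and applying additivity twice, and you then recover $(3)$ from the invariance of the compatibility equation under $q \mapsto q^{\prime}$. Both are valid; the paper's use of Proposition \ref{27} is shorter, while your version makes the role of the additivity axiom fully explicit and isolates the $q \leftrightarrow q^{\prime}$ symmetry that the paper only uses implicitly. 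The one point you should state rather than leave tacit is that the positivity of the corners $p\dot\wedge q^{\prime}$ and $p^{\prime}\dot\wedge q$ under hypothesis $(2)$ --- which is needed before the hereditary axiom can be applied to the pairs dominated by $q \perp q^{\prime}$ and $p \perp p^{\prime}$ --- comes from Proposition \ref{16} (parts (2) and (3) together), not from the definition of $\dot\wedge$ alone.
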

\begin{proof}
Note that 
(3) is stronger than (1).\\ 
(1) implies (2): Let $p \dot\wedge q \in OP(V)$. Then by Theorem \ref{30}, $p^{\prime} \dot\wedge q^{\prime} \in OP(V)$. Now 
\begin{align*}
p \dot\wedge q + p^{\prime} \dot\wedge q^{\prime} &= \frac{1}{2} \{ p + q - \vert p - q \vert + p^{\prime} + q^{\prime} - \vert p^{\prime} - q^{\prime} \vert \} \\
&= \frac{1}{2} \{ 2 e - 2 \vert p - q \vert \} \\ 
&= e - \vert p - q \vert 
\end{align*}
for $p + p^{\prime} = e, q + q^{\prime} = e$ and $p^{\prime} - q^{\prime} = e - p - e + q = q - p$. Similarly, 
\begin{align*}
p \dot\wedge q - p^{\prime} \dot\wedge q^{\prime} &= \frac{1}{2} \{ p + q - \vert p - q \vert - p^{\prime} - q^{\prime} + \vert p^{\prime} - q^{\prime} \vert \} \\
&= \frac{1}{2} \{ 2 p + 2 q - 2 e \} \\ 
&= p + q - e.
\end{align*}
Since $p \dot\wedge q \le p$ and $p^{\prime} \dot\wedge q^{\prime} \le p^{\prime}$, and since $p \perp p^{\prime}$, we have $p \dot\wedge q \perp p^{\prime} \dot\wedge q^{\prime}$. 
Thus $\vert p \dot\wedge q - p^{\prime} \dot\wedge q^{\prime} \vert = p \dot\wedge q + p^{\prime} \dot\wedge q^{\prime}$ so that $\vert p + q - e \vert = e - \vert p - q \vert$. \\ 
(2) implies (3): Let $\vert p - q \vert + \vert p - q^{\prime} \vert = e$. Then 
\begin{eqnarray*}
p \dot\wedge q &=& \frac{1}{2} \{ p + q - \vert p - q \vert \} \\
&=& \frac{1}{2} \{ p + q  - e + \vert p - q^{\prime} \vert \} \\
&=& \frac{1}{2} \{ p - q^{\prime} + \vert p - q^{\prime} \vert \} \in V^+.
\end{eqnarray*}
Similarly, $p \dot\wedge q^{\prime} \in V^+$. Further, as $p \dot\wedge q \le q$, $p \dot\wedge q^{\prime} \le q^{\prime}$ and $q \perp q^{\prime}$, we get that 
$p \dot\wedge q \perp p \dot\wedge q^{\prime}$. Thus by Proposition \ref{27}, $p \dot\wedge q, p \dot\wedge q^{\prime} \in OP(V)$ for 
\begin{eqnarray*}
p \dot\wedge q + p \dot\wedge q^{\prime} &=& \frac{1}{2} \{ p + q  - \vert p - q \vert + p + q^{\prime} - \vert p - q^{\prime} \vert \} \\
&=& \frac{1}{2} \{ 2 p +  e  - e \} = p \in OP(V).
\end{eqnarray*}
\end{proof}
\begin{remark}\label{32}
Let $p, q \in OP(V)$ such that $p \dot\wedge q \in OP(V)$. Then $r \dot\wedge s, r \dot\vee s \in OP(V)$ whenever $r, s \in \{ p, q, p^{\prime}, q^{\prime} \}$.
\end{remark}
\begin{proposition}\label{33}
Let $V$ be an absolute order unit space and let $p, q \in OP(V)$. Then $p \dot\wedge q \in OP(V)$ if and only if $\vert p - q \vert \in OP(V)$ with $\vert p - q \vert \le p + q$. 
\end{proposition}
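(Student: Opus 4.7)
My plan is to prove both implications using Proposition \ref{29} together with the two order-projection-preserving operations already available in this section, namely Corollary \ref{28} (difference of comparable order projections is again in $OP(V)$) and Proposition \ref{27} (orthogonal summands of an order projection with orthogonal pieces are again in $OP(V)$).

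For the forward direction I would start from $p \dot\wedge q \in OP(V)$, invoke Theorem \ref{30} to also get $p \dot\vee q \in OP(V)$, and then use the Proposition \ref{29} identities $|p - q| = p \dot\vee q - p \dot\wedge q$ and $p + q = p \dot\vee q + p \dot\wedge q$. The first, combined with $p \dot\wedge q \le p \dot\vee q$ and Corollary \ref{28}, delivers $|p - q| \in OP(V)$; the second rewrites as $|p - q| = p + q - 2\, p \dot\wedge q$, which is $\le p + q$ since $p \dot\wedge q \ge 0$.

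For the converse, set $r := |p - q|$ and decompose $p - q = x - y$ via Theorem \ref{1}(4), so that $x = (p - q)^+$, $y = (p - q)^-$, $r = x + y$, and $x \perp y$. A short calculation using $x = \tfrac{1}{2}(p - q + |p - q|)$ shows the hypothesis $r \le p + q$ is equivalent to $x \le p$ (and dually to $y \le q$). Then Proposition \ref{27} applied to $x + y = r \in OP(V)$ with $x \perp y$ promotes $x$ and $y$ into $OP(V)$, and Corollary \ref{28} applied to the comparable pair $x \le p$ in $OP(V)$ yields $p - x \in OP(V)$; since $p - x = \tfrac{1}{2}(p + q - |p - q|) = p \dot\wedge q$, the proof is complete. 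The subtle point, and the main obstacle, is that the converse truly needs both hypotheses, which feed into the two mechanisms in a complementary way: without $r \in OP(V)$ Proposition \ref{27} cannot upgrade $x, y$ to order projections, and without $r \le p + q$ Corollary \ref{28} cannot be applied because the comparison $x \le p$ fails. The identity $p - (p - q)^+ = p \dot\wedge q$ is what ties the two strands back to the infimum.
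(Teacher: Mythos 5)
Your proof is correct and follows essentially the same route as the paper: the forward direction is identical (Theorem \ref{30}, the identities of Proposition \ref{29}, Corollary \ref{28}), and in the converse your $x = (p-q)^+$ and $y = (p-q)^-$ are exactly the elements $p - p \dot\wedge q$ and $q - p \dot\wedge q$ that the paper feeds into Proposition \ref{27} and Corollary \ref{28}, with your observation that $\vert p - q \vert \le p + q$ is equivalent to $x \le p$ being the same use of the hypothesis as the paper's remark that $p \dot\wedge q \ge 0$.
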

\begin{proof}
First, let $p \dot\wedge q \in OP(V)$. Then $p \dot\vee q \in OP(V)$ by Theorem \ref{30}. Also, $p \dot\wedge q \le p\dot\vee q$ with $\vert p - q \vert = p \dot\vee q - p \dot\wedge q$ 
by Proposition \ref{29}. Thus $\vert p - q \vert \in OP(V)$ by Corollary \ref{28}. Further, as $p \dot\wedge q \ge 0$, we have $\vert p - q \vert \le p + q$. \\
Conversely, let $\vert p - q \vert \in OP(V)$ with $\vert p - q \vert \le p + q$. Then $p \dot\wedge q \ge 0$ so that $p \dot\wedge q \in [0, e]$ for $p \dot\wedge q \le p \le e$. Next, $(p - p \dot\wedge q) + (q - p \dot\wedge q) = \vert p - q \vert \in OP(V)$. Also, by Proposition \ref{29}, we have 
$(p - p \dot\wedge q) \perp (q - p \dot\wedge q)$. Thus by Proposition \ref{27}, we get that $(p - p \dot\wedge q) \in OP(V)$. Since $p \dot\wedge q \ge 0$, 
we have $(p - p \dot\wedge q) \le p$. Thus by Corollary \ref{28}, we have $p \dot\wedge q = p - (p - p \dot\wedge q) \in OP(V)$.
\end{proof}
\begin{theorem}\label{34}
Let $V$ be an absolute order unit space and let $p, q \in OP(V)$ with $p \dot\wedge q \in OP(V)$. Then $\inf_{OP(V)} \{ u, v \}$ exists and is equal to 
$p \dot\wedge q$.
\end{theorem}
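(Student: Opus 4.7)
The plan is to first observe that $p \dot\wedge q$ is indeed a lower bound for $\{p,q\}$ in $OP(V)$: it lies in $OP(V)$ by hypothesis, and the inequalities $p \dot\wedge q \le p$, $p \dot\wedge q \le q$ are immediate from Proposition \ref{29}(3). The substance of the theorem is to show that any $r \in OP(V)$ satisfying $r \le p$ and $r \le q$ must also satisfy $r \le p \dot\wedge q$.

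The key tool is the four-term orthogonal decomposition of $e$ provided by absolute compatibility. Since $p \dot\wedge q \in OP(V)$, Theorem \ref{31} and Remark \ref{32} guarantee that all four elements $p \dot\wedge q$, $p \dot\wedge q^{\prime}$, $p^{\prime} \dot\wedge q$, $p^{\prime} \dot\wedge q^{\prime}$ lie in $OP(V)$, while Proposition \ref{15}(6) shows that they sum to $e$. This mirrors the C$^{\ast}$-algebraic picture in which two commuting projections split the identity into four pairwise orthogonal pieces; here the pairwise orthogonality of these four pieces comes, when needed, from Proposition \ref{26}.

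Given such an $r$, I would establish separately that $r$ is orthogonal to each of the three ``off-diagonal'' pieces $p \dot\wedge q^{\prime}$, $p^{\prime} \dot\wedge q$, $p^{\prime} \dot\wedge q^{\prime}$. The three arguments are uniform: each such piece is dominated by $q^{\prime}$ or by $p^{\prime}$, so by Proposition \ref{26} it is orthogonal to $q$ or to $p$; and then condition (4) of Definition \ref{2}, combined with $r \le q$ or $r \le p$, propagates this orthogonality down to $r$. Iterating condition (5) of Definition \ref{2} yields
\[
r \perp (p \dot\wedge q^{\prime} + p^{\prime} \dot\wedge q + p^{\prime} \dot\wedge q^{\prime}) = e - p \dot\wedge q,
\]
and since both $r$ and $e - p \dot\wedge q$ lie in $OP(V)$, Proposition \ref{26} delivers $r + (e - p \dot\wedge q) \le e$, i.e.\ $r \le p \dot\wedge q$, which is the desired minimality.

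The main difficulty I anticipate is not conceptual but bookkeeping. Proposition \ref{26} is the bridge between ``$u + v \le e$'' and ``$u \perp v$'' that requires both summands to be order projections, so each invocation must be preceded by an $OP(V)$-membership check (via Theorem \ref{31}, Remark \ref{32}, or Corollary \ref{28}); similarly, the propagation step via Definition \ref{2}(4) needs the domination inequality in precisely the right direction. Once these pieces are assembled carefully, the conclusion is immediate.
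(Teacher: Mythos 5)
Your argument is correct: you establish exactly what the paper's proof establishes, namely that any lower bound $r \in OP(V)$ of $\{p,q\}$ satisfies $r \perp (e - p \dot\wedge q)$, and you then conclude $r \le p \dot\wedge q$ from Proposition \ref{26}, just as the paper does. The only difference is the intermediate decomposition. The paper never touches the four-term splitting: it notes $r \le p$ and $r \le q$ give $r \perp p^{\prime}$ and $r \perp q^{\prime}$ (down-propagation from $p \perp p^{\prime}$, $q \perp q^{\prime}$), adds these to get $r \perp (p^{\prime} + q^{\prime})$, and then, using Theorem \ref{30} to know $p^{\prime} \dot\wedge q^{\prime} \in OP(V)$ and Proposition \ref{29} to write $0 \le p^{\prime} \dot\vee q^{\prime} = p^{\prime} + q^{\prime} - p^{\prime} \dot\wedge q^{\prime} \le p^{\prime} + q^{\prime}$, propagates down once more to get $r \perp p^{\prime} \dot\vee q^{\prime} = e - (p \dot\wedge q)$. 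You instead split $e - p \dot\wedge q$ into the three off-diagonal pieces $p \dot\wedge q^{\prime} + p^{\prime} \dot\wedge q + p^{\prime} \dot\wedge q^{\prime}$ via Theorem \ref{31} and Proposition \ref{15}(6), check $r$ orthogonal to each piece (each check is a legitimate combination of Proposition \ref{26} or Definition \ref{2}(4) with $r \le p$ or $r \le q$), and sum with Definition \ref{2}(5). Both routes are sound and use the same toolkit; the paper's is a bit leaner, needing only Theorem \ref{30} rather than the full compatibility machinery of Theorem \ref{31} and Remark \ref{32}, and performing one orthogonality check per projection instead of three, while your version makes the analogy with the four-corner decomposition induced by two commuting projections in a C$^{\ast}$-algebra explicit, at the cost of the extra $OP(V)$-membership bookkeeping you yourself flag.
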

\begin{proof}
Let $r \le p$ and $r \le q$ for some $r \in OP(V)$. Then $r \perp p^{\prime}$ and $r \perp q^{\prime}$. Thus by the additivity of $\perp$, we get 
$r \perp (p^{\prime} + q^{\prime})$. Since $p \dot\wedge q \in OP(V)$, we have $p^{\prime} \dot\wedge q^{\prime}, p^{\prime} \dot\vee q^{\prime} \in OP(V)$ 
by Theorem \ref{30}. Thus using Proposition \ref{29}, we see that $0 \le p^{\prime} \dot\vee q^{\prime} = p^{\prime} + q^{\prime} - (p^{\prime} \dot\wedge 
q^{\prime}) \le p^{\prime} + q^{\prime}$ so that $r \perp (p^{\prime} \dot\vee q^{\prime})$. As $r, p^{\prime} \dot\vee q^{\prime} \in OP(V)$, by Proposition 
\ref{26}, we have $r + p^{\prime} \dot\vee q^{\prime} \le e$. But $p^{\prime} \dot\vee q^{\prime} = e - (p \dot\wedge q)$ so that $r \le p \dot\wedge q$. 
Now as $p \dot\wedge q \le p, q$, we conclude that $\inf_{OP(V)} \{ u, v \}$ exists and is equal to $p \dot\wedge q$.
\end{proof}
\begin{corollary}\label{35}
Let $V$ be an absolute order unit space and let $p, q \in OP(V)$ with $p \dot\vee q \in OP(V)$. Then $\sup_{OP(V)} \{ p, q \}$ exists and is equal to 
$p \dot\vee q$.
\end{corollary}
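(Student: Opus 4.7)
The plan is to deduce this corollary from Theorem \ref{34} via the complement map $r \mapsto r' = e - r$, which is an order-reversing involution on $OP(V)$. Since $p \dot\vee q \in OP(V)$ by hypothesis, Theorem \ref{30} immediately yields $p' \dot\wedge q' \in OP(V)$. Applying Theorem \ref{34} to the pair $\{p', q'\}$, the infimum $\inf_{OP(V)}\{p', q'\}$ exists and equals $p' \dot\wedge q'$.

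Next I would record the identity $p' \dot\wedge q' = e - (p \dot\vee q)$, which follows from a one-line computation using $u \dot\vee v = \frac{1}{2}(u + v + \vert u - v \vert)$, $u \dot\wedge v = \frac{1}{2}(u + v - \vert u - v \vert)$, and $\vert p' - q' \vert = \vert p - q \vert$. Given any $r \in OP(V)$ with $p \le r$ and $q \le r$, the element $r' \in OP(V)$ satisfies $r' \le p'$ and $r' \le q'$, hence $r' \le p' \dot\wedge q' = e - (p \dot\vee q)$ by the previous step. Rearranging gives $r \ge p \dot\vee q$.

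Finally, one observes that $p \dot\vee q$ itself is an upper bound for $\{p, q\}$, since $u \le u \dot\vee v$ and $v \le u \dot\vee v$ hold in any space satisfying the conditions of Theorem \ref{1} (as recorded in the proof of that theorem). Combined with the hypothesis $p \dot\vee q \in OP(V)$ and the bound $r \ge p \dot\vee q$ for every upper bound $r \in OP(V)$, this shows $\sup_{OP(V)}\{p, q\}$ exists and equals $p \dot\vee q$. The argument is essentially formal once Theorems \ref{30} and \ref{34} are available; the only step requiring care is verifying the complement identity $p' \dot\wedge q' = e - (p \dot\vee q)$, which I do not expect to be a serious obstacle.
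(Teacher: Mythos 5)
Your proposal is correct and follows exactly the route the paper intends: the corollary is stated without proof as the dual of Theorem \ref{34}, obtained by passing to complements, using Theorem \ref{30} to get $p^{\prime} \dot\wedge q^{\prime} \in OP(V)$ and the identity $p^{\prime} \dot\wedge q^{\prime} = e - (p \dot\vee q)$ together with the order-reversing involution $r \mapsto e - r$ on $OP(V)$. Your verification of the complement identity and of $p, q \le p \dot\vee q$ is accurate, so there is nothing to add.
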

\begin{proposition}
Let $V$ be an absolute order unit space and let $p, q \in OP(V)$ with $p \dot\wedge q \in OP(V)$ 
so that $r = \vert p - q \vert \in OP(V)$. Then the set $S := \{ 0, e, p, q, r, p^{\prime}, q^{\prime}, 
r^{\prime} \}$ is closed under the binary operation $(u, v) \mapsto \vert u - v \vert$.
\end{proposition}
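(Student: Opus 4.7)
The plan is to decompose every element of $S$ as a $\perp$-sum of four atomic order projections and then read off each $\vert u - v \vert$ by cancellation. Set
\begin{align*}
a := p \dot\wedge q, \qquad b := p \dot\wedge q^{\prime}, \qquad c := p^{\prime} \dot\wedge q, \qquad d := p^{\prime} \dot\wedge q^{\prime}.
\end{align*}
Since $p \dot\wedge q \in OP(V)$ by hypothesis, Theorem \ref{31} says that $p$ is absolutely compatible with $q$, and Remark \ref{32} gives $a, b, c, d \in OP(V)$. The six equivalent formulations in Proposition \ref{15} then deliver the full system
\begin{align*}
a + b = p, \quad c + d = p^{\prime}, \quad a + c = q, \quad b + d = q^{\prime}, \quad a + b + c + d = e.
\end{align*}
Computing $r = \vert p - q \vert = \vert (a + b) - (a + c) \vert = \vert b - c \vert$ and using the orthogonality $b \perp c$ (established below) will give $r = b + c$ and hence $r^{\prime} = a + d$. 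So every element of $S$ is exhibited as a sum of a subset of $\{a, b, c, d\}$.

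Next I verify that the four atoms are pairwise $\perp$. Each pair is separated either by $\{p, p^{\prime}\}$ or by $\{q, q^{\prime}\}$: for instance, $a \le q$ and $b \le q^{\prime}$, and since $q \perp q^{\prime}$, two applications of the hereditary axiom in Definition \ref{2}(4) (first, with $u = q^{\prime}$, $v = q$, $w = a$, to obtain $q^{\prime} \perp a$; then with $u = a$, $v = q^{\prime}$, $w = b$, to obtain $a \perp b$) give $a \perp b$. The remaining five pairs are handled identically. Thus $\{a, b, c, d\}$ is a pairwise orthogonal family of order projections summing to $e$.

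With the atomic decomposition in place, closure of $S$ under $(u, v) \mapsto \vert u - v \vert$ reduces to a short finite table. The trivial pairs give $\vert u - 0 \vert = u$, $\vert u - e \vert = u^{\prime}$, $\vert u - u \vert = 0$, and $\vert u - u^{\prime} \vert = e$, while the identity $\vert u^{\prime} - v^{\prime} \vert = \vert u - v \vert$ halves the remaining work. For any other pair, I cancel the common atoms and invoke pairwise orthogonality of what remains; for example, $\vert p - r \vert = \vert (a + b) - (b + c) \vert = \vert a - c \vert = a + c = q$, and $\vert p - q^{\prime} \vert = \vert (a + b) - (b + d) \vert = \vert a - d \vert = a + d = r^{\prime}$. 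Every such computation lands in $\{0, p, p^{\prime}, q, q^{\prime}, r, r^{\prime}, e\}$.

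The only delicate point in the whole argument is the second step, where the hereditary property of $\perp$ in an absolutely ordered space must be invoked twice to promote the orthogonality of the pairs $(p, p^{\prime})$ and $(q, q^{\prime})$ to the orthogonality of the four atoms; everything else is Boolean bookkeeping in the commutative substructure generated by $a, b, c, d$.
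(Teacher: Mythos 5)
Your proof is correct and follows essentially the same route as the paper's: the paper also works with the orthogonal decomposition $p = p \dot\wedge q + p \dot\wedge q^{\prime}$ (your $a + b$), derives the orthogonality of such summands from $q \perp q^{\prime}$ via the hereditary property of $\perp$, and computes a representative value such as $\vert q - r \vert = p$ by the same cancellation, leaving the remaining pairs to ``a similar way.'' Your write-up simply makes the full atomic family $\{a, b, c, d\}$ and the resulting bookkeeping explicit, which is a complete version of the same argument rather than a different one.
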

\begin{proof}
Since $p \dot\wedge q \in OP(V)$, we have that $\vert e - p - q \vert = e - \vert p - q \vert = e - r$ and 
that $p \dot\wedge q^{\prime}, p^{\prime} \dot\wedge q, p^{\prime} \dot\wedge q^{\prime} \in OP(V)$. 
Also $p \dot\wedge q + p \dot\wedge q^{\prime} = p$. Since $p \dot\wedge q \le q$ and since 
$p \dot\wedge q^{\prime} \le q^{\prime}$, we have $p \dot\wedge q \perp p \dot\wedge q^{\prime}$ 
so that $\vert p \dot\wedge q - p \dot\wedge q^{\prime} \vert = p \dot\wedge q + 
p \dot\wedge q^{\prime} = p$. also 
$$2 ( p \dot\wedge q - p \dot\wedge q^{\prime} ) = (p + q - r) - (p + q^{\prime} - r^{\prime} ) = 2 (q - r)$$ 
so that $p = \vert q - r \vert$. Now, by symmetry we can get $q = \vert p - r \vert$. In a similar way, 
we can calculate $\vert u - v \vert$ for any $u, v \in S$ to complete the proof.
\end{proof}

\section{Expanding the scope}
In this section, we shall examine absolute compatibility of an order projection with a general positive element. First we note that 
order projections in an absolute order unit space have the following `norming' property. 

\begin{proposition}\label{19}
Let $V$ be an absolute order unit space.
\begin{enumerate}
\item If $p \in OP(V)$, then $p$ has the absolute order unit property in $V$. Dually, $e - p$ also has the absolute order unit property in $V$.
\item Let $u \in [0, e]$ and assume that $u$ and $e - u$ have the absolute order unit property in $V$. Then $u \perp_{\infty}^a (e - u)$. 
\end{enumerate}
\end{proposition}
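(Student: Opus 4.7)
I would prove (2) first, as it is the more direct of the two parts. Fix $0 \le u_1 \le u$ and $0 \le v_1 \le e-u$. Then $u_1 \in V_u$, so the absolute order unit property of $u$ gives $u_1 = \vert u_1 \vert \le \Vert u_1 \Vert u$, and dually $v_1 \le \Vert v_1 \Vert (e-u)$. Adding these inequalities,
\[
u_1 + v_1 \;\le\; \Vert u_1 \Vert u + \Vert v_1 \Vert (e-u) \;\le\; \max\{\Vert u_1 \Vert, \Vert v_1 \Vert\} \bigl(u + (e-u)\bigr) = \max\{\Vert u_1 \Vert, \Vert v_1 \Vert\}\, e,
\]
so $\Vert u_1 + v_1 \Vert \le \max\{\Vert u_1 \Vert, \Vert v_1 \Vert\}$. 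The reverse inequality is automatic from $0 \le u_1, v_1 \le u_1 + v_1$ together with $(O.\infty.1)$. Hence $u_1 \perp_{\infty} v_1$; since $u_1, v_1$ were arbitrary this gives $u \perp_{\infty}^a (e-u)$.

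For (1), let $p \in OP(V)$ and $v \in V_p$ with $\pm v \le kp$. By scaling I may assume $k = 1$ and aim to show $\vert v \vert \le p$ (the general bound then follows by homogeneity, and as a consequence $\Vert v \Vert_p = \Vert v \Vert_e$). The pivotal observation is the following sub-lemma: \emph{for every $w \in V^+$ with $w \le \lambda p$, one has $w \perp (e-p)$.} This follows because $p \perp (e-p)$ by hypothesis, and by Theorem \ref{1}(4)(c) together with axioms (4) and (5) of Definition \ref{2}, the orthogonality relation transfers to $k(e-p) \perp p$ for all $k > 0$, and then axiom (4) applied ``downward'' yields $w \perp (e-p)$ for any $0 \le w \le \lambda p$.

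Applying this sub-lemma to $v + p$ and $p - v$, both of which lie in $[0, 2p]$, I get $(e-p) \perp (v+p)$ and $(e-p) \perp (p-v)$. Then axiom (5) of Definition \ref{2}, in its ``difference'' form, yields
\[
\bigl\vert (e-p) - \bigl((v+p) - (p-v)\bigr) \bigr\vert = (e-p) + \bigl\vert (v+p) - (p-v) \bigr\vert,
\]
that is, $\vert (e-p) - 2v \vert = (e-p) + 2\vert v \vert$. From here I would aim to conclude $(e-p) \perp \vert v \vert$: using the Jordan-type decomposition $v = v^+ - v^-$ with $v^+ \perp v^-$ (Theorem \ref{1}(4)) and another application of axiom (5), one transfers the orthogonality from the signed $2v$ to the positive $v^+$ and $v^-$ individually, and hence to their sum $v^+ + v^- = \vert v \vert$. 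Once $(e-p) \perp \vert v \vert$ is established, Proposition \ref{6} gives $\Vert \vert v \vert + (e-p) \Vert = \max\{\Vert v \Vert_e, 1\} = 1$, and the positivity of $\vert v \vert + (e-p)$ together with the order unit norm forces $\vert v \vert + (e-p) \le e$, i.e.\ $\vert v \vert \le p$, as desired. Dually, $e - p \in OP(V)$ and the same argument gives the absolute order unit property of $e-p$.

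The main obstacle is the bootstrap from the equation $\vert (e-p) - 2v \vert = (e-p) + 2\vert v \vert$ to the genuine orthogonality $(e-p) \perp \vert v \vert$. In a vector lattice this would be immediate by distributivity, but in a general absolutely ordered space one has to exploit axioms (4) and (5) of Definition \ref{2} carefully, working via the Jordan components $v^\pm$; the challenge is that $v^\pm$ need not \emph{a priori} be elements of $V_p^+$, so their membership must be argued in parallel.
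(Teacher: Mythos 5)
Your part (2) is exactly the paper's argument (the paper normalizes $v$ and $w$ rather than taking a maximum, but the estimate is the same), and your part (1) follows the paper's route: you write $v$ as a difference of two positive elements dominated by a multiple of $p$ (your $v+p$, $p-v$; the paper's $\frac{1}{2}(kp+v)$, $\frac{1}{2}(kp-v)$), push $p \perp (e-p)$ down to them using homogeneity of $\vert\cdot\vert$ and Definition \ref{2}(4), and then invoke Definition \ref{2}(5). The trouble is that the step you yourself flag as ``the main obstacle'' is left unresolved, and your proposed repair does not work as described. The literal identity $\vert (e-p) - 2v \vert = (e-p) + 2\vert v \vert$ is strictly weaker than $(e-p) \perp \vert v \vert$: for any $u \in V^+$ the element $x = -u$ satisfies $\vert u - x \vert = u + \vert x \vert$ while $u \perp \vert x \vert$ fails, so no purely formal manipulation of that identity can yield the orthogonality. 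Your plan to transfer orthogonality to $v^{+}$ and $v^{-}$ via the hereditary axiom is circular, because to apply Definition \ref{2}(4) to $v^{\pm}$ you would need a bound of $v^{+}$ or $v^{-}$ by a multiple of $p$, which is essentially what is being proved. The paper avoids the issue by reading the conclusion of Definition \ref{2}(5) as the orthogonality relation $u \perp \vert v \pm w \vert$ (this is how (5) is used throughout, e.g.\ in Remark \ref{7}(4) and in the paper's own proof here), so that $(e-p) \perp (v+p)$ and $(e-p) \perp (p-v)$ give $(e-p) \perp 2\vert v \vert$, hence $(e-p) \perp \vert v \vert$, in one stroke; you must either adopt that reading or supply a genuinely new argument, and your sketch does neither.

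There is a second gap in your endgame. Reducing to $\pm v \le p$, concluding $\vert v \vert \le p$, and then invoking ``homogeneity'' only yields $\vert v \vert \le kp$ for every $k$ with $\pm v \le kp$, i.e.\ $\vert v \vert \le \Vert v \Vert_p\, p$ with the order unit norm of $(V_p, p)$. The absolute order unit property requires $\vert v \vert \le \Vert v \Vert\, p$ with the norm of $(V, e)$, and the equality $\Vert v \Vert_p = \Vert v \Vert_e$ on $V_p$ is a \emph{consequence} of that property, not something you may assume: for a general positive $u \le e$ (e.g.\ $u(t) = t$ in $C[0,1]$) the two norms genuinely differ, so your parenthetical ``as a consequence'' is circular at that point of the proof. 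The repair is the paper's last step: from $\vert v \vert \perp (e-p)$ and Proposition \ref{6} one gets $\bigl\Vert \Vert v \Vert^{-1} \vert v \vert + (e-p) \bigr\Vert = 1$, and then either the paper's state-space computation or your own observation that a positive element of norm at most one is dominated by $e$ gives $\Vert v \Vert^{-1} \vert v \vert + (e-p) \le e$, i.e.\ $\vert v \vert \le \Vert v \Vert\, p$ directly, with no homogeneity step. With these two corrections your argument coincides with the paper's; part (2) and your sub-lemma need no change.
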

\begin{proof}
(1): First, assume that $p \in OP(V)$. Let $k p \pm v \in V^+$ for some $k > 0$. Set $v_1 = \frac{1}{2} (k p + v)$ and $v_2 = \frac{1}{2} (k p - v)$. Then 
$v_1, v_2 \in V^+$ with $v_1 - v_2 = v$ and $v_1 + v_2 = k p$. As $p \in OP(V)$, we have $p \perp (e - p)$ and consequently, $v_1 \perp (e - p)$ and 
$v_2 \perp (e - p)$. Thus $\vert v \vert = \vert v_1 - v_2 \vert \perp (e - p)$. Since $V$ is an absolute order unit space, we further have $\vert v \vert \perp_{\infty}^a 
(e - p)$ so that $\Vert \Vert \vert v \vert \Vert^{-1} \vert v \vert + (e - p) \Vert = 1$. Now, for $f \in S(V)$, we have  
$$1 = \left\Vert \Vert v \Vert^{-1} \vert v \vert + (e - p) \right\Vert \ge f \left( \Vert v \Vert^{-1} \vert v \vert + (e - p) \right) = 1 - f(p - \Vert v \Vert^{-1} \vert v \vert ).$$
Thus $f(p - \Vert v \Vert^{-1} \vert v \vert ) \ge 0$ for all $f \in S(V)$ so that $\vert v \vert \le \Vert v \Vert p$. In other words, $p$ has the absolute order unit property 
in $V$.

(2): Conversely, let $u \in [0, e]$ and assume that $u$ and $e - u$ have the (absolute) order unit property in $V$. Let $0 \le v \le u$ and $0 \le w \le e - u$. 
By the order unit property of $u$ and $e - u$ we have $v \le \Vert v \Vert u$ and $w \le \Vert w \Vert (e - u)$. Thus 
$$0 \le \Vert v \Vert^{-1} v + \Vert w \Vert^{-1} w \le u + (e - u) = e$$
so that $\Vert \Vert v \Vert^{-1} v + \Vert w \Vert^{-1} w \Vert \le 1$. Now 
$$1 = \left\Vert \Vert v \Vert^{-1} v \right\Vert \le \left\Vert \Vert v \Vert^{-1} v + \Vert w \Vert^{-1} w \right\Vert \le 1$$
so that $u \perp_{\infty}^a (e - u)$.
\end{proof}
\begin{remark}\label{20} 
\begin{enumerate}
\item Let an absolute order unit space $(V, e)$ satisfy $(O. \perp_{\infty}.2)$. Then $p \in OP(V)$ if and only if $p \in [0, e]$ and both $p$ and $e - p$ satisfy the 
absolute order unit property. 
\item Let $(V, e)$ be an order unit space and let $u \in V^+$ has the order unit property in $V$. Then $(V_u, u)$ is an order unit space. In this case, 
$u$ and $e$ determine the same norm in $V_u$.Thus $(V_u, u)$ is an `order unit ideal' of $V$.
\item Let $(V, e)$ be an absolute order unit space and assume that $u \in V^+$ has the absolute order unit property in $V$. Then $(V_u, u)$ is an 
absolute order unit space. In particular, if $p \in OP(V)$, then $(V_p, p)$ is an absolute order unit space and in this case, 
$$OP(V_p) = \{ q \in OP(V): q \le p \} = OP(V) \cap V_p.$$
\end{enumerate}
\end{remark}
\begin{proposition}\label{18a}
Let $(V, e)$ be an order unit space and let $u \in [0, e]$. If $u$ has the order unit property in $V$, then 
it is an extreme point in $[0, e]$.
\end{proposition}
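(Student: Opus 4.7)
The plan is to test extremality directly. Suppose $u = \tfrac{1}{2}(v_1 + v_2)$ with $v_1, v_2 \in [0, e]$ and set $x := v_1 - u = u - v_2$, so that $v_1 = u + x$ and $v_2 = u - x$; the goal is to show $x = 0$. Unpacking the four inequalities $0 \le v_1, v_2 \le e$ in terms of $u$ and $x$ collapses to exactly two symmetric bounds: $\pm x \le u$ and $\pm x \le e - u$.

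The first bound places $x$ in $V_u$, so the order unit property of $u$ upgrades it to $\pm x \le \alpha\,u$, where $\alpha := \Vert x \Vert_e$. At this point I would take a convex combination of the two available bounds: for any $\theta \in [0,1]$,
\[
\pm x \;\le\; \theta\,\alpha\,u + (1-\theta)(e - u) \;=\; (1-\theta)\,e + \bigl(\theta(1+\alpha) - 1\bigr)\,u.
\]
The key move is to choose $\theta = 1/(1+\alpha)$, which cancels the coefficient of $u$ and leaves $\pm x \le \frac{\alpha}{1+\alpha}\,e$, i.e.\ $\Vert x \Vert_e \le \alpha/(1+\alpha)$. Substituting $\alpha = \Vert x \Vert_e$ forces $\alpha(1+\alpha) \le \alpha$, whence $\alpha^2 \le 0$ and so $\alpha = 0$. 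Therefore $x = 0$, giving $v_1 = v_2 = u$ and proving extremality.

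The only non-routine step is spotting the value of $\theta$ that cancels the $u$-coefficient; everything else is a mechanical unpacking of the definitions. Conceptually, the order unit property bounds $\pm x$ in terms of $u$, while $v_1, v_2 \le e$ bounds $\pm x$ in terms of $e - u$; the convex combination is the device that lets these two complementary pieces of information be played off each other and force $x$ to vanish.
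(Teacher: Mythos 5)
Your argument is correct, but it runs along a different track from the paper's. You reduce to the midpoint criterion (testing $u=\tfrac12(v_1+v_2)$, which is indeed equivalent to extremality in a convex set), apply the order unit property to the \emph{perturbation} $x=v_1-u$, and then play the bound $\pm x\le \Vert x\Vert_e\,u$ against $\pm x\le e-u$ via the convex combination with $\theta=1/(1+\alpha)$, obtaining the self-improving estimate $\Vert x\Vert_e\le \Vert x\Vert_e/(1+\Vert x\Vert_e)$ and hence $x=0$; every step checks out, including the unpacking of $0\le v_1,v_2\le e$ into the two symmetric bounds. The paper instead works with a general convex combination $u=\alpha v+(1-\alpha)w$ and applies the order unit property to the \emph{positive} element $\alpha v$ (which lies in $[0,u]\subset V_u$), getting $v\le\Vert v\Vert u\le u$ and likewise $w\le u$; since $\alpha(u-v)+(1-\alpha)(u-w)=0$ with both summands in $V^+$, properness of the cone forces $v=u=w$. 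The trade-off: the paper's route is shorter and uses the hypothesis $v,w\le e$ only through $\Vert v\Vert,\Vert w\Vert\le 1$, finishing with properness of $V^+$; yours uses $v_i\le e$ quantitatively (through $e-u$) and replaces the properness argument by a norm contraction, at the cost of the nonobvious choice of $\theta$ and the (standard, but worth stating) reduction of extremality to the midpoint test.
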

\begin{proof}
Let $u = \alpha v + (1 - \alpha ) w$ for some $v, w \in [0, e]$ and $0 < \alpha < 1$. Then $0 \le \alpha v 
\le u$. Since $u$ has the order unit property in $V$, we get, $v \le \Vert v \Vert u \le u$. Similarly, 
$0 \le w \le u$. Set $u - v = v_1$ and $u - w = w_1$. Then 
$$0 = u - (\alpha v + (1 - \alpha ) w) = \alpha v_1 + (1 - \alpha ) w_1.$$
Since $V^+$ is proper and $0 < \alpha < 1$, we get $v_1 = 0 = w_1$ so that $v = u = w$.
\end{proof}

The absolute compatibility between an order projection $p \in OP(V)$ and an arbitrary element $u \in [0, e]$ is related to 
$V_p + V_{p^{\prime}}$ which we describe below. 
Let $(V_1, e_1)$ and $(V_2, e_2)$ be any two absolute order unit spaces. Consider
$$V = V_1 \times V_2; \quad V^+ = V_1^+ \times V_2^+; ~\textrm{and}~ e = (e_1, e_2).$$
Then $(V, e)$ becomes an absolute order unit space in a canonical way. Further, $V$ is unitally and 
isometrically order isomorphic to $V_1 \oplus_{\infty} V_2$.
\begin{proposition}\label{21}
Let $(V, e)$ be an absolute order unit normed space and let $p, q \in OP(V)$ with $p + q \le e$. Then 
$V_p + V_q$ is an absolute order smooth $\infty$-normed subspace of $(V, e)$ and is isometrically 
order isomorphic to $V_p \oplus_{\infty} V_q$. In particular,for $x \in V_p$ and $y \in V_q$, we have 
$(x + y)^+ = x^+ + y^+$ and $(x + y)^- = x^- + y^-$ so that $\vert x + y \vert = \vert x \vert + \vert y \vert$.
\end{proposition}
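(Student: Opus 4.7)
The plan is to produce the linear bijection $\Phi : V_p \oplus_\infty V_q \to V_p + V_q$ defined by $\Phi(x, y) = x + y$ and verify that it is both an isometric and an order isomorphism. Everything hinges on establishing the identity $\vert x + y \vert = \vert x \vert + \vert y \vert$ for $x \in V_p$ and $y \in V_q$; once this is in hand, the decomposition formulas for $(x+y)^{\pm}$, the $\infty$-norm equality, injectivity, and the positivity test all follow cleanly.

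The main step is to lift orthogonality of $p$ and $q$ to $\vert x \vert$ and $\vert y \vert$. From $p + q \le e$ and Proposition \ref{26} we get $p \perp q$. Setting $M = \max\{\Vert x \Vert, \Vert y \Vert\}$, Theorem \ref{1}(4)(c) yields $Mp \perp Mq$, while the absolute order unit property of $p$ gives $\vert x \vert \le \Vert x \Vert p \le Mp$ and similarly $\vert y \vert \le Mq$. Two applications of condition (4) of Definition \ref{2}, combined with symmetry of $\perp$, produce $\vert x \vert \perp \vert y \vert$; the same device applied to $0 \le x^\pm \le \vert x \vert$ and $0 \le y^\pm \le \vert y \vert$ yields $x^\sigma \perp y^\tau$ for every choice of signs $\sigma, \tau \in \{+, -\}$. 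Assembling the decomposition is then routine: condition (5) of Definition \ref{2}, applied first to $\{x^+ \perp x^-,\, x^+ \perp y^-\}$ and then to $\{y^+ \perp x^-,\, y^+ \perp y^-\}$, gives $x^+ \perp (x^- + y^-)$ and $y^+ \perp (x^- + y^-)$; a third application produces $(x^+ + y^+) \perp (x^- + y^-)$. The uniqueness clause of Theorem \ref{1}(4)(d) then forces $(x+y)^\pm = x^\pm + y^\pm$, and hence $\vert x + y \vert = \vert x \vert + \vert y \vert$.

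The remaining statements fall out. Since $V$ satisfies $(O.\perp_\infty.1)$, $\vert x \vert \perp \vert y \vert$ upgrades to $\vert x \vert \perp_\infty^a \vert y \vert$, and Proposition \ref{6} gives $\Vert x + y \Vert = \Vert \vert x \vert + \vert y \vert \Vert = \max\{\Vert x \Vert, \Vert y \Vert\}$, so $\Phi$ is an isometry onto $V_p + V_q$; the same identity forces injectivity, for $x + y = 0$ implies $\vert x \vert + \vert y \vert = 0$ and therefore $x = y = 0$. For the cone, $x + y \in V^+$ makes $x^- + y^- = (x+y)^- = 0$, so $x, y \in V^+$, and the converse is trivial. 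Finally, $V_p + V_q$ inherits $(O.\infty.1)$, $(O.\perp_\infty.1)$, and $(O.\perp_\infty.2)$ from $V$, and is stable under $\vert \cdot \vert$ because $\vert x \vert \in V_p$ and $\vert y \vert \in V_q$. The only delicate step is the initial propagation of orthogonality to $\vert x \vert$ and $\vert y \vert$; once this is secured, conditions (4) and (5) of Definition \ref{2} let the rest of the argument proceed almost mechanically.
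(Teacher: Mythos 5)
Your proof is correct and follows essentially the same route as the paper: both arguments reduce everything to showing that $x^{\pm}$ and $y^{\pm}$ are mutually orthogonal, use additivity together with the uniqueness of the orthogonal decomposition to obtain $(x+y)^{\pm}=x^{\pm}+y^{\pm}$ and $\vert x+y\vert=\vert x\vert+\vert y\vert$, and then compute $\Vert x+y\Vert$ as the maximum of $\Vert x\Vert$ and $\Vert y\Vert$. The only minor difference is that you propagate the orthogonality on the order side, via the absolute order unit property of $p, q$ (Proposition \ref{19}) and conditions (4)--(5) of Definition \ref{2}, whereas the paper passes through $p \perp_{\infty}^{a} q$ and the additivity of $\perp_{\infty}^{a}$, returning to $\perp$ via $(O.\perp_{\infty}.2)$; your version simply makes that implicit conversion explicit.
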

\begin{proof}
Let $u \in V_p$ and $v \in V_q$. Then $u^+, u^- \in V_p^+$ and $v^+, v^- \in V_q^+$. As $p + q \le e$, 
we have $p \perp_{\infty}^a q$ and consequently, we may conclude that $u^+, u^-, v^+$ and $v^-$ are 
absolutely $\infty$-orthogonal to each other. Now, by the additivity, we have $(u^+ + v^+) 
\perp_{\infty}^a (u^- + v^-)$. Thus as $u + v = (u^+ + v^+) - (u^- + v^-)$, we get that $(u + v)^+ = u^+ + 
v^+; (u + v)^- =  u^- + v^-$ so that $\vert u + v \vert = \vert u \vert + \vert v \vert$. Therefore, $V_p + V_q$ 
is an absolute ordered space. It also follows that $V_p \cap V_q = \{ 0 \}$. Finally, for $u \in V_p$ and 
$v \in V_q$ we note that
\begin{eqnarray*}
\Vert u + v \Vert &=& \max \{ \Vert (u + v)^+ \Vert, \Vert (u + v)^- \Vert \} \\
&=& \max \{ \Vert u^+ + v^+ \Vert, \Vert u^- + v^- \Vert \} \\
&=& \max \{ \Vert u^+ \Vert, \Vert v^+ \Vert, \Vert u^- \Vert, \Vert v^- \Vert \} \\
&=& \max \{ \Vert u \Vert, \Vert v \Vert \} 
\end{eqnarray*}
which completes the proof.
\end{proof}

\begin{theorem}\label{23}
Let $(V, e)$ be an absolute order unit normed space and let 
$p, q \in OP(V)$ with $p + q \le e$. Then for $w \in V$, the following statements are equivalent:
\begin{enumerate}
\item $w \in V_p^+ + V_q^+$ with $\Vert w \Vert \le 1$;
\item $w = p \dot\wedge w + q \dot\wedge w$;
\item $\vert p - w \vert + \vert q - w \vert = p + q$.
\end{enumerate}
\end{theorem}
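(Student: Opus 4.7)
The plan is to exploit Proposition \ref{21}: since $p+q\le e$, the subspace $V_p+V_q$ is realised as the $\ell^{\infty}$-sum $V_p\oplus_{\infty}V_q$, so for $x\in V_p$ and $y\in V_q$ one has the componentwise formulas $\vert x+y\vert=\vert x\vert+\vert y\vert$ and $\Vert x+y\Vert=\max\{\Vert x\Vert,\Vert y\Vert\}$. I would first recast (1) in order-interval form: because $p$ and $q$ have the absolute order unit property (Proposition \ref{19}), any $w_1\in V_p^+$ with $\Vert w_1\Vert\le 1$ satisfies $w_1\le p$, and combined with the max-formula this shows that (1) is equivalent to writing $w=w_1+w_2$ with $0\le w_1\le p$ and $0\le w_2\le q$.

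With this reformulation in hand, both $(1)\Rightarrow(2)$ and $(1)\Rightarrow(3)$ follow from a single computation. Writing $p-w=(p-w_1)+(-w_2)$ with $p-w_1\in V_p^+$ and $-w_2\in V_q$, Proposition \ref{21} gives $\vert p-w\vert=(p-w_1)+w_2$, and symmetrically $\vert q-w\vert=w_1+(q-w_2)$. Adding yields $\vert p-w\vert+\vert q-w\vert=p+q$, which is (3); and substituting into $p\dot\wedge w=\tfrac12(p+w-\vert p-w\vert)$ collapses to $p\dot\wedge w=w_1$ (and $q\dot\wedge w=w_2$ by symmetry), giving (2).

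The equivalence $(2)\Leftrightarrow(3)$ is a one-line algebraic identity: summing the definitions of $\dot\wedge$,
\begin{equation*}
p\dot\wedge w+q\dot\wedge w=\tfrac12\bigl(p+q+2w-\vert p-w\vert-\vert q-w\vert\bigr),
\end{equation*}
so $w=p\dot\wedge w+q\dot\wedge w$ holds precisely when $\vert p-w\vert+\vert q-w\vert=p+q$. To close the cycle I would derive $(2)\Rightarrow(1)$: set $w_1:=p\dot\wedge w$ and $w_2:=q\dot\wedge w$. The identity $w-p\dot\wedge w=\tfrac12(\vert p-w\vert-(p-w))=(w-p)^+$ is nonnegative by condition~(2) of Definition~\ref{2}, so (2) forces $w_2=(w-p)^+\ge 0$, and symmetrically $w_1\ge 0$. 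Since $w_1\le p$ and $w_2\le q$ always hold, the pair $(w_1,w_2)$ lies in $[0,p]\times[0,q]$, and this gives (1). The whole argument rests on Proposition \ref{21}; that invocation is the conceptual pivot, since once the componentwise absolute value formula is in hand each implication reduces to short bookkeeping.
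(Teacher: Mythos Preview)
Your proof is correct and follows essentially the same strategy as the paper: both use Proposition~\ref{21} to handle the direction starting from (1), and both observe that (2)$\Leftrightarrow$(3) is the trivial algebraic identity you wrote down. The only difference is in closing the cycle: the paper proves (3)$\Rightarrow$(1) by taking the orthogonal decompositions $p-w=u_1-u_2$ and $q-w=v_1-v_2$ and solving the resulting linear system to exhibit $w=u_2+v_2$ with $v_2\le p$, $u_2\le q$, whereas you close via (2)$\Rightarrow$(1) using the universal identity $w-p\dot\wedge w=(w-p)^+\ge 0$; your route is slightly slicker but not substantively different, since the paper's $u_2,v_2$ are exactly your $(w-p)^+,(w-q)^+$.
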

\begin{proof}
(1) implies (2): Let $w \in V_p^+ + V_q^+$ with $\Vert w \Vert \le 1$. By Proposition \ref{21}, there exists 
a unique pair $w_p \in V_p^+$ and $w_q \in V_q^+$ such that $w = w_p + w_q$. As $\Vert w \Vert \le 1$, 
we get that $w_p \le p$ and $w_q \le q$ so that $0 \le p - w_p \le p$. Since $p + q \le e$, we have 
$p \perp_{\infty}^a q$ and consequently, $(p - w_p) \perp_{\infty}^a w_q$. Thus
$$\vert p - w \vert = \vert p - w_p - w_q \vert = p - w_p + w_q.$$
Therefore,
$$u \dot\wedge w = \frac{1}{2} \left( p + w - \vert p - w \vert \right) = w_p.$$
Similarly, $q \dot\wedge w = w_q$ so that $w = p \dot\wedge w + q \dot\wedge w$.

(2) implies (3): Let $w = p \dot\wedge w + q \dot\wedge w$. Then 
$$2 w = p + w - \vert p - w \vert + q + w - \vert q - w \vert$$ 
so that $\vert p - w \vert + \vert q - w \vert = p + q$.

(3) implies (1): Finally, let $\vert p - w \vert + \vert q - w \vert = p + q$. Consider the $\perp_{\infty}^a$- 
decompositions $p - w = u_1 - u_2$ and $q - w = v_1 - v_2$ in $V^+$. Then $\vert p - w \vert = u_1 + 
u_2$ and $\vert q - w \vert = v_1 + v_2$. Now it follows that 
$$u_1 + u_2 + v_1 + v_2 = p + q$$ 
and 
$$u_1 - u_2 + v_1 - v_2 = p + q - 2 w.$$ 
Therefore, $w = u_2 + v_2$ and consequently, $p = u_1 + v_2$ and $q = v_1 + u_2$. Now, as 
$0 \le v_2 \le p$ and $0 \le u_2 \le q$, we have $w \in V_p^+ + V_q^+$. Further as $p \perp_{\infty}^a 
q$, we have $u_2 \perp_{\infty}^a v_2$. Thus 
$$\Vert w \Vert = \Vert u_2 + v_2 \Vert = \max \{ \Vert u_2 \Vert, \Vert v_2 \Vert \} \le \max \{ \Vert p 
\Vert, \Vert q \Vert \} \le 1.$$
\end{proof} 
\begin{remark}
It follows from the proof of Theorem \ref{23} that if $p, q \in OP(V)$ with $p + q \le e$ and if 
$w \in V_p^+ + V_q^+$ with $\Vert w \Vert \le 1$, then the $p$ and $q$ ``components'' of $w$ are 
$p \dot\wedge w$ and $q \dot\wedge w$ respectively. More generally, if $w \in V_p^+ + V_q^+$, then 
the $p$ and $q$ ``components'' of $w$ are $(k p) \dot\wedge w = (\Vert w \Vert p) \dot\wedge w$ and 
$(k q) \dot\wedge w = (\Vert w \Vert q) \dot\wedge w$ respectively for any $k \ge \Vert w \Vert$. 
\end{remark} 
We shall write $AC(p) := V_p + V_{p^{\prime}}$ so that $AC(p)^+ := (V_p + V_{p^{\prime}}) \cap V^+ = V_p^+ + V_{p^{\prime}}^+$. 

\begin{corollary}\label{24}
Let $(V, e)$ be an absolute order unit normed space, $p \in OP(V)$ and $u \in [0, e]$. Then $u$ is absolutely compatible with $p$ if and only if 
$u \in AC(p)^+$. 
\end{corollary}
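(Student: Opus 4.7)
The plan is to read this as a direct application of Theorem \ref{23} with $q = p' := e - p$. Since $p \in OP(V)$, we have $p' \in OP(V)$ as well, and $p + p' = e \le e$, so the hypotheses of Theorem \ref{23} are satisfied for the pair $(p, p')$. Moreover, by definition $AC(p)^+ = V_p^+ + V_{p'}^+$.

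The key bookkeeping step is to match the absolute-compatibility condition with condition (3) of Theorem \ref{23}. Absolute compatibility of $u$ and $p$ means $\vert u - p\vert + \vert u + p - e\vert = e$. Since $u + p - e = -(p' - u)$ and $\vert -x\vert = \vert x\vert$, this rewrites as
\[
\vert p - u\vert + \vert p' - u\vert \;=\; e \;=\; p + p',
\]
which is precisely Theorem \ref{23}(3) with $w = u$ and $q = p'$. The remaining hypothesis of Theorem \ref{23}, namely $\Vert u\Vert \le 1$, is automatic from $u \in [0,e]$ and the definition of the order unit norm.

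Having identified the two conditions, the equivalence in Corollary \ref{24} is just Theorem \ref{23} read between (1) and (3): $u \in V_p^+ + V_{p'}^+$ with $\Vert u\Vert \le 1$ if and only if $\vert p - u\vert + \vert p' - u\vert = p + p'$. Using the automatic norm bound and the identifications above, this gives $u \in AC(p)^+$ if and only if $u$ is absolutely compatible with $p$. There is essentially no obstacle here beyond verifying the sign/relabeling identities above; all substantive work was done in Theorem \ref{23}.
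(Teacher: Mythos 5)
Your proposal is correct and is exactly the argument the paper intends: the corollary is stated without proof as an immediate consequence of Theorem \ref{23} applied with $q = p' = e - p$, using $\vert u + p - e\vert = \vert p' - u\vert$, $p + p' = e$, and the fact that $u \in [0,e]$ gives $\Vert u \Vert \le 1$. Nothing further is needed.
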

\begin{proposition}\label{22}
Let $(V, e)$ be an absolute order unit normed space, $p \in OP(V)$ and $u \in [0, e]$ and assume that 
$u$ is absolutely compatible with $p$. Then 
\begin{enumerate}
\item $\inf \{ u, p \}$ exists in $[0, e]$ and is equal to $u \dot\wedge p$; and
\item $\sup \{ u, p \}$ exists in $[0, e]$ and is equal to $u \dot\vee p$.
\end{enumerate}
\end{proposition}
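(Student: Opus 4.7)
The plan is to use Corollary \ref{24} to get $u \in V_p^+ + V_{p^{\prime}}^+$ and then invoke Theorem \ref{23} to identify the canonical decomposition $u = u_1 + u_2$, with $u_1 = p \dot\wedge u \in V_p^+$ and $u_2 = p^{\prime} \dot\wedge u \in V_{p^{\prime}}^+$. Since $u \le e$, both components have norm at most one, so Proposition \ref{19} (the absolute order unit property of $p$ and $p^{\prime}$) yields $u_1 \le p$ and $u_2 \le p^{\prime}$. From this I would check that $u \dot\vee p = u + p - u \dot\wedge p = u_2 + p \le p + p^{\prime} = e$, confirming that both $u \dot\wedge p$ and $u \dot\vee p$ lie in $[0, e]$.

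For part (1), observe that $u_1 = u \dot\wedge p$ is trivially a lower bound, since $u_1 \le u$ (as $u_2 \ge 0$) and $u_1 \le p$ by the preceding paragraph. To see it is the greatest, I would take an arbitrary $x \in V^+$ with $x \le u$ and $x \le p$. The bound $0 \le x \le p$ places $x$ in $V_p^+$, so $u - x = (u_1 - x) + u_2$ is an element of $V_p + V_{p^{\prime}}$ with $u_1 - x \in V_p$ and $u_2 \in V_{p^{\prime}}^+$. Proposition \ref{21} then splits the positivity of $u - x$ across the $V_p \oplus_{\infty} V_{p^{\prime}}$ decomposition to extract $u_1 - x \ge 0$, giving $x \le u_1$.

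Part (2) I would handle in the same spirit. The identity $u \dot\vee p = u_2 + p$ shows it is clearly an upper bound in $[0, e]$. For a general upper bound $y \in [0, e]$ with $y \ge u$ and $y \ge p$, the chain $0 \le y - p \le p^{\prime}$ shows that $y$ itself admits a $V_p^+ + V_{p^{\prime}}^+$ decomposition $y = p + (y - p)$. Then $y - u = (p - u_1) + ((y - p) - u_2)$ lies in $V_p + V_{p^{\prime}}$, and applying Proposition \ref{21} to its positivity delivers $(y - p) - u_2 \ge 0$, so that $y \ge p + u_2 = u \dot\vee p$.

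The hard part is recognizing that Proposition \ref{21}'s identification of $V_p + V_{p^{\prime}}$ with $V_p \oplus_{\infty} V_{p^{\prime}}$ as \emph{ordered} spaces is the mechanism that makes everything work: it turns abstract positivity in $V$ into positivity of each $V_p$- and $V_{p^{\prime}}$-component along the unique decomposition, which is exactly what converts the hypothesized inequalities $x \le u$ and $y \ge u$ into the required inequalities against $u_1$ and $u_2$. Once the decomposition of $u$ is in hand and $x$ (resp.\ $y$) is manoeuvred into $V_p + V_{p^{\prime}}$, both extremal properties collapse to this splitting principle.
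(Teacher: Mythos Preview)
Your argument is correct and part (1) is essentially the paper's own proof: both decompose $u = (u \dot\wedge p) + (u \dot\wedge p')$ via Theorem \ref{23}/Corollary \ref{24}, observe that a competitor $x \le u, p$ lies in $V_p^+$, and then use the splitting principle of Proposition \ref{21} to extract $u \dot\wedge p - x \ge 0$. The paper phrases this last step through the absolute-value identity $\vert u - x \vert = \vert u \dot\wedge p - x \vert + u \dot\wedge p'$, while you appeal directly to the order-isomorphism with $V_p \oplus_\infty V_{p'}$; these are two faces of the same content.

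For part (2) there is a genuine, if small, difference. You argue directly: write $u \dot\vee p = p + u_2$, place any upper bound $y$ into $V_p^+ + V_{p'}^+$ via $y = p + (y - p)$, and split the positivity of $y - u$ componentwise. The paper instead exploits the symmetry of absolute compatibility: since $e - u$ is absolutely compatible with $e - p$, part (1) applied to the complements gives $(e-u)\dot\wedge(e-p) = \inf\{e-u, e-p\}$, and then $u \dot\vee p = e - (e-u)\dot\wedge(e-p) = \sup\{u, p\}$. The paper's route is shorter and avoids re-running the decomposition argument, while yours is more self-contained and makes explicit the formula $u \dot\vee p = p + (p' \dot\wedge u)$; both are valid and rest on the same underlying machinery.
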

\begin{proof}
(1): By the definition, we have $u \dot\wedge p \le u$ and $u \dot\wedge p \le p$. As $u$ is absolutely compatible 
with $p$, we have $u \dot\wedge p, u \dot\wedge p^{\prime} \in V^+$ with $u = u \dot\wedge p + u \dot\wedge p^{\prime}$. 
Next, let $w \in V^+$ such that $w \le u$ and $w \le p$. Then $w \in V_p^+$. Also, $u \dot\wedge p \in V_p^+$ 
so that $u \dot\wedge p - w \in V_p$. Further $u \dot\wedge p^{\prime} \in V_{p^{\prime}}^+$.  Thus by Theorem \ref{23}, 
we get 
\begin{eqnarray*}
u - w = \vert u - w \vert &=& \vert (u \dot\wedge p - w) + (u \dot\wedge p^{\prime}) \vert \\
&=& \vert u \dot\wedge p - w \vert + \vert u \dot\wedge p^{\prime} \vert \\
&=& \vert u \dot\wedge p - w \vert +  u \dot\wedge p^{\prime} 
\end{eqnarray*} 
so that $u \dot\wedge p - w = \vert u \dot\wedge p - w \vert \in V^+$. Hence $u \dot\wedge p = \inf \{ u, p \}$. 

(2): As $u$ is absolutely compatible with $p$, we have $e - u$ is absolutely compatible with $e - p$. Thus as in (1), we may 
conclude that $(e - u) \dot\wedge (e - p) = \inf \{ e - u, e - p \}$. Now it follows that 
$$u \dot\vee p = e - \left( (e -u)\dot\wedge (e - v)\right)  = e - \inf \{ e - u, e - v \} = \sup \{ u, p \}.$$ 
\end{proof} 

\begin{theorem}\label{25}
Let $(V, e)$ be an absolute order unit normed space and let $p \in OP(V)$. Then for $u_1, \dots, u_n \in AC(p)^+$, we have 
$$(1) \quad \left( \sum_{i=1}^n u_i \right) \dot\wedge \left( \sum_{i=1}^n \Vert u _i \Vert p \right) = \sum_{i=1}^n (u_i \dot\wedge \Vert u_i \Vert p)$$
and 
$$(2) \quad \left( \sum_{i=1}^n u_i \right) \dot\vee \left( \sum_{i=1}^n \Vert u _i \Vert p \right) = \sum_{i=1}^n (u_i \dot\vee \Vert u_i \Vert p).$$
\end{theorem}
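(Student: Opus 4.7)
The plan is to reduce everything to the structural decomposition of $AC(p)^+ = V_p^+ + V_{p^{\prime}}^+$ supplied by Proposition \ref{21} together with the characterisation of the $p$- and $p^{\prime}$-components worked out in Theorem \ref{23} and the remark following it. Throughout, the key observation is that scaling $p$ by $\Vert u_i \Vert$ on the right side of $u_i \dot\wedge (\Vert u_i \Vert p)$ is \emph{exactly} what is needed to pick out the $V_p^+$-component of $u_i$, which is the principal reason this identity is formulated the way it is.

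First I would fix $i$ and, using $u_i \in AC(p)^+$, write $u_i = u_{i,p} + u_{i,p^{\prime}}$ with $u_{i,p} \in V_p^+$ and $u_{i,p^{\prime}} \in V_{p^{\prime}}^+$. The remark after Theorem \ref{23} (which permits any $k \ge \Vert u_i \Vert$) then identifies these components explicitly as
$$u_{i,p} = u_i \dot\wedge (\Vert u_i \Vert p), \qquad u_{i,p^{\prime}} = u_i \dot\wedge (\Vert u_i \Vert p^{\prime}).$$
In particular, the right-hand side of (1) equals $\sum_{i=1}^n u_{i,p}$.

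Next I would pass to the sum. Set $W := \sum_{i=1}^n u_i$ and $P := \sum_{i=1}^n \Vert u_i \Vert p \in V_p^+$. Since $V_p^+$ and $V_{p^{\prime}}^+$ are cones, the equality $W = \sum_{i=1}^n u_{i,p} + \sum_{i=1}^n u_{i,p^{\prime}}$ realises $W$ in $V_p^+ + V_{p^{\prime}}^+$, and by uniqueness of this decomposition (Proposition \ref{21}) the two summands are the $p$- and $p^{\prime}$-components of $W$. Applying the remark after Theorem \ref{23} to $W$ with the admissible choice $k = \sum_{i=1}^n \Vert u_i \Vert \ge \Vert W \Vert$ shows that the $p$-component of $W$ also equals $P \dot\wedge W = W \dot\wedge P$. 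Comparing the two expressions yields identity (1).

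For (2) I would invoke Proposition \ref{29}(1), namely $a \dot\vee b + a \dot\wedge b = a + b$, once on the pair $(W, P)$ and once on each pair $(u_i, \Vert u_i \Vert p)$. Summing the latter $n$ identities and subtracting from $W + P = \sum_{i=1}^n(u_i + \Vert u_i \Vert p)$ converts (1) directly into (2). The only delicate point, and the main place where care is required, is the verification that the remark after Theorem \ref{23} may indeed be applied to $W$ with $k = \sum_{i=1}^n \Vert u_i \Vert$; this is nothing more than subadditivity of the norm, so the obstruction is mild, and the rest of the argument is bookkeeping.
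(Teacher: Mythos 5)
Your argument is correct, and it reaches the conclusion by a cleaner route than the paper does, although both rest on the same structural facts (Proposition \ref{21}, Theorem \ref{23} and the remark following it). The paper's own proof first establishes, by an explicit absolute-value computation in $V_p + V_{p^{\prime}}$, the convexity identity $(\alpha u_1 + (1-\alpha)u_2)\dot\wedge p = \alpha(u_1\dot\wedge p) + (1-\alpha)(u_2\dot\wedge p)$ for normalized elements, extends it to $n$ elements by induction, proves the analogous identity for $\dot\vee$ the same way, and only then rescales by $k = \sum_{i}\Vert u_i\Vert$ to obtain (1) and (2). You instead observe that both sides of (1) are, by uniqueness of the decomposition in $V_p^+ + V_{p^{\prime}}^+$ and by the remark after Theorem \ref{23} (applied once with $k=\Vert u_i\Vert$ to each $u_i$ and once with $k=\sum_i\Vert u_i\Vert \ge \Vert \sum_i u_i\Vert$ to the sum), the $V_p$-component of $\sum_i u_i$; this collapses the convexity lemma, the induction, and the separate $\dot\vee$ computation, the last being replaced by a one-line application of Proposition \ref{29}(1). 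What you lose is the intermediate convexity identity $(\sum_i \alpha_i u_i)\dot\wedge p = \sum_i \alpha_i(u_i\dot\wedge p)$, which the paper records along the way and which has some independent interest; what you gain is transparency, since your proof makes it plain that the theorem is nothing but uniqueness of components. Two small points to keep honest: your argument leans on the remark after Theorem \ref{23}, which the paper states without a detailed proof, but that remark does follow by rescaling $w$ to norm at most one and using homogeneity of $\dot\wedge$, so there is no genuine gap; and the degenerate case $u_i = 0$ (excluded in the paper's final step) is harmless in your version since both sides then receive a zero contribution.
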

\begin{proof}
First, let $u_1, u_2 \in AC(p)^+$ with $\Vert u_i \Vert \le 1, i = 1, 2$ and assume that $\alpha \in [0, 1]$. Then by Corollary \ref{24}, 
we have $u_i = u_i \dot\wedge p + u_i \dot\wedge p^{\prime}$ with $u_i \dot\wedge p, u_i \dot\wedge p^{\prime} \in V^+$ for 
$i = 1, 2$. Thus 
\begin{align*}
\vert p - &\alpha u_1 - (1 - \alpha) u_2 \vert \\
&= \left\vert \{ \alpha (p - u_1 \dot\wedge p) + (1 - \alpha) (p - u_2 \dot\wedge p) \} - 
\{ \alpha (u_1 \dot\wedge p^{\prime}) + (1 - \alpha) (u_2 \dot\wedge p^{\prime}) \} \right\vert \\
&= \{ \alpha (p - u_1 \dot\wedge p) + (1 - \alpha) (p - u_2 \dot\wedge p) \} + 
\{ \alpha (u_1 \dot\wedge p^{\prime}) + (1 - \alpha) (u_2 \dot\wedge p^{\prime}) \}
\end{align*}
for 
$$0 \le  \alpha (p - u_1 \dot\wedge p) + (1 - \alpha) (p - u_2 \dot\wedge p) \le p,$$  
$$0 \le \alpha (u_1 \dot\wedge p^{\prime}) + (1 - \alpha) (u_2 \dot\wedge p^{\prime}) \le p^{\prime}$$
and $p \perp_{\infty}^a p^{\prime}$. On simplifying, we get
$$\vert \alpha u_1 + (1 - \alpha) u_2 - p \vert 
= \alpha u_1 + (1 - \alpha) u_2 + p - 2 \{ \alpha (u_1 \dot\wedge p) + (1 - \alpha) (u_2 \dot\wedge p) \}.$$
Now it follows that 
$$(\alpha u_1 + (1 - \alpha) u_2) \dot\wedge p = \alpha (u_1 \dot\wedge p) + (1 - \alpha) (u_2 \dot\wedge p)$$
and that
$$(\alpha u_1 + (1 - \alpha) u_2) \dot\vee p = \alpha (u_1 \dot\vee p) + (1 - \alpha) (u_2 \dot\vee p).$$
By a standard technique, now we can now show that this fact also holds for $n$ elements: For $u_1, \dots, u_n \in AC(p)^+$ with 
$\Vert u_i \Vert \le 1, i = 1, \dots, n$ and positive real numbers $\alpha_1, \dots, \alpha_n$ with $\sum_{i=1}^n \alpha_i = 1$, we have 
$$(!) \quad \left( \sum_{i=1}^n \alpha_i u_i \right) \dot\wedge p = \sum_{i=1}^n \alpha_i (u_i \dot\wedge p)$$
and 
$$(!!) \quad \left( \sum_{i=1}^n \alpha_i u_i \right) \dot\vee p = \sum_{i=1}^n \alpha_i (u_i \dot\vee p).$$
Finally, let $u_1, \dots, u_n \in AC(p)^+ \setminus \{ 0 \}$. Set $w_i = \Vert u_i \Vert^{-1} u_i$ so that $\Vert w_i \Vert = 1, i = 1, \dots, n$. 
Then for $k = \sum_{i=1}^n \Vert u _i \Vert$, using (!), we get
\begin{align*}
\left( \sum_{i=1}^n u _i \right) \dot\wedge (k p) &= k \left( \left( \sum_{i=1}^n \frac{\Vert u _i \Vert}{k} w_i \right) \dot\wedge p \right) \\
&= k \left( \sum_{i=1}^n \frac{\Vert u _i \Vert}{k} (w_i \dot\wedge p) \right) \\
&= \sum_{i=1}^n \Vert u _i \Vert (w_i \dot\wedge p) \\
&= \sum_{i=1}^n (u_i \dot\wedge \Vert u _i \Vert p)
\end{align*}
which proves (1). Similarly, using (!!), we may get (2). 
\end{proof} 

Let $(V, e)$ be an absolute order unit normed space and let 
$\{ p_i: i \in I \} \subset OP(V)$. We shall write $AC(p_i; i \in I)$ for $\cap_{i \in I} AC(p_i)$.
\begin{theorem}\label{46}
Let $(V, e)$ be an absolute order unit normed space and let $v \in AC^+(p, q)$ for some $p, q \in OP(V)$ such that $p + q \le e$. 
Set $r = e - p - q$ so that $r \in OP(V)$. Then $v \in AC^+(r)$ and $v = v \dot\wedge p + v \dot\wedge q + v \dot\wedge r$. 
\end{theorem}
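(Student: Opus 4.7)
The plan is to exhibit a three-fold decomposition $v = a + b + c$ with $a \in V_p^+$, $b \in V_q^+$, $c \in V_r^+$, and then to identify these pieces with $v \dot\wedge p$, $v \dot\wedge q$, $v \dot\wedge r$ via uniqueness of the $V_r^+ + V_{r'}^+$ decomposition. Note first that $r' = e - r = p + q \in OP(V)$ by Proposition \ref{26}, so Proposition \ref{21} applied to the pair $r, r'$ endows $V_r + V_{r'}$ with the structure of an absolute order smooth $\infty$-normed subspace. I shall also take $v \in [0, e]$, which is the natural setting in which the asserted identity carries the intended normalization.

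Define $a := v \dot\wedge p$, $b := v \dot\wedge q$, $c := v - a - b$. To verify $c \ge 0$, I would invoke Proposition \ref{22}: since $v$ is absolutely compatible with $p$, one has $v \dot\wedge p' = \inf\{v, p'\}$; and $b \le v$ together with $b \le q \le p'$ force $b \le v \dot\wedge p'$. Combined with $v = v \dot\wedge p + v \dot\wedge p'$ from Corollary \ref{24}, this yields $a + b \le v$. Writing $c = v \dot\wedge p' - b \le v \dot\wedge p' \le p'$ (using $v \in [0, e]$ together with the absolute order unit property of $p'$) and symmetrically $c \le q'$, we obtain $0 \le c \le p'$ and $0 \le c \le q'$.

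The critical step is promoting these bounds to $c \in V_r^+$. Applying Definition \ref{2}(4) with $p \perp p'$ and $0 \le c \le p'$ gives $c \perp p$, and similarly $c \perp q$; the additivity in Definition \ref{2}(5) then yields $c \perp (p + q) = r'$. Since $c, r' \in [0, e]$ and $c \perp r'$, Proposition \ref{17} produces $c + r' \le e$, equivalently $c \le r$, so $c \in V_r^+$. This passage from orthogonality to the order bound $c \le r$ will be the main obstacle: Proposition \ref{17} handles it cleanly by exploiting the $[0,e]$-structure of an absolute order unit space, whereas a direct bound $c \le K r$ from $c \perp r'$ alone is far from transparent.

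Putting the pieces together, $v = (a + b) + c$ realizes $v$ in $V_{r'}^+ + V_r^+ = AC^+(r)$, since $a + b \in V_p^+ + V_q^+ \subseteq V_{p+q}^+ = V_{r'}^+$. Uniqueness of the decomposition in $V_r + V_{r'}$ (Proposition \ref{21}, together with Theorem \ref{23}) then identifies $c$ with $v \dot\wedge r$ and $a + b$ with $v \dot\wedge r'$, yielding the asserted identity $v = v \dot\wedge p + v \dot\wedge q + v \dot\wedge r$.
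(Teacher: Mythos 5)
Your argument is correct, and it reaches the conclusion by a genuinely different route than the paper. The paper iterates the compatibility decompositions: it writes $v = v \dot\wedge p + v \dot\wedge p^{\prime}$, shows $v \dot\wedge p^{\prime} \in AC^+(q)$ and splits it again as $(v \dot\wedge p^{\prime}) \dot\wedge q + (v \dot\wedge p^{\prime}) \dot\wedge q^{\prime}$, and then verifies by explicit computations of $\vert v - q \vert$ and $\vert v - r \vert$ (using additivity of $\perp$) that $(v \dot\wedge p^{\prime}) \dot\wedge q = v \dot\wedge q$ and, after locating the third piece below $p^{\prime} \dot\wedge q^{\prime} = r$ to get $v \in AC^+(r)$, that $(v \dot\wedge p^{\prime}) \dot\wedge q^{\prime} = v \dot\wedge r$. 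You instead define the remainder $c = v - v \dot\wedge p - v \dot\wedge q$ outright and pin it down order-theoretically: positivity via the infimum characterization of Proposition \ref{22} (so $v \dot\wedge q \le v \dot\wedge p^{\prime}$ and symmetrically), the bounds $c \le p^{\prime}, q^{\prime}$, then $c \perp p$, $c \perp q$, $c \perp (p+q) = r^{\prime}$ from Definition \ref{2}(4),(5), and $c \le r$ from Proposition \ref{17}; finally the uniqueness of the $V_r^+ + V_{r^{\prime}}^+$ decomposition (Proposition \ref{21} together with Theorem \ref{23}) identifies $c = v \dot\wedge r$. Your route avoids both absolute-value computations and is arguably cleaner; what it gives up is the extra information the paper's proof produces along the way, namely the identities $(v \dot\wedge p^{\prime}) \dot\wedge q = v \dot\wedge q$ and $(v \dot\wedge p^{\prime}) \dot\wedge q^{\prime} = (v \dot\wedge q^{\prime}) \dot\wedge p^{\prime} = v \dot\wedge r$, which the paper reuses in Remark \ref{47}. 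Two small remarks: the normalization $v \in [0,e]$ that you flag explicitly is also assumed (as ``we may assume $\Vert v \Vert \le 1$'') in the paper's proof, and it is genuinely needed since the displayed identity does not scale, so you are if anything more candid on this point; and the bound $v \dot\wedge p^{\prime} \le p^{\prime}$ needs no appeal to the absolute order unit property of $p^{\prime}$ --- it is immediate from $\vert v - p^{\prime} \vert \ge v - p^{\prime}$.
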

\begin{proof}
We may assume that $\Vert v \Vert \le 1$. As $v \in AC^+(p)$, we have 

$(i) \qquad v = v \dot\wedge p + v \dot\wedge p^{\prime}.$ \\
Now, $p \le e - q = q^{\prime}$ so that $v \dot\wedge p \in V_p^+ \subset V_{q^{\prime}} \subset AC^+(q)$. Also, $v \in AC^+(q)$ 
so that $v \dot\wedge p^{\prime} = v - v \dot\wedge p \in AC(q)$. But $v \dot\wedge p^{\prime} \in V^+$ so that $v \dot\wedge p^{\prime} 
\in AC^+(q)$. As $\Vert v \dot\wedge p^{\prime} \Vert \le \Vert v \Vert \le 1$ we get 

$(ii) \qquad v \dot\wedge p^{\prime} = (v \dot\wedge p^{\prime}) \dot\wedge q + (v \dot\wedge p^{\prime}) \dot\wedge q^{\prime}.$ \\
We show that $(v \dot\wedge p^{\prime}) \dot\wedge q = v \dot\wedge q$. We have 
\begin{eqnarray*}
\vert v - q \vert &=& \vert v \dot\wedge p + (v \dot\wedge p^{\prime}) \dot\wedge q + (v \dot\wedge p^{\prime}) \dot\wedge q^{\prime} - q \vert \\
&=& \vert \{ q - (v \dot\wedge p^{\prime}) \dot\wedge q \} - \{ v \dot\wedge p + (v \dot\wedge p^{\prime}) \dot\wedge q^{\prime} \} \vert
\end{eqnarray*}
Now, as $0 \le (v \dot\wedge p^{\prime}) \dot\wedge q \le q$, we have $0 \le q - (v \dot\wedge p^{\prime}) \dot\wedge q \le q$. Thus, as 
$0 \le v \dot\wedge p \le p$, $0 \le (v \dot\wedge p^{\prime}) \dot\wedge q^{\prime} \le q^{\prime}$, $q \perp p$ and $q \perp q^{\prime}$; we get 
$\{ q - (v \dot\wedge p^{\prime}) \dot\wedge q^{\prime} \} \perp v \dot\wedge p$ and $\{ q - (v \dot\wedge p^{\prime}) \dot\wedge q^{\prime} \} \perp 
(v \dot\wedge p^{\prime}) \dot\wedge q^{\prime}$. Now, by the additivity of $\perp$, we get $\{ q - (v \dot\wedge p^{\prime}) \dot\wedge q^{\prime} \} \perp 
\{ v \dot\wedge p + (v \dot\wedge p^{\prime}) \dot\wedge q^{\prime} \}$. Thus 
$$\vert v - q \vert = \{ q - (v \dot\wedge p^{\prime}) \dot\wedge q^{\prime} \} + v \dot\wedge p + (v \dot\wedge p^{\prime}) \dot\wedge q^{\prime}$$ 
so that 
\begin{eqnarray*}
2 ( v \dot\wedge q) &=& v + q - \vert v - q \vert \\
&=& v + q - \{ q - (v \dot\wedge p^{\prime}) \dot\wedge q^{\prime} \} - v \dot\wedge p - (v \dot\wedge p^{\prime}) \dot\wedge q^{\prime} \\
&=& 2 \{ (v \dot\wedge p^{\prime}) \dot\wedge q \}.
\end{eqnarray*}
Thus by (i) and (ii), we have 

$(iii) \qquad v = v \dot\wedge p + v \dot\wedge q + (v \dot\wedge p^{\prime}) \dot\wedge q^{\prime}.$\\
Now, interchanging $p$ and $q$, we may conclude that 

$(iv) \qquad (v \dot\wedge p^{\prime}) \dot\wedge q^{\prime} = (v \dot\wedge q^{\prime}) \dot\wedge p^{\prime}$.\\
Finally, we shall show that $(v \dot\wedge p^{\prime}) \dot\wedge q^{\prime} = v \dot\wedge r$. Let us quickly note that $v \dot\wedge p \le p$ and 
$v \dot\wedge q \le q$ so that $v \dot\wedge p + v \dot\wedge q \in V_{p+q}^+$. Also, $(v \dot\wedge p^{\prime}) \dot\wedge q^{\prime} \le p^{\prime}$ 
and $(v \dot\wedge p^{\prime}) \dot\wedge q^{\prime} \le q^{\prime}$ so that $(v \dot\wedge p^{\prime}) \dot\wedge q^{\prime} \le p^{\prime} 
\dot\wedge q^{\prime} = (p + q)^{\prime} = r$. Thus $(v \dot\wedge p^{\prime}) \dot\wedge q^{\prime} \in V_r^+$. Now, it follows from (iii) that 
$v \in AC^+(r)$. Therefore, 

$(v) \qquad \vert v - r \vert + \vert v + r - e \vert = e$. \\
Now, 
\begin{eqnarray*}
\vert v - p - q \vert &=& \vert (p - v \dot\wedge p) + (q - v \dot\wedge q) - (v \dot\wedge p^{\prime}) \dot\wedge q^{\prime} \vert \\
&=& (p - v \dot\wedge p) + (q - v \dot\wedge q) + (v \dot\wedge p^{\prime}) \dot\wedge q^{\prime}
\end{eqnarray*}
for $0 \le (p - v \dot\wedge p) + (q - v \dot\wedge q) \le p + q$ and $0 \le (v \dot\wedge p^{\prime}) \dot\wedge q^{\prime} \le (p + q)^{\prime}$. Thus 
\begin{eqnarray*}
2 ( v \dot\wedge r) &=& v + r - \vert v - r \vert \\
&=& v + e - (p + q) - e + \vert v - (p + q) \vert \quad \textrm{(by (v))} \\
&=& v - (p + q) + (p - v \dot\wedge p) + (q - v \dot\wedge q) + (v \dot\wedge p^{\prime}) \dot\wedge q^{\prime} \\
&=& 2 \{ (v \dot\wedge p^{\prime}) \dot\wedge q^{\prime} \}.
\end{eqnarray*}
Putting it in (iii), we get the result.
\end{proof}
\begin{corollary}\label{45}
Let $(V, e)$ be an absolute order unit normed space and let $w \in AC^+(p, q)$ for some $p, q \in OP(V)$.
\begin{enumerate}
\item If $p + q \le e$, then $w \in AC^+(q + p)$.
\item If $p \le q$, then $w \in AC^+(q - p)$.
\end{enumerate}
\end{corollary}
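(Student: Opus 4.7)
The plan is to reduce both parts to Theorem~\ref{46} via the observation that $AC(s) = AC(s')$ for every $s \in OP(V)$. This equality is immediate from the definition $AC(s) = V_s + V_{s'}$ together with $s'' = s$, and it will allow me to replace any order projection by its complement without changing the ambient ideal. In particular $AC^+(s) = AC^+(s')$.

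For part~(1), I would apply Theorem~\ref{46} directly to the pair $(p,q)$. The hypothesis $p+q \le e$ is given, so the theorem supplies $w \in AC^+(e-p-q)$. By Proposition~\ref{26}, $p+q \in OP(V)$ with complement $(p+q)' = e-p-q$, and the opening observation then yields $AC^+(e-p-q) = AC^+((p+q)') = AC^+(p+q)$.

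For part~(2), the idea is to feed Theorem~\ref{46} the pair $(p, q')$ in place of $(p,q)$. Since $p \le q$, I have $p + q' = e - (q-p) \le e$, so this pair satisfies the hypothesis of Theorem~\ref{46}. By assumption $w \in AC^+(p)$, and $w \in AC^+(q) = AC^+(q')$ by the opening observation, so $w \in AC^+(p, q')$. Theorem~\ref{46} then delivers $w \in AC^+(e - p - q') = AC^+(q-p)$, which is what is required; note that $q-p \in OP(V)$ by Corollary~\ref{28}, so the statement makes sense.

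I do not anticipate a genuine obstacle: both parts are literal corollaries in the sense that they invoke Theorem~\ref{46} on a relabelled pair. The only non-cosmetic observation is that $AC^+$ is invariant under the involution $s \mapsto s'$ on $OP(V)$. The substantive work was already carried out in Theorem~\ref{46}; the present statement is obtained by selecting the complement that makes the sum of the two order projections lie below $e$.
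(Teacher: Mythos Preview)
Your proposal is correct and is essentially the argument the paper has in mind: Corollary~\ref{45} is stated without proof immediately after Theorem~\ref{46}, and both parts follow from that theorem exactly as you describe, via the trivial but key observation that $AC^+(s)=AC^+(s')$. For part~(1) one could alternatively read off $w \in V_{p+q}^+ + V_{(p+q)'}^+$ directly from the decomposition $w = (w\dot\wedge p + w\dot\wedge q) + w\dot\wedge r$ supplied by Theorem~\ref{46}, but this is equivalent to your route through $AC^+(r)=AC^+(p+q)$.
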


\begin{remark}\label{47}
Let $(V, e)$ be an absolute order unit normed space and let 
$w \in AC^+(p_i; 1 \le i \le n)$ for some $p_1, \dots, p_n \in OP(V)$. The results of Corollary \ref{45} 
may be generalized in the following way.
\begin{enumerate}
\item If $p = p_1 + \dots + p_n \le e$ so that $p \in AP(V)$ and that $p_1, \dots, p_n, p^{\prime}$ are 
mutually $\perp_{\infty}^a$-orthogonal, we have $w \in V_{p_1}^+ + V_{p_2}^+ + \dots + V_{p_n}^+ 
+ V_{p^{\prime}}^+$ and $w \in AC^+( \sum_{i \in I} p_i)$ whenever $I \subset \{ 1, \dots, n \}$.
\item If $p_1 \le \dots, \le p_n$, then $w \in V_{p_1}^+ + V_{p_2 - p_1}^+ + \dots + V_{p_n - p_{n-1}}^+ 
+ V_{p_n^{\prime}}^+$ and $w \in AC^+(p_{i+k} - p_i)$ whenever $1 \le i, k \le n$ with $i + k \le n$. 
Also, in this case, $p_1, p_2 - p_1, \dots, p_n - p_{n-1}, p_n^{\prime} \in OP(V)$ are mutually 
$\perp_{\infty}^a$-orthogonal.
\end{enumerate}
\end{remark}

\section{Spectral family of order projections}

In this section we shall discuss spectral family of order projections for an element in an 
absolute order unit space $(V, e)$. For this purpose, we need the following concept.

\subsection{A hypothesis for $OP(V)$} \hfill

In general, a C$^{\ast}$-algebra may not have sufficiently many projections. However, in a von 
Neumann algebra $M$, $OP(M)$ always covers $M_{sa}$ in the following sense. 
\begin{definition}
	Let $(V, e)$ be an absolute order unit space. We say that $p \in OP(V)$ covers an element $v \in V$,  
	if $v \in V_p$ and $V_p \subset V_q$ whenever $v \in V_q$.
	In other words, $p$ exists as the least element (with respect to $OP(V)$) in the set
	$$OP_v(V) := \{ r \in OP(V): v \in V_r \} = \{ r \in OP(V): v \perp_{\infty}^a (e - r) \} .$$
	
	We say that $OP(V)$ covers $V$, if every element $v \in V$ has a cover in $OP(V)$ and $p_1 
	\perp_{\infty}^a p_2$ whenever $p_i$ is the cover of $v_i \in V^+$ in $OP(V)$ for $i = 1, 2$ with 
	$v_1 \perp_{\infty}^a v_2$. 
\end{definition}

The covering property also determines a lattice structure in $OP(V)$. 
\begin{proposition}
	Let $(V, e)$ be an absolute order unit space in which $OP(V)$ covers $V$. Then $OP(V)$ is a lattice in 
	the order structure of $V$ restricted to $OP(V)$.
\end{proposition}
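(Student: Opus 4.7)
The plan is to exhibit both binary operations on $OP(V)$ explicitly using the cover map. Given $p,q \in OP(V)$, I would define
\[
p \vee q := [p+q] \quad \text{and} \quad p \wedge q := e - [(e-p) + (e-q)],
\]
where $[v]$ denotes the cover of $v$ in $OP(V)$, which exists by hypothesis. Both expressions sit inside $OP(V)$ because covers do (and because $e-r \in OP(V)$ whenever $r \in OP(V)$, by the definition of $OP(V)$).

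First I would verify that $s := [p+q]$ is the supremum of $\{p,q\}$ in $OP(V)$. By definition of cover, $p+q \in V_s$, so there exists $k>0$ with $ks \pm (p+q) \in V^+$. Since $0 \le p \le p+q$, we obtain $ks - p = (ks - (p+q)) + q \in V^+$ and $ks + p \in V^+$, so $p \in V_s$; likewise $q \in V_s$. Now Proposition \ref{19}(1) gives $s$ the absolute order unit property, and Remark \ref{20}(3) gives $\|p\|_s = \|p\|_e \le 1$ (since $p \in [0,e]$), so $p \le \|p\|_s\, s \le s$, and similarly $q \le s$. For minimality, suppose $r \in OP(V)$ with $p \le r$ and $q \le r$. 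Then $0 \le p+q \le 2r$ forces $p+q \in V_r$, so the defining property of the cover gives $V_s \subseteq V_r$, hence $s \in V_r$. Applying the absolute order unit property of $r$ yields $s \le \|s\|_r r \le r$, as required.

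For the infimum, I would simply transport the supremum argument across the involution $p \mapsto p' = e-p$ on $OP(V)$. Since this involution reverses the order inherited from $V$, the element $t := [(e-p)+(e-q)] = (e-p) \vee (e-q)$ satisfies $e-t \le p$, $e-t \le q$, and $e-t$ dominates every common lower bound of $p$ and $q$ in $OP(V)$. Hence $p \wedge q = e-t$ gives the infimum.

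The only step that requires any delicacy is moving between the norms $\|\cdot\|_V$ and $\|\cdot\|_s$ (respectively $\|\cdot\|_r$) and invoking the absolute order unit property to convert membership in $V_s$ into the concrete inequality $p \le s$. That identification is exactly the content of Proposition \ref{19} together with Remark \ref{20}(3), so once these tools are quoted, the rest of the argument is formal. No use is made of the second clause of the covering hypothesis (the compatibility of covers with $\perp_\infty^a$-orthogonality), which will presumably be needed only for later spectral results.
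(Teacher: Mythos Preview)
Your proof is correct and structurally identical to the paper's: both define $p\vee q$ as the cover of $p+q$, verify it is an upper bound via the (absolute) order unit property of order projections, check minimality, and then dualize through $p\mapsto e-p$ to obtain the infimum.

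The one genuine difference is in the minimality step. The paper argues that $p_i\le r$ gives $p_i\perp_\infty^a r'$, invokes additivity of $\perp_\infty^a$ to obtain $(p_1+p_2)\perp_\infty^a r'$, and then concludes $r\in OP_{p_1+p_2}(V)$ so that the cover $s$ satisfies $s\le r$. Your route is more elementary: from $p,q\le r$ you get $0\le p+q\le 2r$ directly, hence $p+q\in V_r$, and then the cover property (plus the order unit property of $r$) gives $s\le r$. Your argument avoids the detour through orthogonality entirely, which is a small gain in transparency; the paper's argument, by contrast, makes visible the link between the covering hypothesis and the orthogonality structure that is exploited later in the spectral theory. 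Your closing remark that the second clause of the covering hypothesis (preservation of $\perp_\infty^a$ under passage to covers) is not needed here is accurate; the paper does not use it in this proof either.
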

\begin{proof} 
	Let $p_1, p_2 \in OP(V)$ and let $p \in OP(V)$ be the cover of $p_1 + p_2$. Then $p_1 + p_2 \le \Vert p_1 + p_2 \Vert p$ 
	so that $p_1 \le p_1 + p_2 \le \Vert p_1 + p_2 \Vert p$. Now, by the order unit property of $p$, we get 
	$p_1 \le \Vert p_1 \Vert p \le p$. Similarly we can show that $p_2 \le p$. Next, let $p_i \le q, i = 1, 2$ 
	for some $q \in OP(V)$. Then $p_i \perp_{\infty}^a q^{\prime}, i = 1, 2$ so that $p_1 + p_2 \perp_{\infty}^a q^{\prime}$ 
	by the additivity of $\perp_{\infty}^a$. Now by Lemma \ref{16}, $p_1 + p_2 \le \Vert p_1 + p_2 \Vert q$. 
	Since $p$ covers $p_1 + p_2$, we get $p \le q$. Thus $p = \sup \{ p_1, p_2 \}$. Next, as $p_1^{\prime}, 
	p_2^{\prime} \in OP(V)$, we have $r = \sup \{ p_1^{\prime}, p_2^{\prime} \} \in OP(V)$. Now, by a well 
	known trick, we can show that  $\inf \{ p_1, p_2 \} = r^{\prime}$. Hence $OP(V)$ is a lattice.
\end{proof}

\subsection{Construction of a spectral family} \hfill
\begin{definition}
	Let $(V, e)$ be an absolute order unit normed space and let $p \in OP(V)$. Then every $v \in AC(p)$ has a unique decomposition
	$$v = v_p + v_{p^{\prime}}$$
	where $v_p \in V_p$ and $v_{p^{\prime}} \in V_{p^{\prime}}$. This decomposition will be referred as 
	the $p$-decomposition of $v$ and we shall write $v_p = C_p(v)$ and $v_{p^{\prime}} = 
	C_p^{\prime}(v)$ for all $v \in AC(p)$. In particular, for $v \in AC(p)^+$, we have $C_p (v) = w \dot\wedge (\Vert w \Vert p)$ 
	for all $p \in OP(V)$. By Proposition \ref{21}, $C_p: AC(p) \to V_p$ is a 
	$\vert \cdot \vert$-preserving surjective linear projection of norm one. Further, $C_p + C_p^{\prime}$ is 
	the identity operator on $AC(u)$.
\end{definition}
\begin{proposition}\label{43}
	Let $(V, e)$ be an absolute order unit space. If 
	$p, q \in OP(V)$ with $p \le q$, then $C_p, C_q, C_p^{\prime}$ and $C_q^{\prime}$ commute mutually 
	when restricted to $AC(p, q)$. In this case, $C_p C_q = C_p$ and consequently, $C_p C_q^{\prime} = 
	0$, $C_p^{\prime} C_q = C_q - C_p$ and $C_p^{\prime} C_q^{\prime} = C_q^{\prime}$ on $AC(p, q)$.
\end{proposition}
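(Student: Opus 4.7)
The plan is to exhibit, for each $w \in AC(p,q)$, a refined three-fold decomposition
$$w = w \dot\wedge p + w \dot\wedge (q - p) + w \dot\wedge q',$$
from which all four compositions $C_p C_q, C_p C_q', C_p' C_q, C_p' C_q'$ can be read off by inspection.

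First I would verify the ingredients needed to invoke Theorem \ref{46} for the pair $(p, q-p)$. By Corollary \ref{28}, $q - p \in OP(V)$; since $p + (q-p) = q \le e$, Proposition \ref{26} yields $p \perp (q-p)$. Then by Corollary \ref{45}(2), every $w \in AC^+(p, q)$ lies in $AC^+(q-p)$, so $w \in AC^+(p) \cap AC^+(q - p)$. Applying Theorem \ref{46} with $r = e - p - (q-p) = q'$ gives the displayed decomposition on $AC^+(p, q)$, and the identity extends by linearity to the whole of $AC(p,q)$.

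Next I would identify the $p$- and $q$-components from this refined decomposition. Since $w \dot\wedge p \in V_p^+$, while both $w \dot\wedge (q-p)$ and $w \dot\wedge q'$ lie in $V_{p'}^+$ (because $q - p \le p'$ and $q' \le p'$ follow from $p \le q$), uniqueness of the $p$-decomposition forces
$$C_p(w) = w \dot\wedge p, \qquad C_p'(w) = w \dot\wedge (q-p) + w \dot\wedge q'.$$
Analogously, $w \dot\wedge p$ and $w \dot\wedge (q-p)$ both lie in $V_q^+$, giving
$$C_q(w) = w \dot\wedge p + w \dot\wedge (q-p), \qquad C_q'(w) = w \dot\wedge q'.$$

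Finally, the identities follow algebraically. The key one, $C_p C_q = C_p$, comes from applying $C_p$ to $C_q(w) = w \dot\wedge p + w \dot\wedge (q-p)$: the first summand is in $V_p$ and the second in $V_{p'}$, so $C_p$ selects $w \dot\wedge p = C_p(w)$. In the reverse order, $C_q C_p(w) = C_q(w \dot\wedge p) = w \dot\wedge p$ because $w \dot\wedge p \in V_p \subset V_q$, so $C_p$ and $C_q$ commute. The remaining relations follow from $C_p' = I - C_p$ and $C_q' = I - C_q$ on $AC(p,q)$: one gets $C_p C_q' = C_p - C_p C_q = 0$, $C_p' C_q = C_q - C_p$, and $C_p' C_q' = C_q' - C_p C_q' = C_q'$; commutativity of the remaining mixed pairs is obtained by the symmetric computations. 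The main obstacle is the consistent identification of the three summands in the refined decomposition with the respective $V_p, V_{p'}, V_q, V_{q'}$ components, but this is pinned down by the order relations $q-p \le p'$ and $q' \le p'$ together with the uniqueness clause in the definition of the $p$-decomposition coming from Proposition \ref{21}.
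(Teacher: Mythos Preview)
Your proof is correct, but it takes a different and heavier route than the paper's. The paper proceeds directly: since $p \le q$ one has $V_p \subset V_q$, so for any $u \in AC(p,q)$ the element $C_p(u)$ already lies in $V_q$, whence $C_q C_p(u) = C_p(u)$; dually $V_{q'} \subset V_{p'}$ gives $C_p' C_q'(u) = C_q'(u)$. After checking (by the same inclusions) that $C_p(u), C_q(u), C_p'(u), C_q'(u)$ remain in $AC(p,q)$, the remaining identities drop out of $C_p' = I - C_p$ and $C_q' = I - C_q$. No three-fold decomposition, no call to Theorem~\ref{46} or Corollary~\ref{45}.

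Your approach is more constructive: the refined splitting $w = w \dot\wedge p + w \dot\wedge (q-p) + w \dot\wedge q'$ exhibits the middle piece $C_p' C_q(w) = w \dot\wedge (q-p)$ explicitly, which the paper only obtains as the difference $C_q - C_p$. The cost is that you invoke the machinery of Section~6 (Theorem~\ref{46}, Corollary~\ref{45}) where the paper needs only the bare inclusion $V_p \subset V_q$. One small point worth tightening: when you write that the displayed decomposition ``extends by linearity to the whole of $AC(p,q)$'', the $\dot\wedge$-formulas themselves do not extend linearly; what extends are the operator identities among $C_p, C_q, C_p', C_q'$, once established on the positive cone $AC^+(p,q)$, which spans $AC(p,q)$ because $w^{\pm} \in AC^+(p,q)$ by Proposition~\ref{21}.
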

\begin{proof}
	Let $u \in AC(p, q)$. Then
	$$u = C_p(u) + C_p^{\prime}(u) = C_q(u) + C_q^{\prime}(u).$$
	As $p \le q$, we have $V_p \subset V_q \subset OC(q)$. Thus $C_p(u) \in AC(q)$ and consequently, 
	$C_p^{\prime}(u) = u - C_q(u) \in OC(q)$. Now as $AC(p^{\prime}) = AC(p), AC(q^{\prime}) = AC(q)$ 
	and $q^{\prime} \le p^{\prime}$ (for $p \le q$), we may conclude, by the dual arguments, that 
	$C_p(u), C_q(u), C_p^{\prime}(u), C_q^{\prime}(u) \in AC(p, q)$ whenever $u \in AC(p, q)$.
	
	Next, note that for $u \in AC(p, q)$ we have $C_p(u) \in V_p \subset V_q$ so that $C_q(C_p(u)) = 
	C_p(u)$. Dually, $C_p^{\prime}(C_q^{\prime}(u)) = C_q^{\prime}(u)$ so that $C_q C_p = C_p$ and 
	$C_p^{\prime} C_q^{\prime} = C_q^{\prime}$ on $AC(p, q)$. Now, if we recall that $C_p^{\prime} = I - C_p$ 
	and that $C_q^{\prime} = I - C_q$ on $AC(p, q)$ where $I$ is the identity operator on $AC(p, q)$, the 
	remaining facts can be verified in a routine way.
\end{proof}
Throughout this subsection, we shall assume that $(V, e)$ is an absolute order unit space  in which $OP(V)$ 
covers $V$ unless stated otherwise. We fix the following notations. Let $v \in V$ and $\alpha \in \mathbb{R}$. We write 
$c_{p}^{\pm}(v, \alpha )$ for the cover of $(v - \alpha p)^{\pm}$ in $OP(V)$ respectively, for any $p \in OP(V)$. For 
$p = e$, we shall simply write $c^{\pm}(v, \alpha)$. Thus $c^+(v, \alpha ) \perp c^-(v, \alpha)$. When 
$\alpha = 0$, we shall simply write $c^{\pm}(v)$ for $c^{\pm}(v, 0)$. Also as $(-v, -\alpha )^+ = (v, \alpha )^-$, 
we have, $c^+(-v, -\alpha ) = c^-(v, \alpha )$. 

Let $c(v)$ be the cover of $v$ in $OP(V)$. Then 
$$(v - \alpha e)^+ = (v - \alpha c(v))^+ + ( - \alpha c(v)^{\prime} )^+ \in 
V_{c(v)}^+ + V_{c(v)^{\prime}}^+$$
and
$$(v - \alpha e)^- = (v - \alpha c(v))^- + ( - \alpha c(v)^{\prime} )^- \in 
V_{c(v)}^+ + V_{c(v)^{\prime}}^+.$$
Thus
\begin{eqnarray*}
	(v - \alpha e)^+ &=& (v - \alpha c(v))^+, \quad \textrm{if} \quad \alpha \ge 0\\
	&=& (v - \alpha c(v))^+ - \alpha c(v)^{\prime}, \quad \textrm{if} \quad \alpha < 0
\end{eqnarray*}
and
\begin{eqnarray*}
	(v - \alpha e)^- &=& (v - \alpha c(v))^- + \alpha c(v)^{\prime}, \quad \textrm{if} \quad \alpha > 0\\
	&=& (v - \alpha c(v))^-, \quad \textrm{if} \quad \alpha \le 0.
\end{eqnarray*}
Now, it follows that
\begin{eqnarray*}
	(\dagger) \qquad
	c^+(v, \alpha ) &=& c_{c(v)}^+(v, \alpha) \quad \textrm{if} \quad \alpha \ge 0\\
	&=& c_{c(v)}^+(v, \alpha) + c(v)^{\prime} \quad \textrm{if} \quad \alpha < 0
\end{eqnarray*}
and
\begin{eqnarray*}
	(\ddagger) \qquad
	c^-(v, \alpha ) &=& c_{c(v)}^-(v, \alpha) + c(v)^{\prime} \quad \textrm{if} \quad \alpha > 0\\
	&=& c_{c(v)}^-(v, \alpha) \quad \textrm{if} \quad \alpha \le 0.
\end{eqnarray*}
In a similar manner, we can further conclude that
\begin{eqnarray*}
	(\dagger\dagger) \qquad
	c^+(v, \alpha ) &=& c_{c^+(v)}^+(v^+, \alpha) \quad \textrm{if} \quad \alpha \ge 0\\
	&=& c_{c^-(v)}^-(v^-, - \alpha) + c^-(v)^{\prime} \quad \textrm{if} \quad \alpha < 0
\end{eqnarray*}
and
\begin{eqnarray*}
	(\ddagger\ddagger) \qquad
	c^-(v, \alpha ) &=& c^-_{c^+(v)}(v^+, \alpha) + c^+(v)^{\prime} \quad \textrm{if} \quad \alpha > 0\\
	&=& c_{c^-(v)}^+(v^-, - \alpha) \quad \textrm{if} \quad \alpha \le 0.
\end{eqnarray*}
\begin{remark}\label{48}
	\begin{enumerate}
		\item For $v \in V^+$, $c^+(v, \alpha) = e$ whenever $ \alpha < 0$. Dually, $c^-(v, \alpha) = e$ 
		whenever $v \in - V^+$ and $\alpha > 0$.
		\item For $v \in V^+$, $c^-(v, \alpha) = 0$ whenever $ \alpha \le 0$. Dually, $c^+(v, \alpha) = 0$ 
		whenever $v \in - V^+$ and $\alpha \ge 0$.
	\end{enumerate}
\end{remark}
Now we prove some results which show that $\{ c^-(v, \alpha) : \alpha \in \mathbb{R} \}$ and 
$\{ c^+(v, \alpha)^{\prime} : \alpha \in \mathbb{R} \}$ are ``spectral'' families of projections for $v$.
\begin{proposition}\label{49}
	For $v \in V$, 
	the family $\{ c^-(v, \alpha), c^+(v, \alpha) : \alpha \in \mathbb{R} \}$ has the following properties:
	\begin{enumerate}
		\item For $\alpha < \beta$, $c^-(v, \alpha) \le c^-(v, \beta )$ and $c^+(v, \alpha) \ge c^+(v, \beta )$.
		\item  For $\alpha \le -\Vert v \Vert$, $c^-(v, \alpha) = 0$ and for $\alpha \ge \Vert v \Vert$,  $c^+(v, \alpha) = 0$.
		\item For $\alpha > \Vert v \Vert$, $c^-(v, \alpha) = e$ and for $\alpha < -\Vert v \Vert$,  $c^+(v, \alpha)
		= e$.
		\item $v \in AC(c^-(v, \alpha), c^+(v, \alpha); \alpha \in \mathbb{R})$.
		\item $C_{\alpha}^-(v) \le \alpha c^-(v, \alpha)$ and $C_{\alpha}^{-\prime}(v) \ge \alpha c^-(v, 
		\alpha)^{\prime}$; $C_{\alpha}^+(v) \ge \alpha c^+(v, \alpha)$ and $C_{\alpha}^{+\prime}(v) \le \alpha 
		c^+(v, \alpha)^{\prime}$ for each $\alpha \in \mathbb{R}$. Here $C_{\alpha}^{\pm} := C_{c^{\pm}(v, 
			\alpha)}$.
	\end{enumerate}
\end{proposition}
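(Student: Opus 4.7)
The plan is to produce, for each $\alpha \in \mathbb{R}$, one spectral decomposition of $v$ into three mutually orthogonal components and to extract all five assertions from it.

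Fix $\alpha$ and set $w^\pm_\alpha := (v - \alpha e)^\pm$, so $w^+_\alpha \perp w^-_\alpha$. Proposition \ref{6} promotes $\perp$ to $\perp_\infty^a$ on $V^+$, and the covering hypothesis then yields $c^+(v,\alpha) \perp_\infty^a c^-(v,\alpha)$; by Proposition \ref{26}, $p_\alpha := c^+(v,\alpha) + c^-(v,\alpha) \in OP(V)$ and $r_\alpha := e - p_\alpha \in OP(V)$, so $\{c^+(v,\alpha), c^-(v,\alpha), r_\alpha\}$ is a pairwise $\perp_\infty^a$-orthogonal triple with sum $e$. Two applications of Proposition \ref{21}, first to $p_\alpha$ and $r_\alpha$ and then to $c^+(v,\alpha)$ and $c^-(v,\alpha)$ within $V_{p_\alpha}$, show that every element of $V_{c^+(v,\alpha)} + V_{c^-(v,\alpha)} + V_{r_\alpha}$ admits a componentwise $\pm$-decomposition. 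Substituting $e = c^+(v,\alpha) + c^-(v,\alpha) + r_\alpha$ into $v = \alpha e + w^+_\alpha - w^-_\alpha$ yields the spectral identity
$$v = \bigl(\alpha c^+(v,\alpha) + w^+_\alpha\bigr) + \bigl(\alpha c^-(v,\alpha) - w^-_\alpha\bigr) + \alpha r_\alpha, \qquad (\ast)$$
whose three summands lie in $V_{c^+(v,\alpha)}$, $V_{c^-(v,\alpha)}$, $V_{r_\alpha}$ respectively.

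Statements (2)--(5) are then quick consequences. For (2): if $\alpha \le -\|v\|$ then $v - \alpha e \ge 0$, so $w^-_\alpha = 0$ and $c^-(v,\alpha) = 0$; the case for $c^+$ is dual. For (3): if $\alpha > \|v\|$ then $\alpha e - v \ge (\alpha - \|v\|)e > 0$ is an order unit, and any $p \in OP(V)$ with $\alpha e - v \in V_p$ forces $e \in V_p$, which by the absolute order unit property of $p$ (Proposition \ref{19}) gives $p = e$; hence $c^-(v,\alpha) = e$, and dually for $c^+$. For (4): regrouping the first and third summands of $(\ast)$ together with $V_{c^+(v,\alpha)} + V_{r_\alpha} \subseteq V_{c^+(v,\alpha) + r_\alpha} = V_{c^-(v,\alpha)^\prime}$ (from Proposition \ref{21} applied to the orthogonal pair $c^+(v,\alpha), r_\alpha$) places $v \in AC(c^-(v,\alpha))$, and the symmetric regrouping handles $AC(c^+(v,\alpha))$; since this holds for every $\alpha$, $v$ lies in the asserted intersection. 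Uniqueness of the $AC(p)$-decomposition, because $V_p \cap V_{p^\prime} = \{0\}$ by Proposition \ref{21}, combined with $(\ast)$ then identifies
$$C_\alpha^-(v) = \alpha c^-(v,\alpha) - w^-_\alpha, \qquad C_\alpha^{-\prime}(v) = \alpha c^-(v,\alpha)^\prime + w^+_\alpha,$$
and analogously for $C_\alpha^+$; the positivity of $w^\pm_\alpha$ yields every inequality in (5).

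For the monotonicity (1), I reapply the tri-decomposition but at the ``opposite'' parameter. Given $\alpha < \beta$, writing $v - \alpha e = (v - \beta e) + (\beta - \alpha) e$ and expanding $e = c^+(v,\beta) + c^-(v,\beta) + r_\beta$ gives
$$v - \alpha e = \bigl(w^+_\beta + (\beta - \alpha) c^+(v,\beta)\bigr) + \bigl((\beta - \alpha) c^-(v,\beta) - w^-_\beta\bigr) + (\beta - \alpha) r_\beta,$$
whose first and third summands are $\ge 0$. Componentwise $\pm$-decomposition forces $(v-\alpha e)^- = [(\beta-\alpha) c^-(v,\beta) - w^-_\beta]^- \in V_{c^-(v,\beta)}^+$, whence minimality of the cover yields $c^-(v,\alpha) \le c^-(v,\beta)$. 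Symmetrically, decomposing $v - \beta e$ around $\alpha$ renders the second and third summands $\le 0$, forcing $(v-\beta e)^+ \in V_{c^+(v,\alpha)}^+$ and hence $c^+(v,\beta) \le c^+(v,\alpha)$. The principal obstacle I anticipate is the iterated use of Proposition \ref{21} in the setup: one must carefully verify that $V_{c^+(v,\alpha)} + V_{c^-(v,\alpha)} \subseteq V_{p_\alpha}$, via the absolute order unit property of $p_\alpha$ together with the bound $|u_1+u_2| = |u_1|+|u_2| \le \|u_1+u_2\|\,p_\alpha$ delivered by Proposition \ref{21}, so that the componentwise $\perp$-decomposition extends consistently across all three summands; once this stepping-up is in place, every statement reduces to elementary sign-bookkeeping on $(\ast)$.
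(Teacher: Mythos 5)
Your proposal is correct and follows essentially the same route as the paper: decompose $v-\alpha e$ along the mutually $\perp_{\infty}^a$-orthogonal covers $c^{\pm}(v,\alpha)$ (orthogonal by the covering hypothesis plus $(O.\perp_\infty.1)$), use Proposition \ref{21} to split positive/negative parts componentwise, and finish with cover minimality and the absolute order unit property of order projections. The only difference is cosmetic bookkeeping: you work uniformly with the tri-decomposition $e = c^{+}(v,\alpha)+c^{-}(v,\alpha)+r_{\alpha}$, whereas the paper uses a two-term split together with the cover $c(v)$ and the reduction $c^{+}(v,\alpha)=c^{-}(-v,-\alpha)$.
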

\begin{proof}
	Since $c^+(v, \alpha) = c^-(-v, -\alpha)$, we need to prove results only related to $c^-(v, \alpha)$.
	
	(1). Let $\alpha < \beta$ and set $\lambda = \beta - \alpha > 0$. Put $v_{\beta} = v - \beta e$ so 
	that 
	$c^-(v, \beta)$ is the cover of $v_{\beta}^-$. As $v_{\beta}^- \perp_{\infty}^a v_{\beta}^+$, we 
	see that $v_{\beta}^+ \perp_{\infty}^a c^-(v, \beta)$. Thus
	\begin{eqnarray*}
		(v - \alpha e)^- &=& (v_{\beta} + \lambda e)^-\\
		&=& (v_{\beta}^+ - v_{\beta}^- + \lambda c^-(v, \beta) + \lambda c^-(v, \beta)^{\prime} )^-\\
		&=& (v_{\beta}^+ + \lambda c^-(v, \beta)^{\prime} )^- + (- v_{\beta}^- + \lambda c^-(v, \beta))^-\\
		&=& (- v_{\beta}^- + \lambda c^-(v, \beta))^- \in V_{c^-(v, \beta)}^+.
	\end{eqnarray*}
	Therefore, $c^-(v, \alpha) \le c^-(v, \beta)$, if $\alpha < \beta$. 
	
	(2). Let $\alpha \le -\Vert v \Vert$. Then $v - \alpha e  \in V^+$ so that $(v - \alpha e)^- = 0$. Thus 
	$c^-(v, \alpha) = 0$.
	
	(3). Let $\alpha > \Vert v \Vert$. Set $k = (\alpha - \Vert v \Vert ) > 0$. Then 
	$$(v - \alpha e)^- = \alpha e - v  = (\Vert v \Vert e - v) + k e \ge k e.$$
	As $k > 0$, we conclude that $e \in V_{c^-(v, \alpha)}^+$. Thus $e \le c^-(v, \alpha)$ and consequently, 
	$c^-(v, \alpha) = e$.
	
	(4). Fix $\alpha \in \mathbb{R}$. Then
	$$v - \alpha e = (v - \alpha c(v))^+ - (v - \alpha c(v))^- - \alpha c(v)^{\prime} \in V_{\bar{u}_{\alpha}^+} 
	+ V_{\bar{u}_{\alpha}^-}  + V_{c(v)^{\prime}}.$$
	If $\alpha > 0$, then $c^-(v, \alpha) = \bar{u}_{\alpha}^- + c(v)^{\prime}$ and $c^+(v, \alpha) = \bar{u}_{\alpha}^+$. As $\bar{u}_{\alpha}^-, \bar{u}_{\alpha}^+$ and $c(v)^{\prime}$ are mutually absolutely 
	$\infty$-orthogonal with $\bar{u}_{\alpha}^- + \bar{u}_{\alpha}^+ + c(v)^{\prime} \le e$, we get that 
	$\bar{u}_{\alpha}^+ \le c^-(v, \alpha)^{\prime}$. Thus $V_{\bar{u}_{\alpha}^+} + V_{\bar{u}_{\alpha}^-}  
	+ V_{c(v)^{\prime}} \subset V_{c^-(v, \alpha)} + V_{c^-(v, \alpha)^{\prime}}$ whence $v - \alpha e \in 
	AC(c^-(v, \alpha))$. As $e \in AC(c^-(v, \alpha))$ and as the later is a subspace of $V$, we further 
	conclude that $v \in AC(c^-(v, \alpha))$. The other cases may be proved in a similar way.
	
	(5). Let $\alpha \in \mathbb{R}$. Then $C_{\alpha}^-(v - \alpha e) = - (v - \alpha e)^-$ and 
	$C_{\alpha}^-(e) = c^-(v, \alpha)$. Thus $C_{\alpha}^-(v) = - (v - \alpha e)^- + \alpha c^-(v, \alpha) 
	\le \alpha c^-(v, \alpha)$. We can prove the other statements In a similar manner.
\end{proof}
\begin{theorem}\label{50}
	Let $v \in AC(p)$ for some $p \in OP(V)$. Then for any $\alpha \in \mathbb{R}$, both $c^+(v, 
	\alpha)$ and $c^-(v, \alpha)$ are absolutely compatible with $p$.
\end{theorem}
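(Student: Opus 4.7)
The plan is to exploit the decomposition $v \in AC(p) = V_p + V_{p^{\prime}}$ to split $(v-\alpha e)^{\pm}$ cleanly across the $\perp_{\infty}^a$-orthogonal pair $(p, p^{\prime})$, identify $c^{\pm}(v,\alpha)$ as a sum of two covers (one in $V_p$, one in $V_{p^{\prime}}$), and then invoke Corollary \ref{24} to convert membership in $AC(p)$ into absolute compatibility with $p$.

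First I would write $v = v_p + v_{p^{\prime}}$ with $v_p \in V_p$ and $v_{p^{\prime}} \in V_{p^{\prime}}$, so that $v - \alpha e = (v_p - \alpha p) + (v_{p^{\prime}} - \alpha p^{\prime})$ is the corresponding decomposition in $AC(p)$. Proposition \ref{21} then yields
\[
(v - \alpha e)^{\pm} = (v_p - \alpha p)^{\pm} + (v_{p^{\prime}} - \alpha p^{\prime})^{\pm},
\]
with the two summands lying in $V_p^+$ and $V_{p^{\prime}}^+$ respectively and hence automatically $\perp_{\infty}^a$-orthogonal. Setting $a := (v_p - \alpha p)^+ \in V_p^+$ and $b := (v_{p^{\prime}} - \alpha p^{\prime})^+ \in V_{p^{\prime}}^+$, the inequality $a \le \Vert a \Vert p$ exhibits $p$ itself as a cover candidate for $a$, so the cover $c(a)$ in $OP(V)$ satisfies $c(a) \le p$; symmetrically $c(b) \le p^{\prime}$. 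The covering hypothesis then forces $c(a) \perp_{\infty}^a c(b)$, so by Proposition \ref{26} the sum $c(a) + c(b)$ lies in $OP(V)$.

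The heart of the argument is the identification $c^+(v, \alpha) = c(a) + c(b)$. The forward inequality $c^+(v,\alpha) \le c(a) + c(b)$ is immediate from $V_{c(a)} + V_{c(b)} \subset V_{c(a) + c(b)}$ applied to $a + b = (v-\alpha e)^+$. The reverse inequality is the delicate one: from $a, b \le (v-\alpha e)^+ \in V_{c^+(v,\alpha)}$ we get $c(a), c(b) \le c^+(v,\alpha)$, but to conclude $c(a) + c(b) \le c^+(v,\alpha)$ I would invoke the lattice structure on $OP(V)$ available under the covering hypothesis, noting that $\sup_{OP(V)}\{c(a), c(b)\}$ equals the cover of $c(a) + c(b)$, which is $c(a)+c(b)$ itself since that element is already an order projection. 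This is the step I expect to be the main obstacle: two mutually $\perp$-orthogonal order projections both dominated by a third need not sum to something dominated by the third in an arbitrary absolute order unit space, so the covering hypothesis is genuinely needed here.

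Once the identification is in hand, $c^+(v, \alpha) = c(a) + c(b) \in V_p + V_{p^{\prime}} = AC(p)$, and since $c^+(v, \alpha) \in [0, e]$, Corollary \ref{24} delivers absolute compatibility with $p$. For $c^-(v, \alpha)$ I would either repeat the argument with the negative parts of the same decomposition, or observe that $-v \in AC(p)$ and apply the identity $c^-(v, \alpha) = c^+(-v, -\alpha)$ already noted in the setup of the section.
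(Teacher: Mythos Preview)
Your proposal is correct and follows essentially the same route as the paper: decompose $v-\alpha e$ along the $\perp_{\infty}^a$-orthogonal pair $(p,p')$ via Proposition~\ref{21}, recognize $c^+(v,\alpha)$ as the sum of the covers of the two pieces, and read off membership in $AC(p)$ (whence absolute compatibility by Corollary~\ref{24}). The only organizational difference is that the paper first reduces to $v\in V^+$ and $\alpha\ge 0$ (so that $(C_p(v)-\alpha e)^+=(C_p(v)-\alpha p)^+$) and then bootstraps to general $v,\alpha$ by translation, whereas you work directly with $(v_p-\alpha p)^+$ and $(v_{p'}-\alpha p')^+$, which is slightly cleaner; your explicit handling of the ``delicate'' inequality $c(a)+c(b)\le c^+(v,\alpha)$ via the lattice structure is also more careful than the paper's somewhat terse assertion of the corresponding equality.
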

\begin{proof}
	First, let $v \in V^+$ and $\alpha \ge 0$. As $v \in AC^+(p)$, we have $v = C_p(v) + C_p^{\prime}(v)$ 
	with $C_p(v) \in V_p^+$ and $C_p^{\prime} \in V_p^{\prime}$. Thus 
	\begin{eqnarray*}
		(v - \alpha e)^+ &=& ( (C_p(v) - \alpha p) + (C_p^{\prime}(v) - \alpha p^{\prime}) )^+\\ 
		&=& (C_p(v) - \alpha p)^+ + (C_p^{\prime}(v) - \alpha p^{\prime})^+.
	\end{eqnarray*}
	Also
	\begin{eqnarray*}
		(C_p(v) - \alpha e)^+ &=& (C_p(v) - \alpha p - \alpha p^{\prime})^+\\ 
		&=& (C_p(v) - \alpha p)^+ + ( - \alpha p^{\prime})^+\\
		&=& (C_p(v) - \alpha p)^+
	\end{eqnarray*}
	and
	\begin{eqnarray*}
		(C_p^{\prime}(v) - \alpha e)^+ &=& (C_p(v) - \alpha p - \alpha p^{\prime})^+\\ 
		&=& (C_p^{\prime}(v) - \alpha p^{\prime})^+ + ( - \alpha p)^+\\
		&=& (C_p^{\prime}(v) - \alpha p^{\prime})^+.
	\end{eqnarray*}
	Thus $(C_p(v) - \alpha e)^+ \perp_{\infty}^a (C_p^{\prime}(v) - \alpha e)^+$ so that
	$c^+(C_p(v), \alpha) \perp_{\infty}^a c^+(C_p^{\prime}(v), \alpha)$ with $c^+(C_p(v), \alpha) + 
	c^+(C_p^{\prime}(v), \alpha) = c^+(v, \alpha)$. Since $c^+(C_p(v), \alpha) \in V_p^+$ and  
	$c^+(C_p^{\prime}(v), \alpha) \in V_{p^{\prime}}^+$, we further see that $c^+(C_p(v), \alpha) 
	\le c^+(v, \alpha) \wedge p$ and that  $c^+(C_p^{\prime}(v), \alpha) \le c^+(v, \alpha) \wedge p^{\prime}$. Thus
	\begin{eqnarray*}
		c^+(v, \alpha) &=& c^+(C_p(v), \alpha) + c^+(C_p^{\prime}(v), \alpha)\\
		&\le& c^+(v, \alpha) \wedge p + c^+(v, \alpha) \wedge p^{\prime}\\
		&\le& c^+(v, \alpha).
	\end{eqnarray*}
	Now, it follows that $c^+(v, \alpha) \in AC^+(p)$ and that $c^+(C_p(v), \alpha) = c^+(v, \alpha) \wedge p$ 
	and $c^+(C_p^{\prime}(v), \alpha) = c^+(v, \alpha) \wedge p^{\prime}$. Now, as $c^+(v, \alpha) = e$ 
	whenever $v \in V^+$ and $\alpha < 0$, we get that $c^+(v, \alpha) \in AC^+(p)$ for any 
	$v \in AC^+(p)$ and $\alpha \in \mathbb{R}$. Now, since $v - \alpha e = (v + \Vert v \Vert e) 
	- (\alpha + \Vert v \Vert) e$ and $(v + \Vert v \Vert e) \in AC^+(p)$ whenever $v \in AC(p)$, we further 
	conclude that $c^+(v, \alpha) \in AC^+(p)$ whenever $v \in AC(p)$ and $\alpha \in \mathbb{R}$. Finally, as 
	$c^-(v, \alpha) = c^+( - v, - \alpha)$, the result holds.
\end{proof}
\begin{proposition}\label{51}
	Assume that 
	$v \in AC(p)$ for some $p \in OP(V)$ and that $\alpha \in \mathbb{R}$. Then $C_p(v) \le \alpha p$ and 
	$C_p^{\prime}(v) \ge \alpha p^{\prime}$ if and only if $c^-(v, \alpha) \le p \le c^+(v, \alpha)^{\prime}$. 
	Also in this case, $C_p(v - \alpha e) = - (v - \alpha e)^-$ and $C_p^{\prime}(v - \alpha e) = (v - \alpha 
	e)^+$.  
\end{proposition}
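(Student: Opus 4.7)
The plan is to exploit the direct-sum decomposition $AC(p) = V_p + V_{p^{\prime}}$ provided by Proposition~\ref{21}, which guarantees $\vert x + y \vert = \vert x \vert + \vert y \vert$ whenever $x \in V_p$ and $y \in V_{p^{\prime}}$. From this additivity it follows that $C_p$ is a linear projection on $AC(p)$ and that $C_p(e) = p$, $C_p^{\prime}(e) = p^{\prime}$ by uniqueness of the $p$-decomposition of $e = p + p^{\prime}$. Writing $v = C_p(v) + C_p^{\prime}(v)$, I would then recast
\[
v - \alpha e = (C_p(v) - \alpha p) + (C_p^{\prime}(v) - \alpha p^{\prime})
\]
as an element of $V_p + V_{p^{\prime}}$, and apply the splitting of the modulus to both sides to obtain
\[
(v - \alpha e)^{\pm} = (C_p(v) - \alpha p)^{\pm} + (C_p^{\prime}(v) - \alpha p^{\prime})^{\pm}.
\]

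For the forward direction, the hypothesis $C_p(v) \le \alpha p$ and $C_p^{\prime}(v) \ge \alpha p^{\prime}$ kills two of the four summands and leaves $(v - \alpha e)^- = \alpha p - C_p(v) \in V_p^+$ and $(v - \alpha e)^+ = C_p^{\prime}(v) - \alpha p^{\prime} \in V_{p^{\prime}}^+$. Since $c^-(v, \alpha)$ is by definition the least order projection whose ``ideal'' contains $(v - \alpha e)^-$, this forces $c^-(v, \alpha) \le p$, and similarly $c^+(v, \alpha) \le p^{\prime}$, i.e., $p \le c^+(v, \alpha)^{\prime}$. For the converse, I would use the definition of cover together with the monotonicity $q \le r \Rightarrow V_q \subset V_r$ (immediate from $V_q = \{\, x : k q \pm x \in V^+ \text{ for some } k > 0\,\}$) to conclude $(v - \alpha e)^- \in V_{c^-(v, \alpha)} \subset V_p$ and $(v - \alpha e)^+ \in V_{c^+(v, \alpha)} \subset V_{p^{\prime}}$. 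Then $v - \alpha e = -(v - \alpha e)^- + (v - \alpha e)^+$ is the unique $p$-decomposition of $v - \alpha e$, so $C_p(v - \alpha e) = -(v - \alpha e)^- \le 0$ and $C_p^{\prime}(v - \alpha e) = (v - \alpha e)^+ \ge 0$, which by linearity rearranges to $C_p(v) \le \alpha p$ and $C_p^{\prime}(v) \ge \alpha p^{\prime}$. The ``also in this case'' formulas for $C_p(v - \alpha e)$ and $C_p^{\prime}(v - \alpha e)$ fall out of the same uniqueness step.

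The main subtlety is ensuring cleanly that $C_p$ is linear on $AC(p)$ and that $C_p(e) = p$, so that the identity $C_p(v - \alpha e) = C_p(v) - \alpha p$ is legitimate; this is precisely the content of Proposition~\ref{21} combined with uniqueness of the $p$-decomposition. Once that is in hand, the whole proof collapses to bookkeeping in the absolute $\infty$-direct sum $V_p \oplus_{\infty} V_{p^{\prime}}$, together with the defining minimality of the covers $c^{\pm}(v, \alpha)$.
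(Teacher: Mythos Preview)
Your argument is correct and, in the forward direction, coincides with the paper's: both write $v-\alpha e=(C_p(v)-\alpha p)+(C_p^{\prime}(v)-\alpha p^{\prime})$ and use the orthogonal splitting from Proposition~\ref{21} to identify $(v-\alpha e)^-\in V_p^+$ and $(v-\alpha e)^+\in V_{p^{\prime}}^+$, whence the covers lie below $p$ and $p^{\prime}$ respectively.

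For the converse you take a slightly more direct path than the paper. The paper invokes Proposition~\ref{43} (commutation of $C_p$ with $C_{\alpha}^{+\prime}$ when $p\le c^+(v,\alpha)^{\prime}$) and then positivity of $C_p$ to conclude $C_p(v-\alpha e)=C_pC_{\alpha}^{+\prime}(v-\alpha e)\le 0$. You instead observe that $c^-(v,\alpha)\le p$ gives $(v-\alpha e)^-\in V_{c^-(v,\alpha)}\subset V_p$ and $c^+(v,\alpha)\le p^{\prime}$ gives $(v-\alpha e)^+\in V_{c^+(v,\alpha)}\subset V_{p^{\prime}}$, so $v-\alpha e=-(v-\alpha e)^-+(v-\alpha e)^+$ \emph{is} the $p$-decomposition by uniqueness. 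This bypasses Proposition~\ref{43} entirely and yields the ``also in this case'' formulas immediately rather than only implicitly; it is a modest but genuine simplification.
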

\begin{proof}
	Find $v_1$ and $v_2$ in $V^+$ such that $C_p(v) + v_1 = \alpha p$ and $C_p^{\prime}(v) - v_2 =  
	\alpha p^{\prime}$. Then $v_1 \in V_p^+$ and $v_2 \in V_{p^{\prime}}^+$ so that $v_1 
	\perp_{\infty}^a v_2$. Next, as $v \in AC(p)$, we have 
	$$v = C_p(v) + C_{p^{\prime}}^{\prime}(v) = (\alpha p - v_1) + (\alpha p^{\prime} + v_2) = \alpha e + 
	( v_2 - v_1).$$
	Thus $v - \alpha e = v_2 - v_1$ and consequently, $v_2 = (v - \alpha e)^+$ and $v_1 = (v - \alpha e)^-$  
	for $v_2 \perp_{\infty}^a v_1$. Now it follows that 
	$$(v - \alpha e)^+ = C_p^{\prime}(v) - \alpha p^{\prime} =  C_p^{\prime}(v - \alpha e)$$
	and
	$$(v - \alpha e)^- = - (C_p(v) - \alpha p) = - C_p(v - \alpha e).$$
	Further, we see that $(v - \alpha e)^+ \in V_{p^{\prime}}^+$ and that $(v - \alpha e)^- \in V_p^+$. Thus 
	$c^+(v, \alpha) \le p^{\prime}$ and $c^-(v, \alpha)^- \le p$ so that $c^-(v, \alpha) \le p \le c^+(v, 
	\alpha)^{\prime}$.
	
	Conversely, assume that $c^-(v, \alpha) \le p \le c^+(v, \alpha)^{\prime}$. Then by Proposition \ref{43}, 
	$C_{\alpha}^{+{\prime}} C_p = C_p = C_p C_{\alpha}^{+{\prime}}$ and $C_{\alpha}^{-{\prime}} 
	C_p^{\prime} = C_p^{\prime} = C_p^{\prime} C_{\alpha}^{-{\prime}}$. Thus 
	$$C_p(v - \alpha e) = C_p C_{\alpha}^{+{\prime}}(v - \alpha e) \le 0$$
	so that $C_p(v) \le \alpha p$ and 
	$$C_p^{\prime}(v - \alpha e) = C_p^{\prime} C_{\alpha}^{+{\prime}}(v - \alpha e) \ge 0$$
	so that $C_p^{\prime}(v) \ge \alpha p^{\prime}$.
\end{proof}
\begin{proposition}\label{52}
	Let $(V, e)$ be a monotone complete absolute order unit space in which $OP(V)$ covers $V$ and let 
	$v \in V$. Set $e_{\alpha} = c^+(v, \alpha )^{\prime}$ for each $\alpha \in \mathbb{R}$. Then $\wedge_{\alpha > 
		\alpha_0} e_{\alpha} = e_{\alpha_0}$ for each $\alpha_0 \in \mathbb{R}$.
\end{proposition}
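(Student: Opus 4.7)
The plan is to show that $f := \wedge_{\alpha > \alpha_0} e_{\alpha}$ (which exists in $OP(V)$ by monotone completeness together with the lattice structure of $OP(V)$ coming from the covering hypothesis) coincides with $e_{\alpha_0}$. By Proposition \ref{49}(1) the family $\{e_{\alpha}\}_{\alpha > \alpha_0}$ decreases to its infimum, and each $e_{\alpha} \ge e_{\alpha_0}$, so $f \ge e_{\alpha_0}$ is automatic; the real task is the reverse inequality $f \le e_{\alpha_0}$, which amounts to $(v - \alpha_0 e)^+ \in V_{f^{\prime}}$ (so that $c^+(v, \alpha_0) \le f^{\prime}$, i.e., $e_{\alpha_0} = c^+(v, \alpha_0)^{\prime} \ge f$). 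A preliminary shift $v \mapsto v + \Vert v \Vert e$ and $\alpha \mapsto \alpha + \Vert v \Vert$ (under which the $e_\alpha$ translate equivariantly) reduces the argument to the case $v \in V^+$ with $\alpha_0 \ge 0$, the remaining cases being trivial since $v \in V^+$ and $\alpha < 0$ force $e_\alpha = 0$.

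The first main step is to prove $v \in AC(f)^+$. For $\alpha_0 < \alpha_1 < \alpha_2$, Proposition \ref{49}(4) places $v$ in $AC(e_{\alpha_1}, e_{\alpha_2})$, and Proposition \ref{43} yields $C_{e_{\alpha_2}}(v) - C_{e_{\alpha_1}}(v) = C_{e_{\alpha_1}}^{\prime}\bigl(C_{e_{\alpha_2}}(v)\bigr)$; since $v \in V^+$ gives $C_{e_{\alpha_2}}(v) \in AC(e_{\alpha_1})^+$ and $C_{e_{\alpha_1}}^{\prime}$ is positive on positives, the right-hand side is in $V^+$. Hence $\alpha_1 \mapsto C_{e_{\alpha_1}}(v)$ is a net in $V^+$ decreasing as $\alpha_1 \downarrow \alpha_0$ and bounded below by $0$, so monotone completeness supplies $u_0 := \inf_{\alpha_1 > \alpha_0} C_{e_{\alpha_1}}(v) \in V^+$. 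The absolute order unit property of $e_{\alpha_1}$ gives $C_{e_{\alpha_1}}(v) \le \Vert v \Vert e_{\alpha_1}$; passing to infima yields $u_0 \le \Vert v \Vert f$, so $u_0 \in V_f^+$. Dually $w_0 := v - u_0 = \sup_{\alpha_1 > \alpha_0} C_{e_{\alpha_1}}^{\prime}(v) \le \Vert v \Vert f^{\prime}$ lies in $V_{f^{\prime}}^+$, giving $v = u_0 + w_0 \in V_f^+ + V_{f^{\prime}}^+ = AC(f)^+$; by the uniqueness in Proposition \ref{21}, $C_f(v) = u_0$ and $C_f^{\prime}(v) = w_0$.

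Next, Proposition \ref{51} applied to $p = e_{\alpha_1}$ gives $C_{e_{\alpha_1}}(v) \le \alpha_1 e_{\alpha_1}$, and since $\alpha_1 e_{\alpha_1} - \alpha_0 f = \alpha_1 (e_{\alpha_1} - f) + (\alpha_1 - \alpha_0) f \downarrow 0$ as $\alpha_1 \downarrow \alpha_0$ (using $\alpha_0 \ge 0$ and the monotone decrease $e_{\alpha_1} \downarrow f$), the infimum yields $C_f(v) \le \alpha_0 f$. Thus $a := C_f(v) - \alpha_0 f \in V_f$ satisfies $a \le 0$, while $b := C_f^{\prime}(v) - \alpha_0 f^{\prime} \in V_{f^{\prime}}$; Proposition \ref{21} applied to the complementary order projections $f$ and $f^{\prime}$ (whose sum equals $e$) then gives $(v - \alpha_0 e)^+ = (a + b)^+ = a^+ + b^+ = 0 + b^+ \in V_{f^{\prime}}$. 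Hence $c^+(v, \alpha_0) \le f^{\prime}$, so $e_{\alpha_0} \ge f$, and combined with the trivial direction we conclude $f = e_{\alpha_0}$.

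The principal obstacle will be the first step: confirming $v \in AC(f)$ when $f$ is defined only as a lattice infimum, so that the decomposition $v = C_f(v) + C_f^{\prime}(v)$ is not available a priori. This is exactly where the monotone completeness hypothesis is essential, combined with the absolute order unit property of the order projections $f$ and $f^{\prime}$ to force the monotone limits $u_0$ and $w_0$ into $V_f$ and $V_{f^{\prime}}$ respectively; once these limits are pinned down inside their respective ideals, Proposition \ref{21} finishes the proof routinely.
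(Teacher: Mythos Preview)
Your proof is correct and in fact more careful than the paper's in one crucial respect. The paper's argument (after identifying $e_0 = \wedge_{\alpha > \alpha_0} e_\alpha \in OP(V)$ via the cover) immediately writes $C_{e_0}(v) = C_{e_0} C_{e_\alpha}(v) \le C_{e_0}(\alpha e_\alpha) = \alpha e_0$, lets $\alpha \downarrow \alpha_0$, obtains the dual inequality $C_{e_0}^{\prime}(v) \ge \alpha_0 e_0^{\prime}$, and concludes via Proposition~\ref{51}. This tacitly assumes $v \in AC(e_0)$ so that $C_{e_0}(v)$ is defined and Proposition~\ref{43} applies; the paper does not justify this. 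Your first main step supplies exactly that missing ingredient by constructing the monotone limits $u_0$ and $w_0$ and forcing them into $V_f^+$ and $V_{f'}^+$ via the absolute order unit property. Once $v \in AC(f)$ is secured, the two routes merge: both come down to $C_f(v) \le \alpha_0 f$ and to placing $(v - \alpha_0 e)^+$ inside $V_{f'}$, which you do directly via Proposition~\ref{21} while the paper packages the same conclusion as Proposition~\ref{51}.

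Two cosmetic points. First, your appeal to ``the lattice structure of $OP(V)$'' only furnishes \emph{finite} meets; for the infinite meet one really needs the paper's cover trick (the $V$-infimum $v_0$ exists by monotone completeness, its cover $e_0$ satisfies $v_0 \le e_0 \le e_\alpha$ for every $\alpha > \alpha_0$, whence $e_0 = v_0 \in OP(V)$). Second, the assertion $\alpha_1 e_{\alpha_1} - \alpha_0 f \downarrow 0$ deserves one more line: for fixed $\beta > \alpha_0$ one has $\inf_{\alpha_1 \in (\alpha_0, \beta]} \alpha_1 e_{\alpha_1} \le \beta \inf_{\alpha_1 \in (\alpha_0, \beta]} e_{\alpha_1} = \beta f$, and letting $\beta \downarrow \alpha_0$ (using that $V^+$ is norm-closed) gives the bound $\alpha_0 f$. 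With these small additions your argument is complete and self-contained.
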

\begin{proof}
	First note that $\wedge_{\alpha > \alpha_0} e_{\alpha}$ exists in $V$ as the later is monotone complete. We write 
	$\wedge_{\alpha > \alpha_0} e_{\alpha} = v_0$ so that $v_0 \in [0, e]$. Let $e_0 \in OP(V)$ be the cover of $v_0$. 
	Then $v_0 \le e_0$. Now, by the definition of the cover, $e_0 \le e_{\alpha}$ for each $\alpha > \alpha_0$ as 
	$v_0 \le e_{\alpha}$ for such $\alpha$. Thus $e_0 \le v_0$ and we have $e_0 = \wedge_{\alpha > \alpha_0} e_{\alpha} \in OP(V)$. 
	Also, then $e_{\alpha_0} \le e_0 \le e_{\alpha}$ if $\alpha > \alpha_0$. Fix $\alpha > \alpha_0$. 
	Then $C_{e_0} C_{e_{\alpha}} = C_{e_{\alpha}} C_{e_0} = C_{e_0}$ so that 
	$$C_{e_0}(v) = C_{e_0} C_{e_{\alpha}}(v) \le C_{e_0}( \alpha e_{\alpha}) = \alpha e_0.$$
	Thus $C_{e_0}(v) \le \alpha_0 e_0$. Similarly, as $e_{\alpha_0} \le e_0$, we have $e_0^{\prime} \le e_{\alpha_0}^{\prime}$. 
	Thus as above, we may get that $C_{e_0}^{\prime}(v) \ge \alpha_0 e_0^{\prime}$. Now, applying Proposition \ref{51}, 
	we can conclude that $e_0 = c^+(v, \alpha_0)^{\prime} := e_{\alpha_0}$.
\end{proof}
\begin{theorem}[Spectral Resolution]\label{53}
	Let $(V, e)$ be a monotone complete absolute order unit space in which $OP(V)$ covers $V$ and let 
	$v \in V$. Then there exists a unique family $\{ e_{\alpha}: \alpha \in \mathbb{R} \} \subset OP(V)$ such that
	\begin{enumerate}
		\item $\{ e_{\alpha}: \alpha \in \mathbb{R} \}$ is increasing;
		\item $e_{\alpha} = 0$ if $\alpha < - \Vert v \Vert$ and $e_{\alpha} = e$ if $\alpha \ge \Vert v \Vert$;
		\item $v \in OC(e_{\alpha}; \alpha \in \mathbb{R})$;
		\item $C_{e_{\alpha}}(v) \le \alpha e_{\alpha}$ and $C_{e_{\alpha}}^{\prime}(v) \ge \alpha e_{\alpha}^{\prime}$ 
		for each $\alpha \in \mathbb{R}$;
		\item If $v \in AC(p)$ for some $p \in OP(V)$ and if $C_p(v) \le \alpha p$ and $C_p^{\prime}(v) \ge \alpha p^{\prime}$, 
		then $p \le e_{\alpha}$; 
		\item $\wedge_{\alpha > \alpha_0} e_{\alpha} = e_{\alpha_0}$ for each $\alpha_0 \in \mathbb{R}$.
	\end{enumerate}
\end{theorem}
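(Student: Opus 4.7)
My plan is to take $e_{\alpha} := c^+(v,\alpha)^{\prime}$ as the candidate spectral family and verify the six properties by assembling the results already proved in Propositions \ref{49}, \ref{50}, \ref{51}, and \ref{52}. Since $c^+(v,\alpha)$ is defined as the cover in $OP(V)$ of $(v-\alpha e)^+$, its complement $e_{\alpha}$ automatically lies in $OP(V)$. The choice between $c^-(v,\alpha)$ and $c^+(v,\alpha)^{\prime}$ (both of which relate $v$ to $\alpha$ from below) is forced by the right-continuity condition (6), which matches Proposition \ref{52}, and by the maximality in (5), which matches the conclusion of Proposition \ref{51}.

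Properties (1), (2), (3) are immediate from Proposition \ref{49}: monotonicity of $\alpha \mapsto c^+(v,\alpha)$ (part (1) of that proposition) gives the reverse monotonicity of $e_{\alpha}$; the boundary behaviour of $c^+(v,\alpha)$ from parts (2) and (3) yields $e_{\alpha} = 0$ for $\alpha < -\Vert v\Vert$ and $e_{\alpha} = e$ for $\alpha \ge \Vert v\Vert$; and part (4) gives $v \in AC(c^+(v,\alpha)) = AC(e_{\alpha})$. For property (4) of the theorem, I would use Proposition \ref{49}(5): identifying $C_{e_{\alpha}} = C_{c^+(v,\alpha)^{\prime}}$ with $C_{c^+(v,\alpha)}^{\prime}$, the inequality $C_{\alpha}^{+\prime}(v) \le \alpha\, c^+(v,\alpha)^{\prime}$ becomes $C_{e_{\alpha}}(v) \le \alpha e_{\alpha}$, and dually $C_{e_{\alpha}}^{\prime}(v) = C_{\alpha}^+(v) \ge \alpha c^+(v,\alpha) = \alpha e_{\alpha}^{\prime}$. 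Property (5) is precisely Proposition \ref{51}: the hypotheses on $p$ force $c^-(v,\alpha) \le p \le c^+(v,\alpha)^{\prime} = e_{\alpha}$. Property (6) is precisely Proposition \ref{52} in the present notation.

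For uniqueness, suppose $\{f_{\alpha}\}_{\alpha\in\mathbb{R}} \subset OP(V)$ is another family satisfying (1)--(6). Applying property (5) of our constructed family to $p = f_{\alpha}$ (whose hypotheses are supplied by properties (3) and (4) for $\{f_{\alpha}\}$) gives $f_{\alpha} \le e_{\alpha}$. Interchanging the roles of the two families in (5) gives the reverse inequality $e_{\alpha} \le f_{\alpha}$. Hence $f_{\alpha} = e_{\alpha}$ for every $\alpha$.

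The proof is essentially assembly rather than discovery, so the only real obstacle is bookkeeping with the primes: one must carefully match $c^{\pm}(v,\alpha)$, their complements, and the projections $C_p, C_p^{\prime}$ in the correct direction. I would guard against this by using the order-unit inequality $C_{e_{\alpha}}(v) \le \alpha e_{\alpha}$ as the orienting constraint, which dictates the choice $e_{\alpha} = c^+(v,\alpha)^{\prime}$ rather than $c^-(v,\alpha)$. Once that choice is fixed, every clause of the theorem is a one-line citation of an earlier result, and uniqueness reduces to a symmetric application of the maximality clause.
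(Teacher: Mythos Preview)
Your proposal is correct and follows essentially the same route as the paper: set $e_{\alpha}=c^{+}(v,\alpha)^{\prime}$, read off (1)--(3) from Proposition~\ref{49}, obtain (4) and (5) from Proposition~\ref{51} (you get (4) instead from Proposition~\ref{49}(5), which is an equivalent one-line citation), and (6) from Proposition~\ref{52}. Your uniqueness argument, applying clause (5) of each family to the other, is the symmetric rephrasing of the paper's argument, which applies Proposition~\ref{51} to the hypothetical family to get $f_{\alpha}\le c^{+}(v,\alpha)^{\prime}$ from (4) and then uses clause (5) of the hypothetical family with $p=c^{+}(v,\alpha)^{\prime}$ for the reverse inequality; the content is identical.
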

\begin{proof}
	Set $e_{\alpha} = c^+(v, \alpha )^{\prime}$ for each $\alpha \in \mathbb{R}$. Then (1), (2) and (3) follow from 
	Proposition \ref{49}; (4) and (5) follow from Proposition \ref{51} and (6) follows from Proposition \ref{52}. 
	Conversely, assume that a family $\{ e_{\alpha}: \alpha \in \mathbb{R} \} \subset OP(V)$ satisfies conditions (1) -- (6). Then 
	by Proposition \ref{51}, condition (4) yields $c^-(v, \alpha ) \le e_{\alpha} \le c^+(v, \alpha )^{\prime}$ and condition (5) yields 
	$c^+(v, \alpha )^{\prime}  \le e_{\alpha}$ for each $\alpha \in \mathbb{R}$. This completes the proof.
\end{proof}
\begin{definition}
	Let $(V, e)$ be a monotone complete absolute order unit space in which $OP(V)$ covers $V$ and let $v \in V$.
	Then an increasing family $\{ e_{\alpha}: \alpha \in \mathbb{R} \} \subset OP(V)$ is called the spectral resolution of $v$ in 
	$OP(V)$ if the family satisfies conditions (1) -- (6) of Theorem \ref{53}.
\end{definition}
\begin{theorem}[Spectral Decomposition]\label{54}
	Let $(V, e)$ be a monotone complete absolute order unit space in which $OP(V)$ covers $V$ and let $v \in V$.
	Consider the spectral resolution $\{ e_{\alpha} : \alpha \in \mathbb{R} \}$ of $v$ in $OP(V)$. Then for any $\epsilon > 0$ 
	and a finite increasing sequence $\alpha_0  < \dots < \alpha_n$ with $\alpha_0 < - \Vert v \Vert$,$\alpha_n > \Vert v \Vert$ and 
	$\max \{ \alpha_i - \alpha_{i-1} : 1 \le i \le n \} < \epsilon$, we have 
	$$\big\Vert v - \sum_{i=1}^n \alpha_i (e_{\alpha_i} - e_{\alpha_{i-1}}) \big\Vert < \epsilon .$$
\end{theorem}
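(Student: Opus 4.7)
The plan is to discretize $v$ using the partition of $e$ induced by the spectral family. Setting $q_i := e_{\alpha_i} - e_{\alpha_{i-1}}$ for $i = 1, \dots, n$, Corollary \ref{28} gives $q_i \in OP(V)$, and the chain $e_{\alpha_0} \le \cdots \le e_{\alpha_n}$ combined with Proposition \ref{26} ensures that the $q_i$ are pairwise absolutely $\infty$-orthogonal with $\sum_{i=1}^n q_i = e_{\alpha_n} - e_{\alpha_0} = e$, the boundary values following from the hypothesis $\alpha_0 < -\Vert v \Vert$, $\alpha_n > \Vert v \Vert$ together with condition (2) of Theorem \ref{53}. Since condition (3) places $v$ in $AC(e_{\alpha_i})$ for each $i$, an inductive application of Corollary \ref{45} puts $v$ in $AC(q_i)$ for every $i$; iterating Proposition \ref{21} as in Remark \ref{47}(2) then yields the direct-sum decomposition $v = \sum_{i=1}^n v_i$ with $v_i := C_{q_i}(v) \in V_{q_i}$, the summands being pairwise absolutely $\infty$-orthogonal.

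The hard step will be establishing the two-sided sandwich $\alpha_{i-1} q_i \le v_i \le \alpha_i q_i$. Since $q_i \le e_{\alpha_i}$ forces $V_{e_{\alpha_i}^{\prime}} \subset V_{q_i^{\prime}}$, we have $C_{q_i} \circ C_{e_{\alpha_i}}^{\prime} = 0$, hence $v_i = C_{q_i}(C_{e_{\alpha_i}}(v))$. Theorem \ref{53}(4) provides $C_{e_{\alpha_i}}(v) \le \alpha_i e_{\alpha_i}$; since $C_{q_i}$ is positive on $AC^+(q_i)$ and satisfies $C_{q_i}(e_{\alpha_i}) = q_i$ (using $e_{\alpha_i} = q_i + e_{\alpha_{i-1}}$ with $e_{\alpha_{i-1}} \in V_{q_i^{\prime}}$), the upper bound drops out. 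A dual computation based on $q_i \le e_{\alpha_{i-1}}^{\prime}$ and the lower bound $C_{e_{\alpha_{i-1}}}^{\prime}(v) \ge \alpha_{i-1} e_{\alpha_{i-1}}^{\prime}$ delivers $v_i \ge \alpha_{i-1} q_i$. The delicate bookkeeping here, tracking the commutation relations among the various $C_p$'s supplied by Proposition \ref{43} and the order-unit ideal structure of each $V_p$, is where the argument is technically most demanding.

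With the sandwich in hand, $v_i - \alpha_i q_i \in V_{q_i}$ satisfies $\vert v_i - \alpha_i q_i \vert \le (\alpha_i - \alpha_{i-1}) q_i$. Applying the iterated form of Proposition \ref{21} to the $\perp_{\infty}^a$-orthogonal summands yields
\[
\Bigl\vert v - \sum_{i=1}^n \alpha_i q_i \Bigr\vert \;=\; \sum_{i=1}^n \vert v_i - \alpha_i q_i \vert \;\le\; \sum_{i=1}^n (\alpha_i - \alpha_{i-1}) q_i \;\le\; \Bigl( \max_{1 \le i \le n}(\alpha_i - \alpha_{i-1}) \Bigr) e.
\]
Because the order unit norm satisfies $\Vert w \Vert = \Vert \vert w \vert \Vert$ on an absolute order unit space (Proposition \ref{6}(D)) and $\vert w \vert \le \lambda e$ forces $\Vert w \Vert \le \lambda$, the hypothesis $\max_i(\alpha_i - \alpha_{i-1}) < \epsilon$ translates directly into $\Vert v - \sum_i \alpha_i q_i \Vert < \epsilon$, completing the proof.
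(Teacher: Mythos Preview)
Your proof is correct and follows essentially the same route as the paper's. Both arguments hinge on the sandwich $\alpha_{i-1}(e_{\alpha_i}-e_{\alpha_{i-1}}) \le C_{e_{\alpha_i}}(v)-C_{e_{\alpha_{i-1}}}(v) \le \alpha_i(e_{\alpha_i}-e_{\alpha_{i-1}})$, obtained by applying the positive projections to the inequalities of Theorem~\ref{53}(4) and invoking the commutation relations of Proposition~\ref{43}; your $C_{q_i}$ is exactly the paper's $C_{e_{\alpha_{i-1}}}' C_{e_{\alpha_i}} = C_{e_{\alpha_i}} - C_{e_{\alpha_{i-1}}}$, and your closing step via $\lvert v-\sum\alpha_i q_i\rvert = \sum\lvert v_i-\alpha_i q_i\rvert$ is a minor repackaging of the paper's telescoped order sandwich followed by the $\perp_\infty^a$-norm computation.
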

\begin{proof}
	Let $\alpha < \beta$. Then $e_{\alpha} \le e_{\beta}$ so that by Proposition \ref{48}(5), we get 
	$$C_{\beta}(v) \le \beta e_{\beta} = \beta C_{\beta}(e) ~ \textrm{and} ~ C_{\alpha}^{\prime}(v) \ge 
	\alpha e_{\alpha}^{\prime} = \alpha C_{\alpha}^{\prime}(e).$$
	As $C_{\alpha}^{\prime}$ and $C_{\beta}$ are positive operators on $OC(e_{\alpha}, e_{\beta})$, we 
	get 
	$$\alpha C_{\beta} C_{\alpha}^{\prime}(e) \le C_{\beta} C_{\alpha}^{\prime}(v) = C_{\alpha}^{\prime} 
	C_{\beta}(v) \le \beta C_{\alpha}^{\prime} C_{\beta}(e).$$
	Since $\alpha < \beta$, by Proposition \ref{43}, we see that
	$$C_{\beta} C_{\alpha}^{\prime}(e) = C_{\alpha}^{\prime} C_{\beta}(e) = C_{\beta}(e) - C_{\alpha}(e) 
	= e_{\beta} - e_{\alpha}$$
	and 
	$$C_{\beta} C_{\alpha}^{\prime}(v) = C_{\alpha}^{\prime} C_{\beta}(v) = C_{\beta}(v) - C_{\alpha}(v)$$
	Thus we obtain
	$$\alpha (e_{\beta} - e_{\alpha}) \le (C_{\beta}(v) - C_{\alpha}(v)) \le \beta (e_{\beta} - e_{\alpha}).$$
	Applying this for $\beta = \alpha_i$ and $\alpha = \alpha_{i-1}$, where $1 \le i \le n$ and adding 
	together, we get  
	$$\sum_{i=1}^n \alpha_{i-1} (e_{\alpha_i} - e_{\alpha_{i-1}}) \le C_{\alpha_n}(v) - C_{\alpha_0}(v) \le 
	\sum_{i=1}^n \alpha_i (e_{\alpha_i} - e_{\alpha_{i-1}}).$$
	Now, $e_{\alpha_n} = e$ and $e_{\alpha_0} = 0$ so that $C_{\alpha_n}(v) = v$ and $C_{\alpha_0}(v) 
	= 0$. Also, $(e_{\alpha_i} - e_{\alpha_{i-1}}) \perp_{\infty}^a (e_{\alpha_j} - e_{\alpha_{j-1}})$  if 
	$i \not= j$ so that 
	\begin{eqnarray*}
		\big\Vert \sum_{i=1}^n \alpha_i (e_{\alpha_i} - e_{\alpha_{i-1}}) &-& \sum_{i=1}^n \alpha_{i-1} 
		(e_{\alpha_i} - e_{\alpha_{i-1}}) \big\Vert \\
		&=& \big\Vert \sum_{i=1}^n (\alpha_i - \alpha_{i-1}) 
		(e_{\alpha_i} - e_{\alpha_{i-1}}) \big\Vert \\
		&=& \max \{ \alpha_i - \alpha_{i-1} : 1 \le i \le n \} < \epsilon .
	\end{eqnarray*} 
	Thus $$\big\Vert v - \sum_{i=1}^n \alpha_i (e_{\alpha_i} - e_{\alpha_{i-1}}) \big\Vert < \epsilon .$$
\end{proof}

\bibliographystyle{amsplain}

\end{document}